\newcolumntype{V}{!{\vrule width 2pt}}
\numberwithin{equation}{section}
\def\blue{\textcolor{blue}}
\def\red{\textcolor{red}}
\def\magenta{\textcolor{magenta}}
\theoremstyle{plain}
\newtheorem{theorem}{Theorem}[section]
\newtheorem{corollary}[theorem]{Corollary}
\newtheorem{proposition}[theorem]{Proposition}
\newtheorem{remark}[theorem]{Remark}
\newtheorem{lemma}[theorem]{Lemma}
\newtheorem{definition}[theorem]{Definition}
\newtheorem{problem}[theorem]{Problem}
\def\S{\mathfrak{S}}
\def\D{\mathcal{D}}
\def\B{\mathcal{B}}
\def\T{\mathcal{T}}
\def\N{\mathbb{N}}
\def\PW{\mathcal{PW}}
\def\OP{\operatorname{op}}
\def\EP{\operatorname{ep}}
\def\ASC{\operatorname{ASC}}
\def\DES{\operatorname{DES}}
\def\LPK{\operatorname{LPK}}
\def\Comp{\operatorname{Comp}}
\def\AR{\mathsf{AR}}
\def\DR{\mathsf{DR}}
\def\odr{\mathsf{odr}}
\def\edr{\mathsf{edr}}
\def\oar{\mathsf{oar}}
\def\ear{\mathsf{ear}}
\def\lpk{\mathsf{lpk}}
\def\mnd{\mathsf{mnd}}
\def\mna{\mathsf{mna}}
\def\mne{\mathsf{mne}}
\def\mnw{\mathsf{mnw}}
\def\M{\mathsf{M}}
\def\Mo{\mathcal{M}}
\def\L{\mathcal{L}}
\def\des{\mathsf{des}}
\def\mk{\mathsf{mark}}
\def\LC{\mathsf{LC}}
\def\RC{\mathsf{RC}}
\def\PT{\mathsf{PT}}
\def\X{\mathcal{X}}
\def\Y{\mathcal{Y}}
\def\sn{\mathrm{sn}}
\newcommand{\der}{{\rm{d}}}
\def\orc{\mathsf{orc}}
\def\olc{\mathsf{olc}}
\def\idr{\mathsf{idr}}
\def\iar{\mathsf{iar}}
\def\a{\alpha}
\def\b{\beta}
\def\larm{\mathsf{larm}}
\def\rarm{\mathsf{rarm}}
\def\st{\mathsf{st}}
\def\H{\mathrm{HOR}}
\def\V{\mathrm{VER}}
\def\Vie{\mathcal{V}}
\def\EXC{\mathrm{EXC}}
\def\WEXC{\mathrm{WEXC}}
\def\Bax{\mathrm{Bax}}
\def\DT{\mathrm{DT}}
\def\DB{\mathrm{DB}}
\def\Comp{\mathrm{Comp}}
\def\EE{\mathcal{EE}}
\def\EO{\mathcal{EO}}
\def\OE{\mathcal{OE}}
\def\OO{\mathcal{OO}}
\def\val{\textbf{val}}
\def\pos{\textbf{pos}}
\def\dia{\diamond}
\def\i{\mathsf{i}}
\def\t{\mathsf{t}}
\def\pk{\mathsf{pk}}
\begin{document}
\title[Parity statistics on restricted permutations]{Parity statistics on restricted permutations and the Catalan--Schett polynomials}

\author[Z. Lin]{Zhicong Lin}
\address[Zhicong Lin]{Research Center for Mathematics and Interdisciplinary Sciences,  Shandong University \& Frontiers Science Center for Nonlinear Expectations, Ministry of Education, Qingdao 266237, P.R. China}
\email{linz@sdu.edu.cn}

\author[J. Liu]{Jing Liu}
\address[Jing Liu]{Research Center for Mathematics and Interdisciplinary Sciences,  Shandong University \& Frontiers Science Center for Nonlinear Expectations, Ministry of Education, Qingdao 266237, P.R. China}
\email{lsweet@mail.sdu.edu.cn}

\author[S.H.F. Yan]{Sherry H.F. Yan}
\address[Sherry H.F.  Yan]{Department of Mathematics,
Zhejiang Normal University, Jinhua 321004, P.R. China}
\email{hfy@zjnu.cn}

\date{\today}

\begin{abstract}
Motivated by Kitaev and Zhang's recent work on non-overlapping ascents in stack-sortable permutations and Dumont's permutation interpretation of the Jacobi elliptic functions, we investigate some parity statistics on restricted permutations. Some new related bijections are constructed and two  refinements of  the generating function for descents over $321$-avoiding permutations due to Barnabei, Bonetti and Silimbanian are obtained. In particular, an open problem of Kitaev and Zhang about non-overlapping ascents on $321$-avoiding permutations is solved and several combinatorial interpretations for the Catalan--Schett polynomials are found. The stack-sortable permutations are at the heart of our approaches. 
\end{abstract}

\keywords{Pattern avoidance, Non-overlapping ascents, Odd descent compositions, Left peaks, Bijections}

\maketitle

\section{Introduction}
Let $\S_n$ be the set of all permutations of $[n]:=\{1,2,\ldots,n\}$.  A permutation $\pi=\pi_1\pi_2\cdots\pi_n\in\S_n$ is said to {\em avoid} pattern $\sigma\in\S_k$ if there does not exist $ i_1<i_2<\dots<i_k$ such that the  subsequence $\pi_{i_1}\pi_{i_2}\dots\pi_{i_k}$ of $\pi$ is order isomorphic to $\sigma$. Let
$$
\S_n(\sigma):=\{\pi\in\S_n: \pi\text{ avoids } \sigma\}.
$$
An element in $\S_n(\sigma)$ is called a {\em$\sigma$-avoiding permutation}. One of the classical  enumerative results in pattern avoiding permutations, attributed to MacMahon and Knuth (see~\cite{Kit2}), is that $|\S_n(\sigma)|=C_n$ for
each pattern $\sigma\in\S_3$, where $C_n=\frac{1}{n+1}{2n \choose n}$ is the $n$-th {\em Catalan number}. Because of Knuth's work~\cite{Knu},  the class of $231$-avoiding permutations was also known as {\em stack-sortable permutations}. 

The MacMahon--Knuth result has been refined several times in the literature. Robertson, Saracino and Zeilberger~\cite{RSZ} refined this result by proving that the distribution of ``number of fixed points'' is the same in 321-avoiding as in 132-avoiding permutations. Elizalde and Pak~\cite{EP} further refined Robertson et al.'s result by  taking into account the ``number of excedances'' in a permutation. Barnabei, Bonetti and Silimbanian~\cite{bbs} showed that 
\begin{equation}\label{eq:BBS}
(t^2x-t^2+t)A^2+(2t^2x^2-2t^x+2xt-x)A+t^2x^3-t^2x^2+tx^2=0,
\end{equation}
where $A$ is the enumerator of $321$-avoiding permutations by the length (variable $t$) and the number of descents (variable $x$).  The main objective of  this paper is to present new refinements, from the  aspects of both bijective combinatorics and the generating functions,  of the MacMahon--Knuth result using two classes of parity statistics on permutations. 

We begin by introducing the first class of parity statistics. 
Given  $\pi\in\S_n$, an index $i\in[n-1]$ is  called an {\em ascent} of $\pi$ if $\pi_i<\pi_{i+1}$ and a {\em descent} of $\pi$ if $\pi_i>\pi_{i+1}$. Let $\ASC(\pi)$ (resp., $\DES(\pi)$) be the set of ascents (resp., descents) of $\pi$. 
To a subset $S\subseteq [n-1]$ with elements $s_1<s_2<\cdots <s_k$, we associate the multiset  
$$\M(S) =
\begin{cases} 
\,\{s_1, s_2-s_1,\dots, s_k-s_{k-1}, n-s_k\}, \quad&\text{if $S\neq\emptyset$;}\\
\,\{n\},&\text{if $S=\emptyset$}.
\end{cases}
$$ Then, $\DR(\pi):=\M(\ASC(\pi))$ is  the {\em descending run multiset} of $\pi$ and $\AR(\pi):=\M(\DES(\pi))$ is the {\em ascending  run multiset} of $\pi$. Note that $\DR(\pi)$ records the lengths of the decreasing runs of $\pi$, while $\AR(\pi)$ records the lengths of the increasing runs of $\pi$. For instance, if $\pi=318972456\in\S_9$, then we have $\DR(\pi)=\{3,2,1^4\}$ as $\ASC(\pi)=\{2,3,6,7,8\}$, while  $\AR(\pi)=\{4,3,1^2\}$ as $\DES(\pi)=\{1,4,5\}$. Introduce four parity statistics of $\pi$ as 
\begin{itemize}
\item$\odr(\pi)$, the number of odd elements in  $\DR(\pi)$;
\item $\edr(\pi)$, the number of even elements in $\DR(\pi)$;
\item$\oar(\pi)$, the number of odd elements in $\AR(\pi)$;
\item $\ear(\pi)$, the number of even elements in  $\AR(\pi)$.
\end{itemize}
Continuing with the running example, we have $\odr(\pi)=5$, $\edr(\pi)=1$, $\oar(\pi)=3$ and $\ear(\pi)=1$.

Another class of parity statistics that we consider are the even/odd left peaks of permutations. Recall that a {\em left peak} of a permutation $\pi\in\S_n$ is a value $\pi_i$, $i\in[n-1]$, such that $\pi_{i-1}<\pi_i>\pi_{i+1}$ with the convention $\pi_0=0$. Let $\LPK(\pi)$ be the set of left peaks of $\pi$. Introduce the two parity statistics of $\pi$ as
\begin{itemize}
\item $\lpk_o(\pi)$, the number of odd elements in $\LPK(\pi)$;
\item $\lpk_e(\pi)$, the number of even elements in $\LPK(\pi)$.
\end{itemize}
For example, if $\pi=3271654$, then $\LPK(\pi)=\{3,6,7\}$,  so $\lpk_o(\pi)=2$ and $\lpk_e(\pi)=1$. 

Our motivation to consider these parity statistics on restricted permutations comes from the recent work by Kitaev and Zhang~\cite{KZ}. The notion of the {\em maximum number of non-overlapping occurrences of a consecutive pattern} in a permutation was  first considered by Kitaev~\cite{Kit}. Kitaev and Zhang~\cite{KZ} focused on the {\em maximum number of non-overlapping descents} (resp., {\em ascents}), denoted $\mnd(\pi)$ (resp., $\mna(\pi)$), of a permutation $\pi$. In other words, we have 
$$
\mnd(\pi)=\sum_{i\in\DR(\pi)}\big\lfloor\frac{i}{2}\big\rfloor\quad\text{and}\quad \mna(\pi)=\sum_{i\in\AR(\pi)}\big\lfloor\frac{i}{2}\big\rfloor. 
$$ 
Notice that 
\begin{equation}\label{ov:mna}
\mnd(\pi)=\frac{n-\odr(\pi)}{2}\quad\text{and}\quad\mna(\pi)=\frac{n-\oar(\pi)}{2}.
\end{equation}
The main results in~\cite{KZ} are outlined as follows.
\begin{theorem}[Kitaev and Zhang~\cite{KZ}]\label{thm:kz}
 The following two results hold:
\begin{itemize}
\item[(i)] the pair $(\mna,\mnd)$ is symmetric over $\S_n(231)$;
\item[(ii)] $|\{\pi\in\S_n(231): \mnd(\pi)=k\}|=\frac{1}{n+1}{n+1\choose 2k+1}{n+k\choose k}$.
\end{itemize}
\end{theorem}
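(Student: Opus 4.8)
The plan is to treat the two parts by different methods: part~(i) via an explicit bijection of $\S_n(231)$ that interchanges the ascending-run and descending-run multisets, and part~(ii) via a functional equation solved by Lagrange inversion.

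\smallskip
\noindent\emph{Part (i).} Since $\mnd(\pi)=\sum_{i\in\DR(\pi)}\lfloor i/2\rfloor$ and $\mna(\pi)=\sum_{i\in\AR(\pi)}\lfloor i/2\rfloor$, it suffices to build a bijection $\phi\colon\S_n(231)\to\S_n(231)$ with $\AR(\phi(\pi))=\DR(\pi)$ and $\DR(\phi(\pi))=\AR(\pi)$ as multisets; then $(\mna,\mnd)(\phi(\pi))=(\mnd,\mna)(\pi)$, which is precisely the asserted symmetry. The reversal $\pi\mapsto\pi^{\,r}$ (with $\pi^{\,r}_i=\pi_{n+1-i}$) already swaps these two multisets, but it sends $\S_n(231)$ onto $\S_n(132)$. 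So I would first show that the descent set $\DES$ is equidistributed over $\S_n(231)$ and over $\S_n(132)$, then take any $\DES$-preserving bijection $g\colon\S_n(132)\to\S_n(231)$ (which then also preserves $\AR$ and $\DR$, as these multisets are determined by $\DES$), and set $\phi=g\circ(\cdot)^{\,r}$. The equidistribution follows by induction on $n$ from the decomposition at the position $p$ of the largest letter: in either class one writes $\pi=L\,n\,R$, where $L$ and $R$ range \emph{independently} over the permutations of the corresponding restricted class, of lengths $p-1$ and $n-p$ (the letter $n$ imposing only $\max L<\min R$ for $\S_n(231)$, resp.\ $\min L>\max R$ for $\S_n(132)$), and in both cases $\DES(\pi)$ equals $\DES(L)\sqcup(\DES(R)+p)$, augmented by the index $p$ exactly when $R\neq\emptyset$ (the entry preceding $n$ lies below $n$, so $p-1$ is never a descent). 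The two classes thus satisfy the same recursion and the same base case, so the descent-set distributions coincide; $g$ may even be realized explicitly by recursing through this decomposition.

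\smallskip
\noindent\emph{Part (ii).} Put $A=A(x,q)=\sum_{n\ge0}x^n\sum_{\pi\in\S_n(231)}q^{\mnd(\pi)}$. Decomposing $\pi=L\,n\,R$ again, one sees that $\DR(\pi)$ is $\DR(L)\uplus\DR(R)$ with the part of $\DR(R)$ recording the first decreasing run of $R$ increased by $1$ (and a part $1$ adjoined when $R=\emptyset$), hence
\[
\mnd(\pi)=\mnd(L)+\mnd(R)+\varepsilon(R),
\]
where $\varepsilon(R)=1$ exactly when the first decreasing run of $R$ has odd length. This requires a refined bookkeeping: I would introduce the generating functions for $231$-avoiders whose first decreasing run has odd, resp.\ even, length, use the decomposition to write down the short linear system they satisfy, and eliminate the auxiliary series to reach
\[
x^{2}q\,A^{3}+(x-1)A+1=0
\]
(at $q=1$ this collapses to the Catalan equation $xA^{2}-A+1=0$, a convenient check, and the first few coefficients can be verified against the values of $\mnd$ on small $\S_n(231)$). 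The substitution $y=xA$ now rewrites the cubic as $y=x(1+y)/(1-qy^{2})$, so Lagrange inversion gives $[x^{n}]A=\frac1{n+1}[t^{n}]\,(1+t)^{n+1}/(1-qt^{2})^{n+1}$; expanding $(1-qt^{2})^{-(n+1)}=\sum_{j\ge0}\binom{n+j}{j}q^{j}t^{2j}$, extracting the coefficient of $q^{k}$ (forcing $j=k$) and then of $t^{n}$ in $\binom{n+k}{k}t^{2k}(1+t)^{n+1}$, one obtains $[x^{n}q^{k}]A=\frac1{n+1}\binom{n+1}{2k+1}\binom{n+k}{k}$, as claimed; by part~(i) the same number also counts the $\pi\in\S_n(231)$ with $\mna(\pi)=k$.

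\smallskip
\noindent\emph{The main obstacle.} The generating-function manipulations are routine once the cubic equation is available; the real work is the combinatorial bookkeeping. In part~(ii) one must pin down exactly how $\mnd$, together with the auxiliary parity of the first decreasing run, transforms under $\pi=L\,n\,R$ — including the degenerate cases $L=\emptyset$ and $R=\emptyset$ — so that the linear system, and hence the cubic, comes out correctly. In part~(i) the delicate point is checking that $\S_n(231)$ and $\S_n(132)$ really do obey the \emph{same} descent-set recursion; this rests on the letter $n$ imposing only a value constraint (not a structural one) between $L$ and $R$, which is special to these two patterns — for instance it fails for $\S_n(213)$, over which $\DES$ is not equidistributed with $\S_n(231)$.
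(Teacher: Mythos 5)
Your proposal is correct, but it necessarily takes a different route from the paper, because the paper does not prove Theorem~\ref{thm:kz} at all: it is quoted from Kitaev and Zhang~\cite{KZ}, whose proofs (as recalled in the introduction) are bijective --- for (ii), a bijection between $\{\pi\in\S_n(231):\mnd(\pi)=k\}$ and the binary trees with $\X(T)=k$ counted by Sun~\cite{Sun}, and a second recursive bijection with Callan's marked plane trees~\cite{Cal}; the paper's own bijection $\Upsilon$ of Theorem~\ref{thm:sch} (and $\vartheta$ of Theorem~\ref{bij:pt-231}) gives further bijective routes to (ii). Your part (i) is sound: reversal swaps $\AR$ and $\DR$ and maps $\S_n(231)$ onto $\S_n(132)$, and your greatest-letter recursion for the set-valued $\DES$ is valid in both classes (the letter $n$ only forces $\max L<\min R$, resp.\ $\min L>\max R$, and $\DES(\pi)=\DES(L)\sqcup(\DES(R)+p)$ plus $p$ exactly when $R\neq\emptyset$), so $\phi=g\circ(\cdot)^{r}$ exchanges the two run multisets and hence $(\mna,\mnd)$; your side remark about $\S_n(213)$ is also correct. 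For part (ii), the one step you leave implicit does work out: writing $A_o,A_e$ for the series of nonempty $231$-avoiders whose initial decreasing run is odd, resp.\ even, and $B=1+qA_o+A_e$, the decomposition $\pi=L\,n\,R$ gives $A=1+xAB$, $A_o=xA_oB+x(1+A_e)$, $A_e=xA_eB+xqA_o$, and eliminating $A_o,A_e$ yields exactly your cubic $x^{2}qA^{3}+(x-1)A+1=0$; the Lagrange-inversion extraction from $y=x(1+y)/(1-qy^{2})$ is then routine and gives $\frac{1}{n+1}\binom{n+1}{2k+1}\binom{n+k}{k}$, consistent with small cases. As for what each approach buys: the bijective proofs of \cite{KZ} and of this paper explain why the tree statistics $\X$ and $\mk$ appear and extend to joint refinements such as~\eqref{X:Y}, whereas your argument is shorter and self-contained, producing the whole distribution as a $q$-deformation of the Catalan equation, in the same generating-function spirit as (but much simpler than) the computations of Section~\ref{sec:gf}.
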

Let $\des(\pi)=|\DES(\pi)|$ be the number of descents of $\pi$. As $\mnd(\pi)=\des(\pi)$ for any $\pi\in\S_n(321)$, the distribution of ``$\mnd$'' over $321$-avoiding permutations has been computed by Barnabei et al.~\cite{bbs}. Kitaev and Zhang~\cite{KZ} posed the following open problem, resolving which would complete the enumeration of ``$\mnd$'' and ``$\mna$'' over permutations avoiding a pattern of length $3$. 

\begin{problem}[Kitaev and Zhang~\cite{KZ}]\label{PB:kz}
Find the distribution of ``$\mna$'' over $321$-avoiding permutations. 
\end{problem}

The refinement of Catalan numbers 
\begin{equation}\label{ref:cat}
\frac{1}{n+1}{n+1\choose 2k+1}{n+k\choose k}
\end{equation} appears in Theorem~\ref{thm:kz} also has other known combinatorial interpretations. 
A {\em binary tree} is a special type of rooted tree in which every internal node  has either one left child  or one right child  or both. Let $\B_n$ be the set of all binary trees with $n$ nodes.  Given a binary tree, its {\em left chain} (resp.,~{\em right chain}) is any maximal path composed of only left (resp.,~right) edges. 
The {\em order} of a left/right chain is the number of its nodes. Let $\LC(T)$ (resp.,~$\RC(T)$) be the multiset of all orders of the left (resp.,~right) chains of a tree $T\in\B_n$. 
For instance, the binary tree in Fig.~\ref{vie} has $5$ right chains, which are $4-3-2$, $9-8$, $7-6$,  $1$ and $5$, and so $\RC(T)=\{3,2^2,1^2\}$. Define two statistics of $T$  as 
$$
\X(T)=\sum_{i\in\LC(\pi)}\big\lfloor\frac{i}{2}\big\rfloor\quad\text{and}\quad \Y(T)=\sum_{i\in\RC(\pi)}\big\lfloor\frac{i}{2}\big\rfloor. 
$$ 
For the running example tree $T$, we have $\X(T)=2$ and $\Y(T)=3$. Sun~\cite{Sun} proved combinatorially that 
the number~\eqref{ref:cat} enumerates binary trees $T\in\B_n$ with $\X(T)=k$. Kitaev and Zhang~\cite{KZ} constructed a bijection between $\{T\in\B_n:\X(T)=k\}$ and $\{\pi\in\S_n(231): \mnd(\pi)=k\}$, providing the first proof of Theorem~\ref{thm:kz}~(ii).  

The original impetus of this work lies in an unexpected connection between the joint distribution of $(\X,\Y)$ over binary trees and the multiset Schett polynomials introduced by Ma and the first author~\cite{LM}.  The {\em Jacobi elliptic function} (see~\cite{LM}) $\sn(u,\alpha)$ may be defined by the inverse of an elliptic integral:
$$
\sn(u,\alpha)=y\quad\text{iff}\quad u=\int_0^y\frac{\der t}{\sqrt{(1-t^2)(1-\alpha^2t^2)}},
$$
where $\alpha\in(0,1)$ is a real number. When $a=1$, $\sn(u,\alpha)$ becomes the sine function $\sin(u)$. For a multiset $M$, a binary tree whose nodes are labeled exactly by $M$ such that each child node receives a label weakly greater than its parent is called a  {\em weakly increasing binary trees} on $M$. According to the work in~\cite{LM,LLWZ}, the {\em multiset Schett polynomials} $S_M(x,y)$, which extends the Jacobi elliptic function from sets to multisets,  can be interpreted as 
$$
S_M(x,y)=\sum_{T\in\B_M} x^{\olc(T)}y^{\orc(T)},
$$
where  $\B_M$ is the set of weakly increasing binary trees on $M$ and $\olc(T)$ (resp., $\orc(T)$) denotes the number of left (resp.,~right) chains of $T$ with odd orders. 
Note that weakly increasing binary trees on $[n]$ are exactly {\em increasing binary trees} on $[n]$, while weakly increasing trees on $\{1^n\}$ are in obvious bijection with  $\B_n$.  For convenience, we write $S_M(x,y)$ as $S_n(x,y)$ (resp.,~$C_n(x,y)$) when $M=[n]$ (resp.,~$M=\{1^n\}$). Then, $S_n(x,y)$ are the classical Schett polynomials (see~\cite{Schet,Dum,LM}) that specialize to the Jacobi elliptic function $\sn(u,\alpha)$. The first combinatorial interpretation of $S_n(x,y)$, which is in terms of $(\lpk_o,\lpk_e)$ on permutations,  was found by Dumont~\cite{Dum}.  The bivariate extension of Catalan numbers $C_n(x,y)$ will be named the {\em Catalan--Schett polynomials}. The first few values of $C_n(x,y)$ are:
\begin{align*}
C_1(x,y)&=xy,\quad C_2(x,y)=x^2+y^2, \quad C_3(x,y)=x^{3}y+xy^{3}+3xy,\\
C_4(x,y)&={x}^{4}+8{y}^{2}{x}^{2}+{y}^{4}+2{x}^{2}+2{y}^{2},\\
C_5(x,y)&={x}^{5}y+5{x}^{3}{y}^{3}+x{y}^{5}+15{x}^{3}y+15x{y}^{3}+5xy\\
C_6(x,y)&={x}^{6}+27{x}^{4}{y}^{2}+27{y}^{4}{x}^{2}+{y}^{6}+8{x}^{4}+54{
y}^{2}{x}^{2}+8{y}^{4}+3{x}^{2}+3{y}^{2}.
\end{align*} 
As
$$
\X(T)=\frac{n-\olc(T)}{2}\quad\text{and}\quad\Y(T)=\frac{n-\orc(T)}{2}
$$
for any $T\in\B_n$, we have 
$$
C_n(x,y)=\sum_{T\in\B_n} x^{n-2\X(T)}y^{n-2\Y(T)}. 
$$
Our first main result provides a new interpretation of $C_n(x,y)$ in terms of stack-sortable permutations.

\begin{theorem}\label{thm:sch}
There is a bijection $\Upsilon: \pi\mapsto T$ between $\S_n(231)$ and $\B_n$ such that 
\begin{equation}\label{dr:ar}
(\DR(\pi),\AR(\pi^{-1}))=(\LC(T),\RC(T)).
\end{equation}
Consequently, 
\begin{equation}\label{X:Y}
\sum_{\pi\in\S_n(231)}x^{\mnd(\pi)}y^{\mna(\pi^{-1})}=\sum_{T\in\B_n} x^{\X(T)}y^{\Y(T)}
\end{equation}
and there follows the permutation interpretation for the Catalan--Schett polynomials 
$$
C_n(x,y)=\sum_{\pi\in\S_n(231)}x^{\odr(\pi)}y^{\oar(\pi^{-1})}.
$$
\end{theorem}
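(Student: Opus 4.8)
The plan is to construct $\Upsilon$ recursively, to prove \eqref{dr:ar} by induction on $n$, and then to deduce \eqref{X:Y} and the Catalan--Schett identity formally.

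Recall the standard recursive description of $\S_n(231)$: each $\pi\in\S_n(231)$ factors uniquely as $\pi=L\,n\,R$ with $n$ in position $|L|+1$, and because $\pi$ avoids $231$ every entry of $L$ lies below every entry of $R$, so after standardising $L\in\S_{|L|}(231)$ and $R\in\S_{|R|}(231)$. Likewise each $T\in\B_n$ is either empty or decomposes as a new root carrying a left subtree $T_L$ and a right subtree $T_R$. I would define $\Upsilon$ compatibly with these two recursions; the delicate point is that the new node and the two recursively built subtrees have to be assembled in a way that is forced by \emph{both} halves of \eqref{dr:ar} at once. The elementary facts that drive this are (modulo the obvious adjustments when $L$ or $R$ is empty): (a) passing from $L,R$ to $L\,n\,R$ leaves $\DR(L)$ untouched and prepends $n$ onto the first decreasing run of $R$; (b) since $n$ sits in position $|L|+1$ of $\pi$, the value $|L|+1$ sits in the last position of $\pi^{-1}$, and a short computation then shows $\pi^{-1}$ to be the concatenation of $L^{-1}$, of $R^{-1}$ with every entry raised by $|L|+1$, and of the single entry $|L|+1$, whence $\AR(\pi^{-1})$ is obtained from $\AR(L^{-1})$ and $\AR(R^{-1})$ by merging the last run of the former into the first run of the latter and adjoining a new part $1$; (c) grafting a root onto $T_L,T_R$ merges the root into the topmost left chain of $T_L$ and into the topmost right chain of $T_R$. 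Matching (a)/(c) on the left coordinate and (b)/(c) on the right coordinate is what dictates the assembly rule, and I would also carry two strengthened invariants through the induction — that $\Upsilon$ sends the first decreasing run of $\pi$ to the topmost left chain of $T$ and the first increasing run of $\pi^{-1}$ to the topmost right chain of $T$ — which is exactly what lets the recursive step close. Bijectivity of $\Upsilon$ is then automatic, the recursion matching the Catalan recursion $C_n=\sum_{i=0}^{n-1}C_iC_{n-1-i}$ term by term.

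The main obstacle is precisely the choice of this assembly rule. The first recursion one writes down — cut at the maximum and hang $\Upsilon(R)$ and $\Upsilon(L)$ as the left and right subtrees of a fresh root — already forces $\DR(\pi)=\LC(T)$, but it yields the wrong $\RC(T)$ in general; so the decomposition of $\pi$ and the grafting of the new node must be arranged with more care, and the inductive hypothesis strengthened as above, so that a single rule simultaneously governs the decreasing runs of $\pi$ and the increasing runs of $\pi^{-1}$. Once \eqref{dr:ar} is in hand the rest is bookkeeping: applying $\sum_{i\in\,\cdot\,}\lfloor i/2\rfloor$ to each coordinate of \eqref{dr:ar} and using $\mnd(\pi)=\sum_{i\in\DR(\pi)}\lfloor i/2\rfloor$, $\mna(\pi^{-1})=\sum_{i\in\AR(\pi^{-1})}\lfloor i/2\rfloor$, $\X(T)=\sum_{i\in\LC(T)}\lfloor i/2\rfloor$ and $\Y(T)=\sum_{i\in\RC(T)}\lfloor i/2\rfloor$ gives \eqref{X:Y}; while recording instead, in each coordinate of \eqref{dr:ar}, the number of odd parts — equal to $\odr(\pi)$, $\oar(\pi^{-1})$, $\olc(T)$, $\orc(T)$ respectively — together with the defining expansion $C_n(x,y)=\sum_{T\in\B_n}x^{\olc(T)}y^{\orc(T)}$, gives the permutation interpretation $C_n(x,y)=\sum_{\pi\in\S_n(231)}x^{\odr(\pi)}y^{\oar(\pi^{-1})}$.
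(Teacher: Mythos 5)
The reductions at the end of your proposal are fine: once \eqref{dr:ar} is known, applying $\sum_{i}\lfloor i/2\rfloor$ coordinatewise gives \eqref{X:Y}, and counting odd parts coordinatewise gives the Catalan--Schett interpretation, exactly as in the paper. You have also correctly identified the right strengthened invariant to carry through the induction (the paper's Lemma on $\idr(\pi)=\larm(T)$ and $\iar(\pi^{-1})=\rarm(T)$), and your bookkeeping facts (a), (b), (c) about how $\DR$, $\AR(\cdot^{-1})$ and the chain multisets transform are all correct.

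The genuine gap is that the bijection itself is never constructed. You work with the greatest-letter decomposition $\pi=L\,n\,R$, observe that the naive rule (new root with $\Upsilon(R)$ and $\Upsilon(L)$ as left and right subtrees) gets $\DR(\pi)=\LC(T)$ right but $\RC(T)$ wrong, and then assert that ``the decomposition of $\pi$ and the grafting of the new node must be arranged with more care'' so that both coordinates work --- but no such arrangement is given, and this is precisely the content of the theorem. Indeed, with the cut-at-maximum decomposition no single graft-a-root rule can work: by your own item (b), passing from $(L,R)$ to $\pi$ \emph{merges} the last increasing run of $L^{-1}$ with the first increasing run of $R^{-1}$ into one part and creates a new part equal to $1$, whereas attaching a fresh root to two subtrees only \emph{extends} one left chain and one right chain by one node each; these multiset operations have different shapes, so the mismatch you noticed on the right coordinate is structural, not a matter of choosing which subtree goes where. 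The paper resolves this by changing the decomposition altogether: it uses the first-letter decomposition $\pi=(k\cdot\pi')\oplus\pi''$ of stack-sortable permutations and, in the case $\pi'\neq\emptyset$, a nontrivial tree surgery $T\mapsto T^{*}$ (detach the right branch of the left child of the root and reattach it as the right branch of the root) before hanging $\Upsilon(\pi'')$; reversibility of this surgery is also what makes bijectivity go through. Your proposal contains no substitute for this device, so the inductive step you describe cannot be closed as written.
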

Combining $\Upsilon$ with several known bijections, we can prove another interpretation of the Catalan--Schett polynomials in terms of $321$-avoiding permutations, with the role of descents replaced by excedances; see Theorem~\ref{thm:Phi2}.

There is another interpretation of the number~\eqref{ref:cat} in terms of plane trees found by Callan~\cite{Cal}. Recall that a {\em plane
tree} is a rooted tree in which the children of each node are linearly ordered.
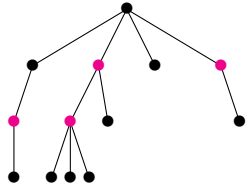
\begin{figure}
\centering
\begin{tikzpicture}[scale=0.25]
\node at (20,20) {$\bullet$};
\node at (15,17) {$\bullet$};

\node at (21.5,17){$\bullet$};
\node at  (26,14) {$\bullet$};

\node at (16,11) {$\bullet$};
\node at (17,11) {$\bullet$};
\node at (18,11) {$\bullet$};

\node at (19,14) {$\bullet$};

\node at (14,11) {$\bullet$};

\draw[-] (25,17) to (26,14);
\draw[-] (20,20) to (15,17);
\draw[-] (20,20) to (18.5,17);
\draw[-] (20,20) to (21.5,17);
\draw[-] (20,20) to (25,17);
\draw[-] (15,17) to (14,14);
\draw[-] (14,14) to (14,11);
\draw[-] (18.5,17) to (17,14);
\draw[-] (18.5,17) to (19,14);
\draw[-] (17,14) to (16,11);
\draw[-] (17,14) to (17,11);
\draw[-] (17,14) to (18,11);

\node at (17,14) {\magenta{$\bullet$}};
\node at (18.5,17) {\magenta{$\bullet$}};
\node at (25,17) {\magenta{$\bullet$}};
\node at (14,14){\magenta{$\bullet$}};
\end{tikzpicture}
\caption{A plane tree with four marked nodes (in magenta).\label{p:tree}}
\end{figure}
An internal node (other than the root) in a plane tree is {\em marked} if it has a leaf as its child. See Fig.~\ref{p:tree} for a plane tree with four marked nodes.  Let $\T_n$ be the set of plane trees with $n$ edges and let $\mk(T)$ be the number of marked nodes in a tree $T\in\T_n$. Callan~\cite{Cal} proved that the number~\eqref{ref:cat} counts plane trees $T\in\T_n$ with $\mk(T)=k$. In order to provide the second proof of Theorem~\ref{thm:kz}~(ii), Zhang and Kitaev~\cite{KZ} constructed a recursive bijection, involving  five different cases, between  $\{T\in\T_n:\mk(T)=k\}$ and $\{\pi\in\S_n(231): \mnd(\pi)=k\}$. Here we provide a case-free recursive bijection which requires extra decompositions of plane trees. 

\begin{theorem}\label{bij:pt-231}
There is a bijection $\vartheta: \T_n \to \S_n(231)$ such that $\mk(T)=\mnd(\vartheta(T))$ for  any $T \in \T_n$.
\end{theorem}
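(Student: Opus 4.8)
The plan is to define $\vartheta$ by a recursion matching a decomposition of plane trees with a decomposition of $231$-avoiding permutations. On the permutation side, split $\pi\in\S_n(231)$ at the position of its largest letter, $\pi=\alpha\,n\,\beta$. Avoidance of $231$ forces every letter of $\alpha$ to be smaller than every letter of $\beta$, so $\alpha$ and the renormalisation of $\beta$ are again $231$-avoiding, and following the descending runs across the letter $n$ gives $\mnd(\pi)=\mnd(\alpha)+\mnd(\beta)+\varepsilon$, where $\varepsilon=1$ if the first descending run of $\beta$ has odd length and $\varepsilon=0$ otherwise (so $\varepsilon=0$ when $\beta$ is empty). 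On the tree side, peel off from $T\in\T_n$ the subtree $A$ rooted at the first child of the root, leaving the plane tree $B$ consisting of the root with its remaining subtrees; a short count of marked nodes gives $\mk(T)=\mk(A)+\mk(B)+\lambda$, where $\lambda=1$ if the root of $A$ has a leaf child and $\lambda=0$ otherwise. The candidate map is $\vartheta(T):=\vartheta(B)$, then $n$, then $\vartheta(A)$ shifted so its letters become those immediately below $n$. This is manifestly $231$-avoiding; it is a bijection because, given $\pi\in\S_n(231)$, the letter $n$ pins down the split and the letters before and after it are forced to occupy an initial and a final value-segment, so $A$ and $B$ are recovered recursively. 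And $\mk(T)=\mnd(\vartheta(T))$ would then follow by induction on $n$ once one has the auxiliary invariant: \emph{for every plane tree $S$, the first descending run of $\vartheta(S)$ has odd length if and only if the root of $S$ has a leaf child.}

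This auxiliary invariant is the crux, and it is exactly where the plain recursion breaks down. The first descending run of $\vartheta(S)$ depends only on the first child of the root of $S$ and its subtree, so it is blind to a leaf child of the root that is preceded by an internal child, while $\lambda(S)$ reacts to a leaf child wherever it sits. In fact the naive rule sends a path to a decreasing permutation, so already the path with four edges goes to $4321$, which has $\mnd=2$ rather than $\mk=1$: the naive $\vartheta$ is genuinely wrong, not merely off on a bookkeeping statistic. Repairing this is what calls for the extra decompositions of plane trees: before peeling one must reorganise the children of the root so that a leaf child, in whatever position, is recorded at the boundary of the image permutation — for instance by first detaching the (possibly empty) run of leaf children of the root as its own piece, or by peeling the first child together with the leaf children adjacent to it, and then recursing on the resulting pieces while carrying a refined boundary datum (a parity together with a flag for a ``pending'' leaf child). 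With this reorganised recursion, one re-runs the proof: verify that $\vartheta$ remains a bijection by exhibiting the inverse on each reorganised piece, and verify that the refined boundary datum evolves so that the cancellation $\mnd(\vartheta(T))=\mk(T)$ stays exact, with no case split.

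I expect the real difficulty to be making the reorganisation canonical: choosing the extra decomposition so that it is simultaneously reversible and compatible with the precise bookkeeping of the $\varepsilon$-correction, uniformly in $T$. Small $n$ can be checked by hand against the count $\frac{1}{n+1}\binom{n+1}{2k+1}\binom{n+k}{k}$ of Theorem~\ref{thm:kz}(ii). One could instead compose with the bijection $\Upsilon$ of Theorem~\ref{thm:sch}, reducing the statement to a case-free bijection $\T_n\to\B_n$ sending $\mk$ to $\X$; but the same phenomenon recurs there — controlling how leaf children of the root interact with the left chains of the binary tree — so the reorganisation of the plane tree is unavoidable either way.
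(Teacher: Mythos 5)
Your proposal sets up the naive recursion correctly (greatest-letter split $\pi=\alpha\, n\,\beta$ on the permutation side, first-child split $T\mapsto(A,B)$ on the tree side), computes both recursions $\mnd(\pi)=\mnd(\alpha)+\mnd(\beta)+\varepsilon$ and $\mk(T)=\mk(A)+\mk(B)+\lambda$ accurately, and rightly observes -- with the valid counterexample of the path, which the naive map sends to $4321$ with $\mnd=2\neq 1=\mk$ -- that the auxiliary invariant needed to match $\varepsilon$ with $\lambda$ fails. But this is where the proposal stops. The repaired construction is only gestured at (``detach the run of leaf children,'' ``peel the first child together with adjacent leaf children,'' ``carry a pending-leaf flag''): no map is actually defined, no inverse is exhibited, and the identity $\mk(T)=\mnd(\vartheta(T))$ is not verified for any concrete repaired recursion. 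Since the entire content of the theorem is the existence of such a bijection, this is a genuine gap rather than a verification left to the reader -- indeed you flag yourself that making the reorganisation canonical, reversible, and compatible with the parity bookkeeping is the real difficulty, and that difficulty is left unresolved. (A small inaccuracy along the way: under your map the initial descending run of $\vartheta(S)$ is governed by the piece $\vartheta(B)$ placed before $n$, i.e.\ by the children of the root other than the first, not by the first child and its subtree; the conclusion that the invariant fails is nevertheless correct.)

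For comparison, the paper resolves exactly this difficulty by pairing the \emph{first-letter} decomposition $\pi=(k\cdot\pi')\oplus\pi''$ of Lemma~\ref{dec:231} (rather than the greatest-letter one) with a two-case surgery on plane trees: if the root has a leaf child, cut at the leftmost leaf child of the root, so that the left piece $T_L$ has a root all of whose children are non-leaves (Fig.~\ref{tree1}); if all children of the root are non-leaves, cut along the path from the root to the first preorder leaf $x$, re-rooting $T_L$ at the parent $y$ of $x$ -- which ``spends'' the marked node $y$ and accounts for the extra $+1$ -- and assembling the subtrees hanging off that path into $T_R$ (Fig.~\ref{tree2}). Setting $\vartheta(T)=(k\cdot\vartheta(T_L))\oplus\vartheta(T_R)$, the invariant you identified as the crux -- $\idr(\vartheta(T))$ is odd if and only if the root of $T$ has a leaf child (Lemma~\ref{idr:eo}) -- holds by construction, and the induction then closes with precisely the cancellation you describe. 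So your diagnosis of what is needed is sound, but the decomposition supplying it is the essential missing piece of the proof.
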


Various bijections and equidistributions between $\S_n(231)$ and $\S_n(321)$ have already been investigated  in the literature; see  Kitaev's monograph~\cite[Chap.~4]{Kit2} for a survey. Inspired by Dumont's permutation interpretation for the Schett polynomials, we find the following unexpected equidistribution. 

\begin{theorem}\label{bij:LPK}
There is a bijection $\Psi: \S_n(321)\rightarrow \S_n(231)$ preserving the set-valued statistic ``$\LPK$''. Consequently, 
\begin{equation}\label{eo:lpk}
\sum_{\pi\in\S_n(321)}x^{\lpk_e(\pi)}y^{\lpk_o(\pi)}=\sum_{\pi\in\S_n(231)}x^{\lpk_e(\pi)}y^{\lpk_o(\pi)}. 
\end{equation}
\end{theorem}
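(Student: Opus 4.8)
The plan is to exploit the standard recursive (``block'') structure of both $\S_n(321)$ and $\S_n(231)$ and show that the statistic $\LPK$ can be tracked through compatible recursive decompositions on the two sides, so that a bijection preserving $\LPK$ can be built by induction on $n$. First I would recall the well-known first-return decomposition of a $231$-avoiding permutation $\sigma$: writing $\sigma = \sigma_L\, n\, \sigma_R$ where $n$ is the position of the largest value, the $231$-avoidance forces every entry of $\sigma_L$ to be larger than every entry of $\sigma_R$, so $\sigma_L$ is (order-isomorphic to) a $231$-avoiding permutation on an interval $\{k+1,\dots,n-1\}$ and $\sigma_R$ is a $231$-avoiding permutation on $\{1,\dots,k\}$. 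On the $321$-avoiding side I would use the dual decomposition around the entry equal to $n$: for $\pi\in\S_n(321)$, $\pi = \pi_L\, n\, \pi_R$, and $321$-avoidance forces $\pi_R$ to be increasing and, more importantly, forces a left-to-right structure that again splits the value set. The key observation to establish first is how a left peak behaves under concatenation at the top value $n$: the value $n$ is always a left peak unless it sits at the very end, the left peaks of the left block are inherited unchanged, and the left peaks of the right block are shifted only by the (known) value-offset, with the single boundary entry possibly gaining or losing peak status in a controlled way.

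The main technical step is to set up the induction so that these boundary effects cancel. Concretely, I would prove a slightly stronger statement: for each $n$ and each choice of ``top entry'' position and split value $k$, there is a bijection between the corresponding cells of $\S_n(321)$ and $\S_n(231)$ that preserves $\LPK$, built from the inductively given bijections on the two smaller blocks together with the (order-preserving) relabelling of value sets — the point being that relabelling an interval of consecutive integers by a fixed shift changes the \emph{values} of left peaks in a prescribed way, and since the value sets assigned to the left and right blocks are determined symmetrically by $k$ on both sides, the resulting multiset of actual left-peak values agrees. I would organize the argument so that one tracks $\LPK$ as a subset of $[n]$ throughout; the delicate part is the entry straddling the split on the $321$-side (where $\pi_R$ must be increasing) versus the $231$-side (where $\sigma_L$ sits above $\sigma_R$), and one must check the convention $\pi_0 = 0$ is handled correctly at the start of each sub-block after relabelling — in particular that an entry which is a left peak relative to the global sequence is correctly detected as (or excluded from) a left peak relative to its block.

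The step I expect to be the main obstacle is precisely this reconciliation of the two different ``shapes'' of decomposition: the natural recursive structure of $\S_n(231)$ splits by the \emph{position} of the maximum with a clean value-split, while the natural structure of $\S_n(321)$ (for which $\pi_R$ is forced increasing) gives a less symmetric split, and making the two recursions line up cell-by-cell while keeping $\LPK$ literally on the nose — not merely equidistributed — requires choosing the right invariant to induct on. An alternative, possibly cleaner route, which I would try in parallel, is to go through the Schett polynomials: by Dumont's interpretation the right-hand side of \eqref{eo:lpk} is governed by the same data that \eqref{X:Y} and Theorem~\ref{thm:sch} connect to binary trees, so one could instead build $\Psi$ as a composite $\S_n(321)\to\B_n\to\S_n(231)$ using $\Upsilon$ from Theorem~\ref{thm:sch} together with a $\LPK$-preserving map from $\S_n(321)$ to binary trees; the obstacle there shifts to producing that last map, but it may be more transparent since $\LPK$ on $321$-avoiding permutations has a clean description via the left-to-right maxima.
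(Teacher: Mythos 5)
Your plan hinges on a refined statement that is in fact false: you propose to match cells of $\S_n(321)$ and $\S_n(231)$ determined by the position of the maximum (and a split value $k$) and then induct. Already for $n=3$ the cells by position of the maximum have different sizes: the only $321$-avoider with $3$ in position $1$ is $312$, while $\S_3(231)$ has two such permutations, $312$ and $321$. Worse, no bijection can preserve both $\LPK$ and the position of the maximum: the permutations of $\S_3(321)$ with $\LPK=\{3\}$ are $132,231,312$ (maximum in positions $2,2,1$), whereas those of $\S_3(231)$ with $\LPK=\{3\}$ are $132,312,321$ (positions $2,1,1$). So the ``slightly stronger statement'' you want to induct on does not hold, and the boundary analysis cannot rescue it. There are also structural slips in the setup: for a $231$-avoiding $\sigma=\sigma_L\,n\,\sigma_R$ every entry of $\sigma_L$ is \emph{smaller} than every entry of $\sigma_R$ (you state the reverse, which is the $132$-avoiding structure), and on the $321$-avoiding side the blocks to the left and right of $n$ do not split the value set at all (e.g.\ $2143$), so there is no ``split value $k$'' to synchronize the two recursions. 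Your alternative route through binary trees leaves its key ingredient (an $\LPK$-preserving map $\S_n(321)\to\B_n$ compatible with a map $\B_n\to\S_n(231)$) entirely unconstructed; note that $\Upsilon$ controls the run statistics $\DR$ and $\AR$, not $\LPK$, so it does not obviously help.

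For comparison, the paper avoids any attempt to align maximum-based decompositions. It builds $\Psi=\psi_{FZ}\circ\eta$ in two steps: the Simion--Schmidt bijection $\eta:\S_n(321)\to\S_n(312)$, which preserves the values and positions of left peaks, followed by a bijection $\psi_{FZ}:\S_n(312)\to\S_n(231)$ obtained from the Foata--Zeilberger correspondence with restricted Laguerre histories. The decisive observation is that if $\Psi_{FZ}(\pi)=(M,w)$ then $\LPK(\pi)=\{2\le i\le n: m_i=D\}$ depends only on the two-colored Motzkin path $M$, not on the weights; a $312$-avoider corresponds to the all-zero weight and a $231$-avoider to the maximal weight, so switching the weight function changes the avoidance class while leaving $\LPK$ untouched. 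If you want to salvage a recursive proof, you would need an invariant finer than $\LPK$ that genuinely has the same joint distribution on both classes, and the counterexample above shows the position of the maximum is not such an invariant.
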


Note that $\LPK(\pi)=\DES(\pi)$ for any $\pi\in\S_n(321)$. Using the bijection $\Psi$ and a bijection of Krattenthaler~\cite{Kra} between $321$-avoiding permutations and Dyck paths, we prove the following two refinements of Barnabei et al.'s generating function formula~\eqref{eq:BBS}.

\begin{theorem}\label{thm:gf}
Let 
$$G(t,x,y)=\sum_{n\geq1}t^n\sum_{\pi\in\S_n(321)}x^{\oar(\pi)}y^{\ear(\pi)}\quad\text{and}\quad M(t,x,y)=\sum_{n\geq1}t^n\sum_{\pi\in\S_n(321)}x^{\lpk_e(\pi)}y^{\lpk_o(\pi)}.
$$
Then both generating functions $G$ and $M$ are algebraic. 
\begin{itemize}
\item[(i)] $G$ satisfies an algebraic equation of degree $4$; see~\eqref{alg:gf1}. In particular, if we set $A=G(t,x,1)$, then $A$ satisfies 
\begin{align*}
\qquad(t^5+4t^4x+4t^3)A^4 = & (8t^3x^2+8t^2x-4t^5-14t^4x-16t^3)A^3-(6t^5+2t^4x^3\\
&+18t^4x+22t^3+5t^2x^3+5tx^2-13t^3x^2-17t^2x-t)A^2\\
&-(4t^5+4t^4x^3+10t^4x+ 12t^3+3tx^2+x-2t^3x^4 - 2t^3x^2\\
&-8t^2x-tx^4 -2t-x^3)A-( t^5+2t^4x^3 +2t^4x+t^3x^6  \\
&+3t^3x^2+2t^3+2t^2x^5+ t^2x+tx^4  -2t^3x^4 -3t^2x^3 -tx^2),
\end{align*}
which solves Problem~\ref{PB:kz} in view of~\eqref{ov:mna}. 
\item[(ii)] $M$ satisfies an algebraic equation of degree $6$; see~\eqref{alg:gf2}.
\end{itemize}
\end{theorem}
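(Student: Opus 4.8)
\emph{Proof plan.} For both parts the strategy is the same: transport the generating function to a model carrying a transparent recursive structure, encode that structure as a finite system of functional equations after adjoining just enough ``parity state'', and eliminate the auxiliary unknowns to reach a single algebraic relation. The two parts differ in the model used --- Dyck paths via Krattenthaler's bijection for $G$, and $231$-avoiding permutations via $\Psi$ for $M$.

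For part~(i), I would pass through Krattenthaler's bijection $\kappa\colon\S_n(321)\to\D_n$ onto Dyck paths of semilength $n$. The key preliminary step is to read off, from the left-to-right maxima description of $\kappa$, the image of the ascending-run multiset $\AR(\pi)$. Since a $321$-avoiding permutation has no two adjacent descents (two adjacent descents would create a decreasing subsequence of length $3$), its increasing runs are long in the interior, and I expect $\AR(\pi)$ to correspond under $\kappa$ to an explicit family of maximal sub-factors of $\kappa(\pi)$ --- the natural guess being the pieces cut out by the valleys of the path, with the two extreme runs attached to the initial and terminal ascents --- so that $\oar(\pi)$ and $\ear(\pi)$ become the numbers of odd-length and even-length members of that family. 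With this dictionary, one sets up a functional equation for a suitable refinement of $G$ (with an extra variable marking the current height, if the decomposition calls for it, and a bit tracking the parity of the sub-factor currently being built), and resolves it --- by the kernel method, if a catalytic variable is present. The outcome is an algebraic equation of degree $4$ for $G$, namely~\eqref{alg:gf1}; putting $y=1$ and clearing denominators gives the displayed quartic for $A=G(t,x,1)$. Since $\oar(\pi)=n-2\mna(\pi)$ on $\S_n(321)$ by~\eqref{ov:mna}, the coefficients of $A$ record the distribution of $\mna$ over $321$-avoiding permutations, which settles Problem~\ref{PB:kz}. As consistency checks, $G(t,1,1)=\sum_{n\ge1}C_nt^n$ and $G(t,x,x)$ is $x$ times the descent enumerator of $321$-avoiding permutations, so the specialisations $x=y=1$ and $x=y$ of~\eqref{alg:gf1} must reduce to the Catalan equation and to~\eqref{eq:BBS}.

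For part~(ii), I would first apply the bijection $\Psi$ of Theorem~\ref{bij:LPK} to rewrite $M(t,x,y)=\sum_{n\ge1}t^n\sum_{\pi\in\S_n(231)}x^{\lpk_e(\pi)}y^{\lpk_o(\pi)}$, and then exploit the block decomposition $\pi=\alpha\,n\,\beta$ of a $231$-avoiding permutation, where $\alpha$ is a $231$-avoiding permutation of the top block $\{|\beta|+1,\dots,n-1\}$ and $\beta$ is a $231$-avoiding permutation of $\{1,\dots,|\beta|\}$. Tracing the left peaks through this decomposition, $\LPK(\pi)$ is the disjoint union of $\LPK(\alpha)$, the singleton $\{n\}$ (present exactly when $\beta\neq\emptyset$), and $\LPK(\beta)$ with $\beta_1$ removed (exactly when $\beta_1>\beta_2$); moreover, after standardising $\alpha$ to a permutation of $\{1,\dots,|\alpha|\}$, the parities of its left-peak values all flip precisely when $|\beta|$ is odd. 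Consequently, to obtain a closed system one must refine $M$ by three pieces of state: the parity of $n$ (to govern the $x\!\leftrightarrow\!y$ swap forced by the block shift), and --- to control the removal of $\beta_1$ --- whether the permutation begins with a descent, together with the parity of its first letter in that case. Solving the resulting finite system of algebraic equations and eliminating the auxiliary series yields the degree-$6$ equation~\eqref{alg:gf2}; as a check, the specialisation $x=y$ should collapse it to~\eqref{eq:BBS}, since for a $321$-avoiding permutation the map $i\mapsto\pi_i$ carries descents bijectively to left peaks, so $\lpk_e+\lpk_o=\des$ and $M(t,x,x)$ is the descent enumerator governed by~\eqref{eq:BBS}.

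The conceptual crux, and the place I expect the most friction, is pinning down the two combinatorial dictionaries --- which sub-factors of $\kappa(\pi)$ carry $\AR(\pi)$ in part~(i), and the exact parity bookkeeping (including the ``begins with a descent / parity of the first letter'' correction) in part~(ii). Once the functional equations are correct, the rest is elimination: a kernel-method step followed by a resultant in part~(i), and a resultant or Gr\"obner-basis elimination over $\mathbb{Q}(x,y)$ in part~(ii). These computations are large enough that I would certify the displayed equations~\eqref{alg:gf1} and~\eqref{alg:gf2}, and their stated degrees, by computer algebra, cross-checking against~\eqref{eq:BBS}, the Catalan specialisation, and small-$n$ enumeration.
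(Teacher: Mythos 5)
Your overall skeleton coincides with the paper's: for (i) transfer $G$ to Dyck paths via Krattenthaler's bijection and eliminate a parity-refined polynomial system by computer algebra; for (ii) use \eqref{eo:lpk} to move to $\S_n(231)$, decompose recursively, and eliminate. However, both of your combinatorial dictionaries contain genuine errors. In (i), cutting $\psi(\pi)$ at its valleys produces the platform multiset $\PT(\psi(\pi))$, which by Lemma~\ref{lem:Kra}(a) records the non-excedance data (equivalently, left-chain orders under $\tau$), not the ascending runs. The correct dictionary is forced by Lemma~\ref{lem:Kra}(b): descents of $\pi$ correspond to the \emph{penultimate} east steps of long platforms, so one must cut the path \emph{before the last east step of each long platform}; the east-step counts of the resulting segments form $\Comp(D)$, and this composition is $\AR(\pi)$. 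Moreover, the paper needs no catalytic height variable or kernel method: the essential (and delicate) step is refining by the parities of the first and last parts of $\Comp(D)$, giving four series $EE,EO,OE,OO$ with $EO=OE$ via reverse-complement, after which the first-return decomposition closes into the polynomial system \eqref{eq:ee}, \eqref{eq:eo}, \eqref{eq:o} and Gr\"obner elimination yields \eqref{alg:gf1}. Your plan leaves exactly this case analysis --- the bulk of the proof --- unspecified, and it is not clear a height-catalytic formulation would land on a degree-$4$ equation.

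In (ii), your decomposition of a $231$-avoiding permutation is backwards. If $\pi=\alpha\,n\,\beta$ avoids $231$, then every entry of $\alpha$ must be \emph{smaller} than every entry of $\beta$ (an entry of $\alpha$ exceeding an entry of $\beta$ would form a $231$ together with $n$); so $\alpha$ is already a permutation of $\{1,\dots,|\alpha|\}$ and it is $\beta$ whose values are shifted. The structure you describe, with the top block before $n$, characterizes $132$-avoiding permutations. Consequently your parity bookkeeping is attached to the wrong factor: it is $\beta$'s left-peak parities that flip when $|\alpha|$ is odd, and it is $\beta$'s first value (being preceded by $n$) that loses its left-peak status, so $\beta$ must be enumerated by peaks rather than left peaks. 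The paper instead uses the first-letter decomposition of Lemma~\ref{dec:231} and closes the system by jointly tracking left-peak and peak enumerators split by the parity of the length ($LE,LO,E,O$, with $\widetilde{LE}(t,x,y)=LE(t,y,x)$ implementing the parity swap), yielding \eqref{eq:LE} and then \eqref{alg:gf2} by elimination. Your alternative state (begins-with-a-descent plus the parity of the first letter) could be made to close, but as written the system derived from the reversed decomposition would be wrong; the consistency checks you propose (Catalan and \eqref{eq:BBS} specializations, small $n$) are sound and would have exposed both errors.
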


The rest of this paper is organized as follows. Section~\ref{sec:bj} is devoted to the constructions of the bijections  $\Upsilon$, $\vartheta$ and $\Psi$, as well as some other related bijections. Utilizing two bijections presented in  Section~\ref{sec:bj}, we prove Theroem~\ref{thm:gf} in Section~\ref{sec:gf}.

\section{Bijections}\label{sec:bj}
In this section, we present several bijections in the garden of Catalan numbers, some of which will also be useful in computing generating functions for two enumerators of $321$-avoiding permutations in next section. Theorems~\ref{thm:sch},~\ref{bij:pt-231} and~\ref{bij:LPK} will be proved, as well as an interpretation of the Catalan--Schett polynomials in terms of $321$-avoiding permutations.

\subsection{The bijection $\Upsilon$ from stack-sortable permutations to binary trees}
The construction of  $\Upsilon$ is based on a  decomposition of stack-sortable permutations which was first used in~\cite{Lin2}. 

For $\sigma\in\S_k$ and $\pi\in\S_l$, 
 define the {\em direct sum} of $\sigma$ and $\pi$, denoted $\sigma\oplus\pi$, to be a permutation in $\S_{k+l}$ whose $i$-th letter is
$$
(\sigma\oplus \pi)_i=
\begin{cases}
\sigma_i, &\text{for $1\leq i\leq k$};\\
\pi_{i-k}+k, &\text{for $k+1\leq i\leq k+l$}.
\end{cases}
$$
For example, we have $312\oplus2143=312\magenta{5476}$. The following decomposition of stack-sortable permutations will be used frequently throughout this paper. 
\begin{lemma}[Lin-Wang-Zhao~\cite{Lin2}]\label{dec:231}
For any $\pi\in\S_n(231)$ with $\pi_1=k$, $\pi$ can be decomposed as $\pi=(k\cdot\pi')\oplus\pi''$, where $\pi'\in\S_{k-1}(231)$ and $\pi''\in\S_{n-k}(231)$. 
\end{lemma}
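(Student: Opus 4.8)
The plan is to prove Lemma~\ref{dec:231} directly from the definition of $231$-avoidance, peeling off the first letter and analyzing its role as a ``barrier'' in the permutation. Write $\pi=\pi_1\pi_2\cdots\pi_n\in\S_n(231)$ with $\pi_1=k$. The first step is to observe that every value in $\{1,2,\dots,k-1\}$ must occur in $\pi$ \emph{before} every value in $\{k+1,k+2,\dots,n\}$: indeed, if some $a<k$ appeared after some $b>k$, then the subsequence $\pi_1\,b\,a=k\,b\,a$ would have pattern $231$ (since $a<k<b$), a contradiction. Hence, deleting $\pi_1=k$, the remaining word splits into an initial block consisting of a permutation of $\{1,\dots,k-1\}$ followed by a final block consisting of a permutation of $\{k+1,\dots,n\}$; calling these blocks $\pi'$ (after standardizing to $\S_{k-1}$) and $\pi''$ (after subtracting $k$ to land in $\S_{n-k}$), we get precisely $\pi=(k\cdot\pi')\oplus\pi''$.

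The second step is to check that $\pi'$ and $\pi''$ are themselves $231$-avoiding. This is immediate because any occurrence of the pattern $231$ inside $\pi'$ or inside $\pi''$ would be an occurrence of $231$ inside $\pi$ (pattern containment is inherited by subsequences, and standardizing or shifting all values by a constant does not change the relative order). So $\pi'\in\S_{k-1}(231)$ and $\pi''\in\S_{n-k}(231)$, which is the claim. Conversely — if the lemma is meant to be read as a bijective decomposition, which is how it will be used — one should also note that for any choice of $k\in[n]$, $\pi'\in\S_{k-1}(231)$ and $\pi''\in\S_{n-k}(231)$, the permutation $(k\cdot\pi')\oplus\pi''$ does lie in $\S_n(231)$: a putative $231$-pattern cannot use $\pi_1=k$ as its ``2'' unless its ``1'' comes from the $\pi''$ block (impossible, since $\pi''$ holds only values $>k$) or its ``1'' and ``3'' both come after, and within the concatenated tail $\pi'\pi''$ every value of $\pi'$ is less than every value of $\pi''$, so no ``3'' (large) can precede a ``1'' (small) across the blocks, and within a single block we invoke $231$-avoidance of that block.

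The main — and really the only — obstacle is the first step: getting the ``all small values precede all large values'' separation exactly right, including the edge cases $k=1$ (then $\pi'$ is the empty permutation in $\S_0$ and $\pi=1\oplus\pi''$) and $k=n$ (then $\pi''$ is empty and $\pi=n\cdot\pi'$ with $\pi'\in\S_{n-1}(231)$). Once that separation is in hand, everything else is a routine appeal to the hereditary nature of pattern containment. I would present the argument in three short moves: (1) the separation of small and large values forced by $\pi_1=k$; (2) heredity giving $\pi',\pi''\in\S(231)$; (3) if needed, the converse closure check. Since the paper cites this as Lemma~\ref{dec:231} from~\cite{Lin2} and uses it repeatedly, I would keep the proof to a few lines and emphasize the separation property, as that is the structural fact the later bijections $\Upsilon$, $\vartheta$ and $\Psi$ will lean on.
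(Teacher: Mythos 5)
Your proof is correct. Note that the paper itself gives no proof of this lemma: it is quoted from Lin--Wang--Zhao~\cite{Lin2} and used as a black box, so there is no in-paper argument to compare against. Your argument is the standard one and is sound: the separation step (if some $a<k$ appeared after some $b>k$, then $k\,b\,a$ would form a $231$ pattern, since $a<k<b$ gives relative order $2$-$3$-$1$) is exactly the structural fact needed, and heredity of pattern avoidance under taking subsequences handles $\pi'$ and $\pi''$. Your converse check, that $(k\cdot\pi')\oplus\pi''$ is always $231$-avoiding for $231$-avoiding $\pi'$, $\pi''$, goes beyond the stated lemma but is what makes the decomposition a bijection $\S_n(231)\simeq\bigsqcup_k \S_{k-1}(231)\times\S_{n-k}(231)$, which is implicitly relied on when the paper inverts the recursive constructions of $\Upsilon$ and $\vartheta$; including it is a reasonable strengthening rather than a gap.
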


We call the decomposition of $\pi$ as $(k\cdot\pi')\oplus\pi''$ in Lemma~\ref{dec:231} the {\em first letter decomposition} of $\pi$.
For example, $\pi=3125476\in\S_7(231)$ is decomposed as $\pi=(3\cdot12)\oplus2143$.

Next, we need a transformation $T\mapsto T^*$ on a class of special binary trees, namely these trees whose root has only left branch. Given such a binary tree $T$ with root $r$ and the (only)  left child of $r$ is $r'$, then let $T^*$ be the binary tree obtained from $T$ by first cutting the right branch of $r'$ and then attaching it as a right branch at $r$. Note that the node $r'$ in $T^*$ has no right branch. 

\begin{figure}
\centering
\begin{tikzpicture}[scale=0.24]

\node at (20,20){$\bullet$};\draw[-] (20,20) to (11,11);
\node at (20,20.9){$r$};\node at (17,18){$r'$};
\node at (17,17){$\bullet$};\node at (14,14){$\bullet$};\node at (11,11){$\bullet$};
\draw[-] (11,11) to (13,8);\draw[-] (14,14) to (17,11);\draw[-] (14,8) to (17,11);
\node at (13,8){$\bullet$};\node at (17,11){$\bullet$};\node at (14,8){$\bullet$};

\magenta{\draw[-] (17,17) to (23,11);
\node at (20,14){$\bullet$};
\node at (23,11){$\bullet$};
\draw[-] (17,5) to (23,11);
\node at (20,8){$\bullet$};
\node at (17,5){$\bullet$};
\draw[-] (20,8) to (23,5);
\node at (23,5){$\bullet$};
\draw[-] (23,5) to (20,2);
\node at (20,2){$\bullet$};
}
\node at (17,17){$\bullet$};

\node at (27,13){$\mapsto$};
\node at (40,20){$\bullet$};\draw[-] (40,20) to (31,11);
\node at (40,20.9){$r$};\node at (37,18){$r'$};
\node at (37,17){$\bullet$};\node at (34,14){$\bullet$};\node at (31,11){$\bullet$};
\draw[-] (31,11) to (33,8);\draw[-] (34,14) to (37,11);\draw[-] (34,8) to (37,11);
\node at (33,8){$\bullet$};\node at (37,11){$\bullet$};\node at (34,8){$\bullet$};

\magenta{\draw[-] (40,20) to (46,14);
\node at (43,17){$\bullet$};
\node at (46,14){$\bullet$};
\draw[-] (40,8) to (46,14);
\node at (43,11){$\bullet$};
\node at (40,8){$\bullet$};
\draw[-] (43,11) to (46,8);
\node at (46,8){$\bullet$};
\draw[-] (46,8) to (43,5);
\node at (43,5){$\bullet$};
}
\node at (40,20){$\bullet$};
\end{tikzpicture}
\caption{An example of the transformation  $T\mapsto T^*$. \label{transfor}}
\end{figure}

{\bf The construction of $\Upsilon$.} The mapping $\Upsilon: \S_n(231)\rightarrow \B_n$ will be constructed recursively according to two cases. Given $\pi\in\S_n(231)$, if $\pi=1$, then $\Upsilon(\pi)$ is the unique binary tree with only one node. Otherwise, by Lemma~\ref{dec:231}, $\pi=(k\cdot\pi')\oplus\pi''$, where $k=\pi_1$, $\pi'\in\S_{k-1}(231)$ and $\pi''\in\S_{n-k}(231)$. We distinguish two cases according to $\pi'$ is empty or not. 
\begin{itemize}
\item[(1)] \underline{$\pi'$ is empty, i.e., $k=1$}. Then define $\Upsilon(\pi)$ to be the tree obtained by attaching $\Upsilon(\pi'')$ as the right branch of a new node $r$. Note that $r$ is the root of $\Upsilon(\pi)$ and $r$ has no left branch. 
\item[(2)] \underline{$\pi'$ is nonempty}. We perform the following two steps: 
\begin{itemize}
\item[i)] Construct the tree $T$ by attaching $\Upsilon(\pi')$ as the left branch of a new node $r$. Suppose that $r'$ is the left child of the root $r$ in $T$. 
\item[ii)] Note that in $T^*$, the node $r'$ (left child of the root $r$) has no right branch.  Now, $\Upsilon(\pi)$ is obtained from $T^*$ by attaching $\Upsilon(\pi'')$ at $r'$ as right branch. In this case, the root $r$ of $\Upsilon(\pi)$ has left child. 
\end{itemize}
\end{itemize}
See Fig.~\ref{ex:upsilon} for an example of $\Upsilon$.
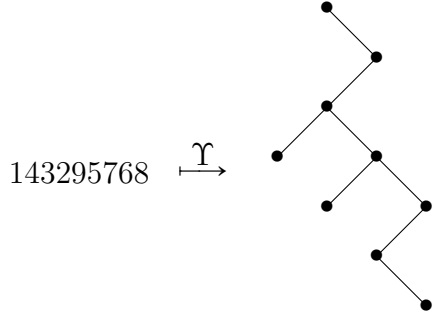
\begin{figure}
\centering
\begin{tikzpicture}[scale=0.22]
\node at (0,10){$143295768$};\draw[-] (15,20) to (18,17);\draw[-] (12,11) to (18,17);
\node at (7.5,10){$\longmapsto$};\node at (15,20){$\bullet$};\node at (18,17){$\bullet$};
\node at (7.5,11.2){$\Upsilon$};
\node at (15,14){$\bullet$};\node at (12,11){$\bullet$};
\draw[-] (15,14) to (21,8);\node at (18,11){$\bullet$};\draw[-] (18,11) to (15,8);
\node at (15,8){$\bullet$};
\node at (21,8){$\bullet$};\draw[-] (21,8) to (18,5);\draw[-] (21,2) to (18,5);
\node at (18,5){$\bullet$};\node at (21,2){$\bullet$};
\end{tikzpicture}
\caption{An example of $\Upsilon$. \label{ex:upsilon}}
\end{figure}
\begin{lemma}\label{bij:upsi}
The mapping  $\Upsilon$ is a bijection between $\S_n(231)$ and $\B_n$. 
\end{lemma}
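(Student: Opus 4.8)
The plan is to argue by induction on $n$, constructing an explicit inverse $\Upsilon^{-1}\colon\B_n\to\S_n(231)$ and verifying that the two maps are mutually inverse. The base case $n=1$ is immediate. The first thing to record for the inductive step is that the two cases in the construction of $\Upsilon$ produce \emph{disjoint} families of trees: in case (1) the root of $\Upsilon(\pi)$ has no left child, while in case (2) it does, since the left child $r'$ is the root of the nonempty tree $\Upsilon(\pi')$ and survives as the left child of the root both under $T\mapsto T^*$ and under the final attachment of $\Upsilon(\pi'')$. Hence, given $T\in\B_n$ with $n\ge2$, we can unambiguously decide which case produced it by checking whether its root has a left child. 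The second auxiliary fact is that $T\mapsto T^*$ is a bijection from the set of binary trees whose root has a unique child, which is a left child, onto the set of binary trees whose root has a left child $r'$ such that $r'$ has no right child; its inverse just moves the right subtree of the root back to become the right subtree of $r'$. This is the only structural fact needed and it is immediate from the definition.

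With these in hand, I would define $\Upsilon^{-1}(T)$ for $T\in\B_n$, $n\ge2$, as follows. If the root $r$ of $T$ has no left child, let $R$ be its right subtree (which has $n-1$ nodes), set $\pi''=\Upsilon^{-1}(R)$ by induction, and output $1\oplus\pi''$; one checks directly that $\Upsilon(1\oplus\pi'')$ executes exactly case (1) and returns $T$, and conversely that $\Upsilon^{-1}$ undoes case (1). If $r$ has a left child $r'$, let $R''$ be the right subtree of $r'$ (possibly empty, with $n_2$ nodes), put $\pi''=\Upsilon^{-1}(R'')$ by induction, delete $R''$ from $T$ to obtain a tree in which $r'$ has no right child --- this is the tree $T^{*}$ of the construction --- apply the inverse of $T\mapsto T^*$ to get a tree $S$ whose root has only a left subtree, the latter having $m:=n-n_2-1$ nodes, set $\pi'=\Upsilon^{-1}(\text{left subtree of the root of }S)$ by induction, and output $(k\cdot\pi')\oplus\pi''$ with $k=m+1$. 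Since $\pi'\in\S_{k-1}(231)$ and $\pi''\in\S_{n-k}(231)$, Lemma~\ref{dec:231} guarantees $(k\cdot\pi')\oplus\pi''\in\S_n(231)$, and a node count ($n=1+(k-1)+(n-k)$ in either case, with $T\mapsto T^*$ preserving the number of nodes) confirms that $\Upsilon$ itself maps $\S_n(231)$ into $\B_n$.

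Tracing the construction of $\Upsilon$ in case (2) then shows that it recovers $T$ from this permutation, and vice versa: the crucial point is that in $T^{*}$ the node $r'$ has no right branch, so $\Upsilon(\pi'')$ occupies a clean slot there and can be stripped off unambiguously, after which undoing $T\mapsto T^*$ exposes $\Upsilon(\pi')$ as the left subtree of the root. Combining the case dichotomy (disjointness of the images of (1) and (2), and exhaustiveness for $n\ge2$) with the fact that the displayed $\Upsilon^{-1}$ is a genuine two-sided inverse gives the claim. As a remark, one could shorten this by proving only that $\Upsilon$ is injective --- again by induction using the same case dichotomy --- and then invoking $|\S_n(231)|=C_n=|\B_n|$; I prefer the explicit inverse since it will also be convenient when tracking the set-valued statistics in~\eqref{dr:ar}. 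The main obstacle is not conceptual but bookkeeping: one must verify that in $\Upsilon(\pi)$ the three constituents $\Upsilon(\pi')$, $\Upsilon(\pi'')$ and the branch relocated by $T\mapsto T^*$ remain separately identifiable, so that the first letter decomposition of $\pi$ is faithfully encoded; once the reversibility of $T\mapsto T^*$ and the disjointness of the two cases are in place, the rest is routine.
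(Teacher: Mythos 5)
Your proposal is correct and follows essentially the same route as the paper's proof: induction on $n$, the case dichotomy according to whether the root of $T$ has a left child, and the reversibility of the transformation $T\mapsto T^*$. The paper states this argument very tersely, while you make the inverse map explicit, but the underlying idea is identical.
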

\begin{proof}
Given $T\in\B_n$, if the root of $T$ has no left child, then $T$ is the image under the first case of $\Upsilon$. Otherwise, the root of $T$ has  left child and $T$ is the image under the second case of $\Upsilon$.
As the transformation  $T\mapsto T^*$ is reversible, we see that the mapping  $\Upsilon$ is a bijection between $\S_n(231)$ and $\B_n$ by induction on $n$. 
\end{proof}
 For $\pi\in\S_n$, define 
\begin{itemize}
\item $\idr(\pi)$, the length of the {\em initial descending run} of $\pi$;
\item $\iar(\pi)$,  the length of the {\em initial ascending run} of $\pi$.
\end{itemize}
For $T\in\B_n$, the left (resp., right) chain starting from the root
of $T$ is called the {\em left} (resp.,~{\em right}) {\em arm} of $T$ and we denote by $\larm(T)$ (resp., $\rarm(T)$) the   order of the left (resp.,~right) arm of $T$.
The bijection $\Upsilon$ has the following important  feature. 
\begin{lemma}
Let $\pi\in\S_n(231)$ and $T=\Upsilon(\pi)$. Then, 
\begin{equation}\label{idr:iar}
\idr(\pi)=\larm(T)\quad\text{and}\quad\iar(\pi^{-1})=\rarm(T).
\end{equation}
\end{lemma}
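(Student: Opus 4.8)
The plan is to prove the two identities in~\eqref{idr:iar} together, by induction on $n$, threading the first letter decomposition $\pi=(k\cdot\pi')\oplus\pi''$ (with $k=\pi_1$) through the two-case recursive definition of $\Upsilon$. The base case $n=1$ is clear. For $n\geq 2$, write $T:=\Upsilon(\pi)$ and read off from the construction how the two arms behave. In case~(1), where $k=1$, the root $r$ of $T$ has no left child and carries $\Upsilon(\pi'')$ as its right subtree, so at once $\larm(T)=1$ and $\rarm(T)=1+\rarm(\Upsilon(\pi''))$. In case~(2), where $k\geq 2$, let $r$ be the root of $T$ and $r'$ its left child; by construction $r'$ is the root of $\Upsilon(\pi')$ and its left subtree is inherited unchanged from $\Upsilon(\pi')$, so the left arm of $T$ is $r$ prepended to the entire left arm of $\Upsilon(\pi')$, giving $\larm(T)=1+\larm(\Upsilon(\pi'))$. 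For the right arm, note that the surgery $\,\cdot\mapsto(\cdot)^{*}$ in step~(ii) simply moves the right subtree $R'$ of the root of $\Upsilon(\pi')$ from beneath $r'$ to beneath $r$, and the final grafting of $\Upsilon(\pi'')$ at $r'$ occurs off the right arm; hence the right arm of $T$ is $r$ followed by the right arm of $R'$ when $R'\neq\emptyset$, and is just $r$ otherwise — in either case $\rarm(T)=\rarm(\Upsilon(\pi'))$.

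On the permutation side, the statistic $\idr$ is read directly off $\pi$. If $k=1$ then $\pi_1=1<\pi_2$, so $\idr(\pi)=1$. If $k\geq 2$ then $\pi_1=k>\pi'_1=\pi_2$; the initial descending run of $\pi$ then runs exactly through the initial descending run of the block $\pi'$ and cannot spill into the block $\pi''$, all of whose entries exceed $k$, so $\idr(\pi)=1+\idr(\pi')$. For $\iar(\pi^{-1})$ I would look at the sequence of positions $\pi^{-1}_1,\pi^{-1}_2,\dots$, whose initial ascending run has length $\iar(\pi^{-1})$. Under the decomposition the values $1,\dots,k-1$ occupy positions $2,\dots,k$ with $\pi^{-1}_m=1+(\pi')^{-1}_m$, the value $k$ sits at position $1$, and the values $k+1,\dots,n$ occupy positions $k+1,\dots,n$. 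When $k=1$, the entry $\pi^{-1}_1=1$ lies below all later positions, and for $m\geq 2$ one has $\pi^{-1}_m<\pi^{-1}_{m+1}$ iff $(\pi'')^{-1}_{m-1}<(\pi'')^{-1}_{m}$, so $\iar(\pi^{-1})=1+\iar((\pi'')^{-1})$. When $k\geq 2$, the entry $\pi^{-1}_k=1$ is smaller than $\pi^{-1}_{k-1}\geq 2$, which cuts the ascending run off before index $k$, while on indices $1,\dots,k-1$ the comparisons coincide with those of $(\pi')^{-1}$; hence $\iar(\pi^{-1})=\min\bigl(k-1,\iar((\pi')^{-1})\bigr)=\iar((\pi')^{-1})$, the last equality because $\pi'\in\S_{k-1}$.

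It then remains only to combine these recursions with the induction hypothesis, applied to the strictly shorter permutations $\pi''$ in case~(1) and $\pi'$ in case~(2). In case~(1), $\idr(\pi)=1=\larm(T)$ and $\iar(\pi^{-1})=1+\iar((\pi'')^{-1})=1+\rarm(\Upsilon(\pi''))=\rarm(T)$; in case~(2), $\idr(\pi)=1+\idr(\pi')=1+\larm(\Upsilon(\pi'))=\larm(T)$ and $\iar(\pi^{-1})=\iar((\pi')^{-1})=\rarm(\Upsilon(\pi'))=\rarm(T)$. The only points that call for more than routine bookkeeping are verifying that $\rarm$ is unchanged by the $\,\cdot\mapsto(\cdot)^{*}$ surgery and the subsequent grafting at $r'$, and noticing in the $k\geq 2$ case that the value $k$ occupying position $1$ forces $\iar(\pi^{-1})\leq k-1$; everything else is direct inspection of the decomposition and the construction.
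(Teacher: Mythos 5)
Your proposal is correct and follows essentially the same route as the paper: induction on $n$ via the first letter decomposition $\pi=(k\cdot\pi')\oplus\pi''$, tracking in each of the two cases of $\Upsilon$ how $\idr$, $\iar(\cdot^{-1})$, $\larm$ and $\rarm$ transform (your recursions match the paper's exactly). You simply spell out in more detail the verifications — invariance of $\rarm$ under the $*$-surgery and the analysis of $\pi^{-1}$ — that the paper states without elaboration.
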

\begin{proof}
We prove this lemma by induction on $n$. Suppose that the first letter decomposition of $\pi$ is $(k\cdot\pi')\oplus\pi''$. Let $T'=\Upsilon(\pi')$ and $T''=\Upsilon(\pi'')$. If $\pi'$ is empty, then $T$ is the tree obtained by attaching $\Upsilon(\pi'')$ as the right branch of a new node, thus 
$$
\idr(\pi)=1=\larm(T)\quad\text{and}\quad\iar(\pi^{-1})=\iar((\pi'')^{-1})+1=\rarm(T'')+1=\rarm(T),
$$
where the penultimate equality follows by induction. If $\pi'$ is not empty, then 
\begin{equation}\label{eq:case2}
\idr(\pi)=\idr(\pi')+1\quad\text{and}\quad\iar(\pi^{-1})=\iar((\pi')^{-1}).
\end{equation}
By the second case of the construction of $T$ from $T'$ and $T''$, we have 
$$
\larm(T)=\larm(T')+1\quad\text{and}\quad\rarm(T)=\rarm(T'),
$$
which comparing with~\eqref{eq:case2} leads to~\eqref{idr:iar} by induction.  This completes the proof of the lemma. 
\end{proof}
We are now ready to prove Theorem~\ref{thm:sch}.
\begin{proof}[{\bf Proof of Theorem~\ref{thm:sch}}]
In view of Lemma~\ref{bij:upsi}, it remains to show that $\Upsilon$ satisfies~\eqref{dr:ar}, namely
$$
(\DR(\pi),\AR(\pi^{-1}))=(\LC(T),\RC(T)).
$$ 
We proceed to prove this by induction on $n$. 

Suppose that the first letter decomposition of $\pi$ is $(k\cdot\pi')\oplus\pi''$, $T'=\Upsilon(\pi')$ and $T''=\Upsilon(\pi'')$. If $\pi'$ is empty, then
$$
\DR(\pi)=\{1\}+\DR(\pi'')
$$
and
$$\AR(\pi^{-1})=\AR((\pi'')^{-1})-\{\iar((\pi'')^{-1})\}+\{\iar((\pi'')^{-1})+1\}.
$$
Here ``$+$'' and ``$-$'' are multiset plus and minus. In this case, $T$ is the tree obtained by attaching $\Upsilon(\pi'')$ as the right branch of a new node, thus
$$
\LC(T)=\{1\}+\LC(T'')
$$
and 
$$
\RC(T)=\RC(T'')-\{\rarm(T'')\}+\{\rarm(T'')+1\}.
$$
The desired property~\eqref{dr:ar} in this case then follows by induction and~\eqref{idr:iar}. 

If $\pi'$ is not empty, then
$$
\DR(\pi)=\DR(\pi')-\{\idr(\pi')\}+\{\idr(\pi')+1\}+\DR(\pi'')
$$
and
$$\AR(\pi^{-1})=\AR((\pi')^{-1})+\AR((\pi'')^{-1})-\{\iar((\pi'')^{-1})\}+\{\iar((\pi'')^{-1})+1\}.
$$
On the other hand, $T$ is constructed from $T'$ and $T''$ in the second case of $\Upsilon$, from which we see
$$
\LC(T)=\LC(T')-\{\larm(T')\}+\{\larm(T')+1\}+\LC(T'')
$$
and 
$$
\RC(T)=\RC(T')+\RC(T'')-\{\rarm(T'')\}+\{\rarm(T'')+1\}.
$$
It then follows by induction and~\eqref{idr:iar} that property~\eqref{dr:ar} holds when $\pi'$ is not empty. This completes the proof of the theorem. 
\end{proof}

\subsection{A bijection from stack-sortable permutations to pairs of non-intersecting walks}

 A {\em walk} is a lattice path in the grid $\N^2$  starts at the origin and consists of {\em north} step $N=(1,0)$ and {\em east} step $E=(0,1)$. The {\em size} of a walk is one plus the number of steps it contains, i.e., the number of lattice points it contains. A pair of walks $(\mu,\nu)$ with the same endpoint is said to be {\em non-intersecting} if $\mu$ is weakly above $\nu$. Denote by $\PW_n$ the set of non-intersecting walk pairs of size $n$. In this subsection, we construct a natural bijection from stack-sortable permutations to pairs of non-intersecting walks that will be useful in next subsection. 
 
For a walk $\mu$, let $\H(\mu)$ (resp., $\V(\mu)$)  denote the set of all positions of the east (resp., north) steps of $\mu$. 
 For example, if $\mu$ is the top walk in Fig.~\ref{vie}, then $\H(\mu)=\{2,3,6,8\}$ and $\V(\mu)=\{1,4,5,7\}$.
 
 \begin{theorem}\label{thm:theta}
 There is a bijection $\Theta: \pi\mapsto(\mu,\nu)$ between $\S_n(231)$ and $\PW_n$ such that 
 \begin{equation}\label{da:hv}
 (\DES(\pi),\ASC(\pi^{-1}))=(\H(\nu),\V(\mu)).
 \end{equation}
 \end{theorem}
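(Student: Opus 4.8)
The plan is to build $\Theta$ recursively using the same first letter decomposition (Lemma~\ref{dec:231}) that underlies $\Upsilon$, exploiting the natural correspondence between binary trees and pairs of non-intersecting walks. Concretely, given $\pi \in \S_n(231)$ with first letter decomposition $\pi = (k\cdot\pi')\oplus\pi''$, I would set up $\Theta$ so that the walk pair $(\mu,\nu)$ is assembled from $\Theta(\pi') = (\mu',\nu')$ and $\Theta(\pi'') = (\mu'',\nu'')$. The base case $\pi = 1$ maps to the unique walk pair of size $1$ (a single lattice point, empty walks). When $\pi'$ is empty ($k=1$), one prepends a step to record that $\pi_1 = 1$ is a descent-free left-anchored block: this should prepend an $N$ step to $\nu$ (creating a new descent position at the front, since $\pi_1 = 1 < \pi_2$ would be an ascent — wait, actually I must track $\DES(\pi)$ vs $\V(\mu)$ carefully) and adjust $\mu$ accordingly; when $\pi'$ is nonempty, the block $k\cdot\pi'$ contributes a descent at position $1$, which should lengthen the initial east-run of $\nu$, and the interaction of $\pi'$ with $\pi''$ across the $\oplus$ seam contributes to $\V(\mu)$ via $\ASC(\pi^{-1})$.

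The cleanest route, rather than reinventing the recursion, is to \emph{factor $\Theta$ through $\Upsilon$}: there is a standard bijection $\Xi: \B_n \to \PW_n$ sending a binary tree $T$ to a non-intersecting walk pair such that the left-chain structure of $T$ becomes the east/north step data of the lower walk $\nu$ and the right-chain structure becomes that of the upper walk $\mu$ — precisely, $\LC(T) = \M(\H(\nu)^c) $ type correspondences that turn chain-order multisets into run-length multisets. I would define $\Theta := \Xi \circ \Upsilon$. Then by Theorem~\ref{thm:sch} we already have $(\DR(\pi), \AR(\pi^{-1})) = (\LC(T), \RC(T))$, so it remains only to check that $\Xi$ converts $(\LC(T),\RC(T))$ — equivalently the set-valued data $\ASC(\pi)$, $\DES(\pi^{-1})$ — into $(\H(\nu), \V(\mu))$ at the level of \emph{positions}, not just multisets. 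Since $\DR(\pi) = \M(\ASC(\pi))$ records run lengths while $\H(\nu)$ records step positions, and $\M$ is the passage from a subset of $[n-1]$ to its gap multiset, I need $\Xi$ to be defined so that the left spine of $T$, read root-to-leaf recording at each node whether the next edge is left or right, spells out the $E/N$ pattern of $\nu$ read from the endpoint back to the origin (and dually $\mu$ from the right spine). The non-intersecting condition "$\mu$ weakly above $\nu$" then corresponds to the binary-tree constraint that every node has a well-defined position, which is automatic.

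The main obstacle will be pinning down the exact step-position bookkeeping so that \eqref{da:hv} holds on the nose — in particular getting the orientation conventions (which walk is $\mu$, which is $\nu$; whether positions are counted from the origin or the endpoint; whether a left edge in $T$ becomes an $N$ or an $E$ step) consistent between the two coordinates, and verifying that the $\oplus$-seam in the first letter decomposition lands at the correct lattice point where $\mu'$ concatenates with $\mu''$ and $\nu'$ with $\nu''$. I expect this amounts to an induction on $n$ mirroring the proof of Theorem~\ref{thm:sch}: in the case $\pi'$ empty, check that prepending a node with only a right child to $\Upsilon(\pi'')$ translates under $\Xi$ to the correct single-step prepend to one walk and the correct run-extension in the other, matching $\DES(\pi) = \{1\} + (\DES(\pi'')\!+\!1)$ (shifted) and the identity for $\ASC(\pi^{-1})$ used already; in the case $\pi'$ nonempty, check that the transformation $T \mapsto T^*$ followed by grafting $\Upsilon(\pi'')$ corresponds to concatenating walk pairs with the initial-run adjustments dictated by \eqref{idr:iar}. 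Once the conventions are fixed, each case is a routine unwinding of definitions, so I would state $\Xi$ explicitly, prove it is a bijection $\B_n \to \PW_n$ (easy, by reading off spines), and then get \eqref{da:hv} for free from Theorem~\ref{thm:sch} together with the position-level refinement of $\Xi$.
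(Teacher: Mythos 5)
Your reduction has a genuine gap at its key step: you plan to ``get \eqref{da:hv} for free from Theorem~\ref{thm:sch} together with the position-level refinement of $\Xi$,'' but Theorem~\ref{thm:sch} is a statement about the \emph{multisets} $(\DR(\pi),\AR(\pi^{-1}))=(\LC(T),\RC(T))$, and a multiset of run lengths does not determine the underlying set of positions: the map $S\mapsto\M(S)$ is not injective (already for $n=3$, $S=\{1\}$ and $S=\{2\}$ give the same multiset $\{1,2\}$). So your parenthetical ``equivalently the set-valued data $\ASC(\pi)$, $\DES(\pi^{-1})$'' is false, and no refinement of $\Xi$ \emph{alone} can bridge this: to prove \eqref{da:hv} for a composite $\Xi\circ\Upsilon$ you would need a position-level (set-valued) property of $\Upsilon$ itself, which is not stated in Theorem~\ref{thm:sch} and would have to be proved by a fresh induction. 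That induction is exactly where the difficulty sits, because the surgery $T\mapsto T^*$ in the second case of $\Upsilon$ relocates an entire right subtree from $r'$ to $r$, scrambling the preorder positions that any traversal-based $\Xi$ reads off. A quick sanity check shows the naive choice $\Xi=\Vie$ fails: $\Upsilon(12)$ is the root with a single right child, so $\Vie(\Upsilon(12))$ consists of two one-step walks that are east steps, giving $\H(\nu)=\{1\}$ while $\DES(12)=\emptyset$. Fixing this is not a matter of orientation conventions that works uniformly; it is the substance of the proof. Moreover, your description of $\Xi$ (reading only the left and right spines) cannot be a bijection $\B_n\to\PW_n$ as stated, since the spines have far fewer than $n-1$ edges and so cannot spell out the full $E/N$ patterns of walks of size $n$.

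For comparison, the paper does not route $\Theta$ through $\Upsilon$ at all: it sets $\Theta=\Vie\circ\varphi$, where $\varphi$ is the classical bijection built on the \emph{greatest} letter decomposition $(\a,n,\b)$ and $\Vie$ is the Pr\'eville-Ratelle--Viennot bijection defined by a full bilateral preorder traversal (not spines). Under that decomposition the walk pair concatenates cleanly, $\nu=\nu_L\cdot\bar{N}E\cdot\nu_R$ and $\mu=\mu_L\cdot\bar{N}\cdot\mu_R\cdot\bar{E}$, so the positions of east steps track $\DES(\pi)$ and $\ASC(\pi^{-1})$ directly, and \eqref{da:hv} follows by a three-case induction with no multiset-to-set passage and no tree surgery. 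If you want to keep your plan, you must either (i) formulate and prove a set-valued analogue of Theorem~\ref{thm:sch} for $\Upsilon$ (tracking $\DES$ and $\ASC(\pi^{-1})$ through both cases, including $T\mapsto T^*$), or (ii) abandon $\Upsilon$ in favor of a decomposition under which the walks concatenate, as the paper does.
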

 
 The bijection $\Theta$ is the composition of the classical bijection $\varphi$ between $\S_n(231)$ and $\B_n$ (see~\cite[Sec.~1.5]{EC1}) and the bijection $\Vie$ from $\B_n$ to $\PW_n$ due to Pr\'{e}ville-Ratalle and Viennot~\cite{Vie}.  
 
The bijection $\varphi$ is defined recursively.  It is well known that each $\pi\in\S_n(231)$ with $\pi_{k+1}=n$ for some $0\leq k<n$ can be decomposed into a pair $(\a,\b)$, where 
 $$
 \a=\pi_1\pi_2\cdots\pi_{k}\in\S_{k}(231) 
 $$
and
$$
\b=(\pi_{k+2}-k)(\pi_{k+3}-k)\cdots(\pi_n-k)\in\S_{n-k-1}(231).
$$
The triple $(\a,n,\b)$ will be called the {\em greatest letter decomposition} of $\pi$. 
With this decomposition of $\pi$,  $\varphi(\pi)$ is defined to be the binary tree whose root has $\varphi(\a)$ as left branch and  $\varphi(\b)$ as right branch. See Fig.~\ref{vie} for an example of $\varphi$.

\begin{figure}
\centering
\begin{tikzpicture}
[inner sep=.6mm, level distance=1cm,
  level 1/.style={sibling distance=4cm},
  level 2/.style={sibling distance=2cm},
  level 3/.style={sibling distance=1.25cm},
  level 4/.style={sibling distance=0.5cm},
 level 4/.style={sibling distance=0.25cm},
]
 \node[circle,draw,label=above:{\large 9}](9){}
    child{node[circle,draw,label=above:{\large 4}](4){}
      child{node[circle,draw,label=below:{\large 1}](1){}}
      child{node[circle,draw,label=above:{\large 3}](3){}
        child[missing] {node {}}
        child{node[circle,draw,label=below:{\large 2}](2){}}
      }
    }
    child{node[circle,draw,label=above:{\large 8}](8){}
      child{node[circle,draw,label=above:{\large 7}](7){}
        child{node[circle,draw,label=below:{\large 5}](5){}
          child[missing]{node{}}
          child[missing] {node {}}
        }
        child{node[circle,draw,label=below:{\large 6}](6){}
          child[missing] {node {}}
          child[missing] {node {}}
        }
      }
      child[missing] {node {}}
    };

\draw (4)--(3)  node[red, midway,below left,font=\small] {$E$} node[red, midway,above right,font=\small] {$\bar{E}$};
\draw (3)--(2)  node[red, midway,below left,font=\small] {$E$} node[red, midway, above right,font=\small] {$\bar{E}$};
\draw  (9)--(8) node[red, midway,below left,font=\small] {$E$} node[red,midway,above right,font=\small] {$\bar{E}$};
\draw(7)--(6)  node[red,pos=0.8, below left] {$E$} node[red, midway,above right,font=\small] {$\bar{E}$};

\draw (9) -- (4) node[blue, midway,above left,font=\small] {$N$} node[blue, midway,below right,font=\small] {$\bar{N}$};
\draw (4)--(1) node[blue, midway,above left,font=\small] {$N$} node[ blue,midway, right,font=\small] {$\bar{N}$};
\draw (8)--(7) node[blue, midway,above left,font=\small] {$N$} node[blue, midway,below right,font=\small] {$\bar{N}$};
\draw (7)--(5) node[blue, midway,above left,font=\small] {$N$} node[blue, midway, right,font=\small ] {$\bar{N}$};

\node[below=4cm,text=orange] at (9) {$\mu=\bar{N} \bar{E} \bar{E} \bar{N} \bar{N} \bar{E} \bar{N} \bar{E}$};
\node[below=4.5cm,text=green] at (9) {$\nu=\bar{N}EE\bar{N}E\bar{N}E\bar{N}$};

\node [right=1.1cm,] at ($(7)!0.5!(8)$) {$\longmapsto$};
\node [right=1cm] at ($(7)!0.8!(8)$) {$\Vie$};

\node [left=1cm,] at ($(1)!0.5!(4)$) {$\longmapsto$};
\node [left=1.5cm,] at ($(1)!0.8!(4)$) {$\varphi$};
\node [left=2cm,] at ($(1)!0.5!(4)$) {$\pi=143295768$};
\begin{scope}[scale=0.7, xshift=50mm, yshift=-50mm]
\draw [green] (0,0) -- (0,1) -- (1,1) -- (2,1) -- (2,2) -- (3,2) -- (3,3) -- (4,3) -- (4,4);
\foreach \x/\y in {0/0, 0/1, 1/1, 2/1, 2/2, 3/2, 3/3, 4/3, 4/4} {
    \filldraw [black] (\x,\y) circle (2pt);
}  

    \begin{scope}[scale=1.05, xshift=-2.5mm, yshift=1.5mm]
           \draw [orange] (0,0) -- (0,1) -- (1,1) -- (2,1) -- (2,2) -- (2,3) -- (3,3) -- (3,4) -- (4,4);
        \foreach \x/\y in {0/0, 0/1, 1/1, 2/1, 2/2, 2/3, 3/3, 3/4, 4/4} {
        \filldraw [purple] (\x,\y) circle (2pt);
       }
    \end{scope}
\end{scope}

\end{tikzpicture}
\caption{An example of the bijection  $\Theta=\Vie\circ\varphi$. \label{vie}}
\end{figure}
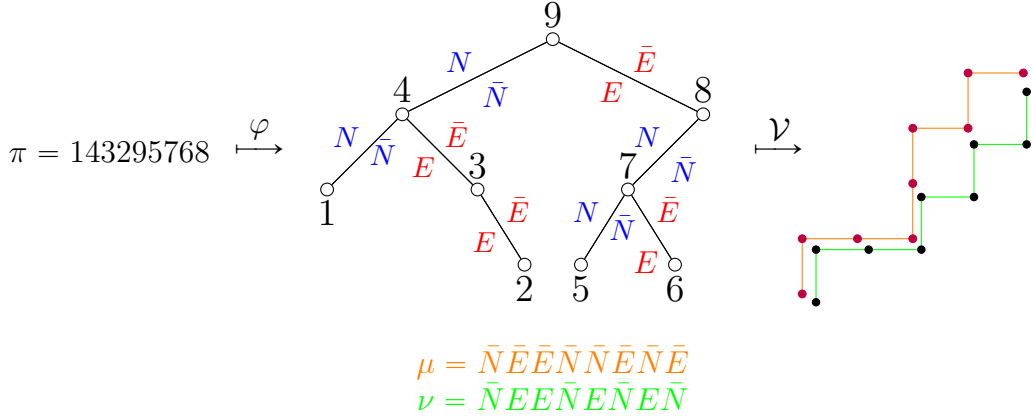

Next we recall the construction of $\Vie$.  For a binary tree $T \in \B_n$, traverse $T$ in preorder and record the edge bilaterally  as follows: label  $N$ (resp.,~{$\bar{N}$}) when traversing a left edge for the first (resp.,~{second}) time, and label $E$ (resp.,~{$\bar{E}$}) when traversing a right edge for the first (resp.,~{second}) time. 
Associate a pair of words $(\mu,\nu)$ through this traversal,  where $\mu$ keeps track only of the two letters $\{\bar{N},\bar{E}\}$ and $\nu$ keeps track only on $\{\bar{N},E\}$. See Fig.~\ref{vie} for an example where $(\mu,\nu)=(\bar{N} \bar{E} \bar{E} \bar{N} \bar{N} \bar{E} \bar{N} \bar{E}, \bar{N}EE\bar{N}E\bar{N}E\bar{N})$. 
Define $\Vie(T)$ to be the pair of walks in $\PW_n$, where the top  walk is encoded by $\mu$, while the bottom walk is encoded by $\nu$: 
\begin{itemize}
\item  the $i$-th step of top walk is east step $\iff$ $\mu_i=\bar{E}$; 
\item the $i$-th step of bottom walk is east step $\iff$ $\nu_i=E$.
\end{itemize}
It is convenience to write $\Vie(T)=(\mu,\nu)$. 
Pr\'{e}ville-Ratalle and Viennot~\cite{Vie} showed that $\Vie: \B_n\rightarrow\PW_n$ is a bijection. 
Now define $\Theta=\Vie\circ\varphi$. For an example of $\Theta$, see Fig.~\ref{vie}.

\begin{proof}[{\bf Proof of Theorem~\ref{thm:theta}}]
To complete  the proof of Theorem~\ref{thm:theta}, it remains to show~\eqref{da:hv}.
For  $\pi\in \S_n(231)$, assume that its greatest letter decomposition is $(\a,n,\b)$ with $\a\in\S_k(231)$ and $\b\in\S_{n-k-1}(231)$. Suppose that $\Theta(\pi)=(\mu,\nu)$, $\Theta(\a)=(\mu_L,\nu_L)$ and $\Theta(\b)=(\mu_R,\nu_R)$. We proceed to show~\eqref{da:hv} by induction on $n$ and distinguish the following three cases.
\begin{itemize}
\item If $\b$ is empty, then
$$
\DES(\pi)=\DES(\a)\quad\text{and}\quad\ASC(\pi^{-1})=\ASC(\alpha^{-1})\cup\{n-1\}. 
$$
As $\varphi(\pi)$ is constructed by attaching $\varphi(\a)$ as left branch to a new node, we have 
$$
\nu=\nu_L\cdot\bar{N}\quad\text{and}\quad\mu=\mu_L\cdot\bar{N}. 
$$
Thus,~\eqref{da:hv} holds by induction  in this case.
\item  If $\a$ is empty, then
$$
\DES(\pi)=\{1\}\cup\{i+1: i\in\DES(\b)\}\quad\text{and}\quad\ASC(\pi^{-1})=\ASC(\b^{-1}). 
$$
As $\varphi(\pi)$ is constructed by attaching $\varphi(\a)$ as right branch to a new node, we have 
$$
\nu=E\cdot\nu_R\quad\text{and}\quad\mu=\mu_R\cdot\bar{E}. 
$$
Thus,~\eqref{da:hv} holds by induction in this case.
\item If both $\a$ and $\b$ are non-empty, then 
$$
\DES(\pi)=\DES(\a)\cup\{k+1\}\cup\{i+k+1: i\in\DES(\b)\} 
$$
and 
$$
\ASC(\pi^{-1})=\ASC(\a^{-1})\cup\{k\}\cup\{i+k:i\in\ASC(\b^{-1})\}.
$$
In this case, $\varphi(\pi)$ is  the binary tree whose root has $\varphi(\a)$ as left branch and  $\varphi(\b)$ as right branch. Then,
$$
\nu=\nu_L\cdot\bar{N}E\cdot\nu_R\quad\text{and}\quad\mu=\mu_L\cdot\bar{N}\cdot\mu_R\cdot\bar{E}.
$$
As $\H(\nu)=\{i\in[n-1]:\nu_i=E\}$ and $\V(\mu)=\{i\in[n-1]:\mu_i=\bar{E}\}$, the desired property~\eqref{da:hv} then follows by induction.
\end{itemize}
As~\eqref{da:hv} is true in the above three cases, the proof of the theorem is complete. 
\end{proof}

A permutation $\pi\in\S_n$ is a {\em Baxter permutation} if it avoids the {\em vincular patterns} (see~\cite{Kit2}) $2\underline{41}3$ and $3\underline{14}2$, i.e., there are no indices $1\leq i<j<j+1<k\leq n$ such that  
$$
\pi_{j+1}<\pi_i<\pi_k<\pi_j \quad\text{or}\quad \pi_{j}<\pi_k<\pi_i<\pi_{j+1}.
$$
Denote by $\Bax_n$ the set of all Baxter permutations in $\S_n$. The following observation is obvious. 
\begin{lemma}\label{lem:Bax}
All stack-sortable permutations are Baxter permutations, i.e., $\S_n(231)\subseteq\Bax_n$ for $n\geq1$. 
\end{lemma}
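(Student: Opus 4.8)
The plan is to ignore the vincular (adjacency) structure entirely and argue at the level of classical patterns: I will show that every classical occurrence of $2413$ contains an occurrence of $231$, and likewise for $3142$, so that a $231$-avoiding permutation cannot contain either of the classical patterns $2413$ or $3142$, hence a fortiori cannot contain the vincular patterns $2\underline{41}3$ or $3\underline{14}2$.

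First I would unwind the definition of $\Bax_n$ as given: $\pi$ fails to be Baxter precisely when there are indices $1\le i<j<j+1<k\le n$ with $\pi_{j+1}<\pi_i<\pi_k<\pi_j$ (an occurrence of $2\underline{41}3$) or with $\pi_j<\pi_k<\pi_i<\pi_{j+1}$ (an occurrence of $3\underline{14}2$). Suppose, toward a contradiction, that $\pi\in\S_n(231)$ is not Baxter. In the first case, consider the three indices $i<j<j+1$: the chain $\pi_{j+1}<\pi_i<\pi_j$ shows that $\pi_i\pi_j\pi_{j+1}$ is order-isomorphic to $231$, contradicting $\pi\in\S_n(231)$. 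In the second case, consider the three indices $i<j+1<k$: the chain $\pi_k<\pi_i<\pi_{j+1}$ shows that $\pi_i\pi_{j+1}\pi_k$ is order-isomorphic to $231$, again a contradiction. Therefore $\pi$ must be Baxter, and since $n\ge 1$ was arbitrary we get $\S_n(231)\subseteq\Bax_n$.

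I do not expect any real obstacle here; the lemma is essentially an observation, and the only point requiring a little care is to select, inside each forbidden four-term configuration, the correct three-term subsequence that realizes $231$ — namely dropping the ``$3$'' (position $k$) in the $2\underline{41}3$ case and dropping the ``$1$'' (position $j$) in the $3\underline{14}2$ case. One could phrase the whole argument even more tersely by noting that the classical pattern $2413$ has $241\sim 231$ as its prefix of length three and that $3142$ has $342\sim 231$ as the subsequence in positions $1,3,4$, and that $2\underline{41}3$ and $3\underline{14}2$ are refinements of these classical patterns; I would likely include this one-line remark as well, since it makes transparent why the vincular constraint plays no role.
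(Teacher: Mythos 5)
Your proof is correct: in each forbidden configuration the chosen three positions ($i,j,j+1$ in the $2\underline{41}3$ case, $i,j+1,k$ in the $3\underline{14}2$ case) indeed form an occurrence of $231$, so a $231$-avoiding permutation satisfies the (weaker) vincular avoidance conditions. The paper gives no proof at all, declaring the lemma an obvious observation, and your verification is precisely the intended argument, so there is nothing to compare beyond noting that you have written out what the authors left implicit.
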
 

A conjectured bijection, which has been proved recently by the first two authors~\cite{Lin3}, relating three descent-based statistics on Baxter permutations to east steps of non-intersecting triples of walks was proposed by Dilks.

\begin{definition}
For a permutation $\pi\in\S_n$, define the following two set-valued extensions of the descent statistic as:
\begin{itemize}
          \item $\widetilde\DT(\pi)=\{\pi_{i}-1:\pi_{i}>\pi_{i+1}\}\subseteq [n-1]$, the set of all {\bf\em modified descent tops} of $\pi$;
          \item $\DB(\pi)=\{\pi_{i+1}:\pi_{i}>\pi_{i+1}\}$, the set of all {\bf\em descent bottoms} of $\pi$.
\end{itemize}
For example, if $\pi=143295768$, then $\widetilde\DT(\pi)=\{2,3,6,8\}$ and $\DB(\pi)=\{2,3,5,6\}$.
\end{definition}
For a subset $S\subseteq[n-1]$, a walk of size $n$ is said to {\em encode $S$} if for any $1\leq i\leq n-1$, 
$$
i\in S\Leftrightarrow  \text{the $i$-th step of this walk is $E$}. 
$$
Given a Baxter permutation $\pi\in\Bax_n$, define $\Gamma(\pi)$ to be the triple of non-intersecting walks of size $n$, where 
\begin{itemize}
\item the bottom walk encodes $\DB(\pi^{-1})$;
\item the middle walk encodes $\DES(\pi)$;
\item and the top walk encodes $\widetilde{\DT}(\pi^{-1})$. 
\end{itemize}
\begin{theorem}[Lin and Liu~\cite{Lin3}]\label{thm:Lin3}
The map $\Gamma$ is a bijection between $\Bax_n$ and triples of non-intersecting walks of size $n$.
\end{theorem}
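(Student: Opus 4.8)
The plan is to prove the theorem recursively, by realizing $\Gamma$ as an isomorphism of generating trees. Recall the classical ECO construction of Baxter permutations: every $\pi\in\Bax_n$ is obtained from a unique $\widehat\pi\in\Bax_{n-1}$ by inserting the value $n$ at one of its \emph{active sites}, and this process realizes the succession rule with root $(1,1)$ and productions
\[
(a,b)\rightsquigarrow(1,b+1)(2,b+1)\cdots(a,b+1)(a+1,1)(a+1,2)\cdots(a+1,b).
\]
On the other side, a triple of non-intersecting walks of size $n$ is obtained from one of size $n-1$ by extending all three walks simultaneously by one lattice point while keeping them non-intersecting, and this ECO structure is known to realize the \emph{same} succession rule, with the label $(a,b)$ recording the two gaps between consecutive walks at their common endpoint. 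It therefore suffices to prove that $\Gamma$ sends the unique Baxter permutation of $[1]$ to the trivial triple of one-point walks, that $\widehat\pi$ and $\Gamma(\widehat\pi)$ always carry the same label, and that inserting $n$ at the $j$-th active site of $\widehat\pi$ corresponds, under $\Gamma$, to the $j$-th admissible extension of the triple $\Gamma(\widehat\pi)$. Induction on $n$ then yields simultaneously that $\Gamma$ is well defined (each image is a genuine non-intersecting triple with a common endpoint) and that it is bijective.

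The core of the argument is the compatibility of $\Gamma$ with the productions, which amounts to bookkeeping how the three set-statistics react to the insertion of the largest letter. Inserting $n$ at position $i$ of $\widehat\pi$ creates a descent of $\pi$ at $i$, shifts the later descents up by one, and cancels the descent of $\widehat\pi$ at $i-1$ when there is one, so the change in $\DES(\pi)$, hence in the middle walk, is immediate. The delicate part concerns the top and bottom walks, since $\widetilde\DT$ and $\DB$ are statistics of $\pi^{-1}$: inserting the value $n$ into $\widehat\pi$ places the letter $n$ at a prescribed position of $\widehat\pi^{\,-1}$ (determined by where $n$ landed in $\pi$), a tightly constrained modification of the inverse word, and one checks that the induced changes to $\DB(\pi^{-1})$ and $\widetilde\DT(\pi^{-1})$ are exactly the moves prescribed for the bottom and top walks, in the matching order as $j$ runs over the active sites. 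The same computation shows that the three walks never cross — a crossing of the middle walk with the top (resp., bottom) walk over some prefix would produce a $3\underline{14}2$ (resp., $2\underline{41}3$) pattern in $\pi$ — and that $\des(\pi)=\des(\pi^{-1})$ for every Baxter permutation, which is what forces the three walks to share an endpoint.

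The main obstacle is exactly this simultaneous tracking of three interdependent descent-type statistics through the insertion of the largest letter, together with the translation between ``active sites of $\pi$'', which are naturally phrased in terms of $\pi$, and ``admissible extensions of the walk triple'', two of whose walks are governed by $\pi^{-1}$; one must rule out off-by-one discrepancies and verify that the left-to-right order of the active sites matches the order in which the succession rule lists the admissible extensions. An alternative that offloads part of this is to factor $\Gamma$ through Dulucq--Guibert's bijection from Baxter permutations to pairs of twin binary trees, apply the Viennot map $\Vie$ of the excerpt to each tree, and glue the two resulting pairs of walks along a common middle walk (the twin condition being precisely what makes the inner walks of the two pairs coincide); one would then still have to identify the descent data read off the glued triple with $\DES(\pi)$, $\widetilde\DT(\pi^{-1})$ and $\DB(\pi^{-1})$, which is the same flavor of statistic-chasing. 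As a cross-check, note that $|\Bax_n|$ equals the number of non-intersecting walk triples of size $n$ — both are the $n$-th Baxter number, the latter by the Lindström--Gessel--Viennot lemma — so well-definedness together with injectivity alone would already force bijectivity.
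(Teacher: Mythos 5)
The first thing to note is that the paper does not prove this statement at all: Theorem~\ref{thm:Lin3} is imported verbatim from the reference \cite{Lin3}, where Dilks' bijectivity conjecture is established by encoding the three statistics through Fran\c{c}on--Viennot/Foata--Zeilberger-type correspondences (restricted Laguerre histories), not by a generating-tree argument. So the relevant comparison is with \cite{Lin3}, and your proposed route is genuinely different from the one actually used there.

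Judged as a proof, however, your proposal has a real gap: it is a strategy outline in which the entire content of the theorem sits exactly in the step you yourself flag as ``the delicate part'' and then dispatch with ``one checks that the induced changes to $\DB(\pi^{-1})$ and $\widetilde\DT(\pi^{-1})$ are exactly the moves prescribed for the bottom and top walks.'' Precisely this simultaneous control of $\DES(\pi)$, $\DB(\pi^{-1})$ and $\widetilde\DT(\pi^{-1})$ under insertion of the largest letter is what kept Dilks' conjecture open, and nothing in the proposal carries it out: you do not describe the admissible extensions of a non-intersecting triple or prove that triples of walks realize the Baxter succession rule with the two gaps as labels (this is asserted as ``known'' without reference), you do not compute the label of $\Gamma(\widehat\pi)$, and you do not verify that the left-to-right order of active sites matches the order in which the extensions are listed --- the very off-by-one issues you acknowledge are left unresolved. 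Similarly, the claims that a crossing of two of the walks would force a $3\underline{14}2$ or $2\underline{41}3$ pattern, and that $\des(\pi)=\des(\pi^{-1})$ for every Baxter permutation (needed for the three walks to share an endpoint), are exactly the well-definedness assertions that must be proved rather than invoked; neither is obvious, and neither is argued. Your closing observation that both sides are counted by the Baxter numbers (via Lindstr\"om--Gessel--Viennot), so that well-definedness plus injectivity would suffice, is correct and would genuinely lighten the load, but injectivity of $\Gamma$ is also not established. In short, the proposal correctly locates the difficulty but proves none of it.
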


\begin{lemma}\label{lem:DB}
For any $\pi\in\S_n(231)$, $\DB(\pi^{-1})=\DES(\pi)$. 
\end{lemma}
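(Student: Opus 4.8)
The plan is to prove the set equality \emph{elementwise}: for every $i\in[n-1]$ I will show that $i\in\DES(\pi)$ if and only if $i\in\DB(\pi^{-1})$. Note first that both sides are subsets of $[n-1]$, since a value equal to $n$ can never be a descent bottom of any permutation. The first step is purely formal. Unwinding the definition, a value $v$ lies in $\DB(\pi^{-1})$ exactly when $v=(\pi^{-1})_{m+1}$ with $(\pi^{-1})_m>(\pi^{-1})_{m+1}$ for some $m\in[n-1]$. The equation $(\pi^{-1})_{m+1}=v$ forces $m+1=\pi_v$ (in particular $\pi_v\geq 2$), and then $(\pi^{-1})_m=(\pi^{-1})_{\pi_v-1}$ is precisely the position of the value $\pi_v-1$ in $\pi$. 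Hence membership of $v$ in $\DB(\pi^{-1})$ is equivalent to: $\pi_v\geq 2$ and the value $\pi_v-1$ occurs at a position greater than $v$ in $\pi$. Thus the lemma reduces to proving, for each $i\in[n-1]$,
\[
\pi_i>\pi_{i+1}\quad\Longleftrightarrow\quad \pi_i\geq2\ \text{ and the value }\pi_i-1\text{ occurs at a position greater than }i.
\]

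The two implications are where $231$-avoidance enters, each through a one-line contradiction. For ($\Rightarrow$), assume $\pi_i>\pi_{i+1}$, so $\pi_i\geq2$, and let $q$ be the position of the value $\pi_i-1$; clearly $q\neq i$. If $q<i$, then from $\pi_{i+1}\neq\pi_i-1=\pi_q$ and $\pi_{i+1}<\pi_i$ we get $\pi_{i+1}<\pi_q<\pi_i$, so the positions $q<i<i+1$ carry the values $\pi_{i+1}<\pi_q<\pi_i$, an occurrence of $231$, contradicting $\pi\in\S_n(231)$. Hence $q>i$. For ($\Leftarrow$), assume $\pi_i\geq2$ and that $\pi_i-1$ sits at a position $q>i$; if $\pi_i<\pi_{i+1}$ then $q\neq i+1$ (otherwise $\pi_{i+1}=\pi_i-1<\pi_i$), so $q\geq i+2$, and the positions $i<i+1<q$ carry the values $\pi_q=\pi_i-1<\pi_i<\pi_{i+1}$, again an occurrence of $231$. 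Hence $\pi_i>\pi_{i+1}$, which completes the equivalence.

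I do not anticipate a genuine obstacle: the only point requiring care is the bookkeeping of positions, namely ruling out $q=i$ (respectively $q=i+1$) so that one is entitled to read off a bona fide $231$ pattern on three \emph{distinct} positions. An alternative route would be an induction on $n$ using the first letter decomposition $\pi=(k\cdot\pi')\oplus\pi''$ of Lemma~\ref{dec:231}, tracking how $\DES(\pi)$ and $\DB(\pi^{-1})$ transform under direct sums and under prepending the value $k$; but the direct pattern argument above is shorter and self-contained.
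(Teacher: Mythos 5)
Your proof is correct. It shares its first half with the paper: both arguments start from the same unwinding of the definition, namely that $i\in\DB(\pi^{-1})$ exactly when the letter $\pi_i-1$ occurs to the right of $\pi_i$ in $\pi$, and both use $231$-avoidance to get the inclusion $\DB(\pi^{-1})\subseteq\DES(\pi)$ (the paper phrases this as: if $\pi_i-1=\pi_j$ with $j>i$ then every entry strictly between positions $i$ and $j$ is smaller than $\pi_i$, forcing $i\in\DES(\pi)$). The two proofs diverge on the reverse inclusion. The paper does not argue it elementwise at all; instead it concludes equality of the two sets from the cardinality chain $|\DB(\pi^{-1})|=\des(\pi^{-1})=\des(\pi)=|\DES(\pi)|$, thereby invoking the fact that a $231$-avoiding permutation and its inverse have the same number of descents. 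You prove the reverse inclusion directly, by a second short $231$-pattern contradiction (if $\pi_i-1$ lies to the right of position $i$ but $\pi_i<\pi_{i+1}$, then positions $i<i+1<q$ form a $231$ occurrence), taking care to rule out the degenerate positions $q=i$ and $q=i+1$. What each approach buys: the paper's argument is shorter once the identity $\des(\pi)=\des(\pi^{-1})$ on $\S_n(231)$ is granted, while yours is entirely self-contained and elementary, and as a by-product it re-derives that identity rather than relying on it.
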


\begin{proof}
Note that $i\in\DB(\pi^{-1})$ iff the letter $\pi_i-1$ appears to the right of $\pi_i$ in $\pi$, i.e., $\pi_i-1=\pi_j$ for some $j>i$. As $\pi$ is $231$-avoiding, we have $\pi_k<\pi_i$ for any $i<k\leq j$, which forces $i\in\DES(\pi)$ whenever $i\in\DB(\pi^{-1})$. The lemma then follows from the fact that $|\DB(\pi^{-1})|=\des(\pi^{-1})=\des(\pi)=|\DES(\pi)|$ for any $\pi\in\S_n(231)$. 
\end{proof}

By Lemmas~\ref{lem:Bax} and~\ref{lem:DB}, the bijection $\Gamma$ restricts  to a bijection between $\S_n(231)$ and $\PW_n$ (as the bottom walk coincides with the middle walk). Therefore, the following  result is a consequence of Theorem~\ref{thm:theta}. 

\begin{corollary}
The composition $\Gamma^{-1}\circ\Theta: \pi\mapsto \sigma$ is a bijection on $\S_n(231)$ that sends the pair $(\DES(\pi),\DES(\pi^{-1}))$ to $(\DES(\sigma),\widetilde\DT(\sigma^{-1}))$. 
\end{corollary}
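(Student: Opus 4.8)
The plan is to read off the corollary directly from Theorems~\ref{thm:theta} and~\ref{thm:Lin3} together with the two lemmas immediately preceding it. First I would note that by Lemma~\ref{lem:Bax} we have $\S_n(231)\subseteq\Bax_n$, so $\Gamma$ is defined on all of $\S_n(231)$; and by Lemma~\ref{lem:DB}, for $\pi\in\S_n(231)$ the walk encoding $\DB(\pi^{-1})$ coincides with the walk encoding $\DES(\pi)$, i.e. the bottom walk of $\Gamma(\pi)$ equals its middle walk. Hence $\Gamma(\pi)$ is really a pair of non-intersecting walks (the doubled bottom/middle walk, together with the top walk), so $\Gamma$ restricts to a bijection $\S_n(231)\to\PW_n$ in which the ``merged'' walk encodes $\DES(\pi)$ and the top walk encodes $\widetilde\DT(\pi^{-1})$. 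This identification of $\Gamma$'s restriction with a map to $\PW_n$ is the one nonobvious point, and it is exactly what Lemma~\ref{lem:DB} supplies.

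Next I would invoke Theorem~\ref{thm:theta}: $\Theta$ is a bijection $\S_n(231)\to\PW_n$ with $(\DES(\pi),\ASC(\pi^{-1}))=(\H(\nu),\V(\mu))$, where $(\mu,\nu)=\Theta(\pi)$ is the pair with $\mu$ weakly above $\nu$. Here I must be careful to match the encoding conventions: in $\PW_n$ a walk is determined by the positions of its east steps, and ``encodes $S$'' means the east steps occur exactly at positions in $S$. Thus the bottom walk $\nu$ of $\Theta(\pi)$ encodes $\H(\nu)=\DES(\pi)$, and the top walk $\mu$ encodes $\V(\mu)=\ASC(\pi^{-1})$ as its set of north-step positions — equivalently $\H(\mu)=[n-1]\setminus\ASC(\pi^{-1})=\DES(\pi^{-1})$, so the top walk encodes $\DES(\pi^{-1})$. (I would double-check this complementation: $\H(\mu)\cup\V(\mu)=[n-1]$ and they are disjoint, and $\DES(\pi^{-1})=[n-1]\setminus\ASC(\pi^{-1})$.) Therefore, writing $\Theta(\pi)=(\mu,\nu)$ as an element of $\PW_n$, its bottom walk encodes $\DES(\pi)$ and its top walk encodes $\DES(\pi^{-1})$.

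Now I would form $\sigma:=\Gamma^{-1}(\Theta(\pi))$. Since $\Theta(\pi)\in\PW_n$, and the restriction of $\Gamma$ to $\S_n(231)$ maps onto $\PW_n$ bijectively, $\sigma$ is a well-defined element of $\S_n(231)$ and $\Gamma^{-1}\circ\Theta$ is a bijection on $\S_n(231)$. Applying $\Gamma$ to $\sigma$ recovers $\Theta(\pi)$, whose bottom walk encodes $\DES(\pi)$ and whose top walk encodes $\DES(\pi^{-1})$; on the other hand, by the definition of $\Gamma$ (specialized via Lemma~\ref{lem:DB}), the bottom walk of $\Gamma(\sigma)$ encodes $\DES(\sigma)$ and the top walk encodes $\widetilde\DT(\sigma^{-1})$. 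Comparing the two descriptions of the same walk pair, and using that a walk is determined by its east-step positions, gives $\DES(\sigma)=\DES(\pi)$ and $\widetilde\DT(\sigma^{-1})=\DES(\pi^{-1})$, which is precisely the claim that $\Gamma^{-1}\circ\Theta$ sends $(\DES(\pi),\DES(\pi^{-1}))$ to $(\DES(\sigma),\widetilde\DT(\sigma^{-1}))$.

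The only real subtlety — the ``main obstacle,'' though it is mild — is bookkeeping of conventions: which walk is on top, whether ``encode'' refers to east or north steps, and the complementation turning $\V(\mu)=\ASC(\pi^{-1})$ into $\H(\mu)=\DES(\pi^{-1})$. Once these are pinned down consistently with Theorems~\ref{thm:theta} and~\ref{thm:Lin3}, the corollary is a formal consequence with no further computation. I would present the proof in exactly this order: (1) $\Gamma$ restricts to $\S_n(231)\to\PW_n$ via Lemmas~\ref{lem:Bax} and~\ref{lem:DB}; (2) translate Theorem~\ref{thm:theta} into the statement that the top and bottom walks of $\Theta(\pi)$ encode $\DES(\pi^{-1})$ and $\DES(\pi)$ respectively; (3) set $\sigma=\Gamma^{-1}(\Theta(\pi))$ and read off the two set equalities by comparing walk encodings.
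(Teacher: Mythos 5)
Your proposal is correct and follows essentially the same route as the paper, which likewise derives the corollary by restricting $\Gamma$ to $\S_n(231)$ via Lemmas~\ref{lem:Bax} and~\ref{lem:DB} and then comparing with Theorem~\ref{thm:theta}. Your explicit bookkeeping (identifying the merged bottom/middle walk, and converting $\V(\mu)=\ASC(\pi^{-1})$ into $\H(\mu)=\DES(\pi^{-1})$ by complementation in $[n-1]$) is exactly the step the paper leaves implicit, and you handle it correctly.
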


\subsection{Krattenthaler's bijection from $321$-avoiding permutations to Dyck paths}
\label{sec:Krat}
A {\em Dyck path} of order $n$ is a lattice path in $\mathbb{N}^2$ from $(0,0)$ to $(n,n)$, consists of  {\em north} step $N=(1,0)$ and {\em east} step $E=(0,1)$, which does not pass above the line $y=x$.  Denote by $\D_n$  the set of all Dyck paths of order $n$. 
Given a Dyck path $D$, a {\em platform} of $D$ is a maximal sequence of consecutive east steps. Let $\PT(D)$ be the multiset of the lengths of all platforms of $D$. For the Dyck path $D$ in Fig.~\ref{dyck path}, we have 
$\PT(D)=\{4,2,1^3\}$.

We first recall a classical bijection  $\tau$ between $\B_n$ and $\D_n$ (see~\cite{DV}). Given  $T\in\B_n$, consider the {\em completion of $T$}, denoted  $\widetilde{T}$, which is a complete binary tree obtained from $T$ by adding $n+1$ edges so that all the initial nodes of $T$ become internal nodes. Then $\tau(T)$ is constructed by recording the steps when   $\widetilde{T}$ is traversed  in preorder: we record an east step when visiting a left edge and a north step when visiting a right edge; see Fig.~\ref{dyck path} for an example of $\tau$. The following property of $\tau$ is clear from the construction. 

\begin{lemma}\label{lem:PT}
For each $T\in\B_n$, $\LC(T)=\PT(\tau(T))$. 
\end{lemma}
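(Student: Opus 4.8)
The plan is to prove Lemma~\ref{lem:PT} by tracking what the preorder traversal of the completion $\widetilde{T}$ does along a left chain of $T$. Fix $T\in\B_n$ and let $C$ be a left chain of $T$, say with nodes $v_1,v_2,\dots,v_m$ where $v_{i+1}$ is the left child of $v_i$, so that $\LC(T)$ records the multiset of such values $m$ over all maximal left chains. The key observation is that in the preorder traversal of $\widetilde{T}$, once we arrive at $v_1$ (the top of the chain) we immediately descend through the left edges $v_1v_2,\dots,v_{m-1}v_m$ before doing anything else, since preorder visits the left subtree first; then at $v_m$, which has no left child in $T$, the completion $\widetilde{T}$ attaches a phantom left leaf, so the traversal takes one more left edge into that leaf. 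Each of these $m$ left edges is recorded as an east step, and these $m$ east steps occur consecutively. So a maximal left chain of order $m$ contributes a run of exactly $m$ consecutive east steps in $\tau(T)$.

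Next I would argue that these runs are precisely the maximal runs of east steps, i.e.\ the platforms, of $\tau(T)$, with no two runs coming from different chains getting merged. The point is that immediately after the traversal reaches $v_m$ and takes the phantom left edge (an east step), it must backtrack and take a right edge — either the right edge of $v_m$ in $\widetilde{T}$ (which exists, possibly as a phantom leaf), recorded as a north step — before any further left edges are traversed. Thus every maximal left chain's block of east steps is immediately followed by at least one north step, so distinct maximal left chains yield distinct platforms. Conversely, every left edge of $\widetilde{T}$ lies on a unique maximal left chain (a genuine left chain of $T$ possibly extended by one phantom leaf at its bottom), so every east step of $\tau(T)$ belongs to exactly one such block. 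This gives a bijection between maximal left chains of $T$ and platforms of $\tau(T)$ that preserves (order of chain) $=$ (length of platform), which is exactly $\LC(T)=\PT(\tau(T))$.

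I would make this rigorous either by the direct bookkeeping above or, perhaps more cleanly, by induction on $n$ using the recursive structure of $\tau$: writing $T$ as a root with left subtree $T_L$ and right subtree $T_R$, the traversal produces $\tau(T)=E\,\omega_L\,N\,\omega_R$ where $\omega_L,\omega_R$ encode the completions of $T_L,T_R$; here the leading $E$ is the root's left edge (into $T_L$ or its phantom leaf) and the $N$ after $\omega_L$ is the root's right edge. One then checks that prepending this $E$ increments by one the length of the first platform of $\omega_L$ (which corresponds, by the inductive hypothesis, to the order of the left arm of $T_L$), exactly matching the fact that the root's left chain is one longer than the left arm of $T_L$, while all other platforms — those of $\omega_L$ past the first, and those of $\omega_R$ — are untouched and correspond to the remaining left chains of $T_L$ and all left chains of $T_R$. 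Combined with $\LC(T)=\{\larm(T)\}\cup(\LC(T_L)\setminus\{\larm(T_L)\})\cup\LC(T_R)$ with the first-arm bookkeeping, the induction closes.

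The main obstacle is purely one of careful bookkeeping at the boundary: one must be precise about how the completion $\widetilde{T}$ treats the bottom node of a left chain (it always gets a phantom left leaf, which lengthens the corresponding platform by exactly one — but this is harmless since $\LC$ and $\PT$ are defined with the same convention via $\tau$), and one must verify that a platform never spans two different left chains, which reduces to the fact that in a preorder traversal every maximal run of left descents is immediately terminated by a right edge. Neither point is deep, but they are exactly the places where an imprecise argument would go wrong, so I would state them as explicit sub-claims. This is why the authors can reasonably call the result ``clear from the construction.''
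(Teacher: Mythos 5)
Your argument is correct and is exactly the direct verification that the paper leaves implicit when it asserts the lemma is ``clear from the construction'': a maximal left chain of order $m$ (its $m-1$ left edges plus the phantom left edge added at its bottom node in $\widetilde{T}$) is traversed as $m$ consecutive east steps in preorder, and each such block is delimited by north steps (the right edge of the chain's top being entered, and the right edge leaving its bottom node), so chains correspond bijectively to platforms with order equal to length. Since the paper provides no further proof, your write-up simply supplies the bookkeeping the authors deem obvious, and both your direct argument and the sketched induction via $\tau(T)=E\,\tau(T_L)\,N\,\tau(T_R)$ (with the strengthened first-platform/left-arm hypothesis) close correctly.
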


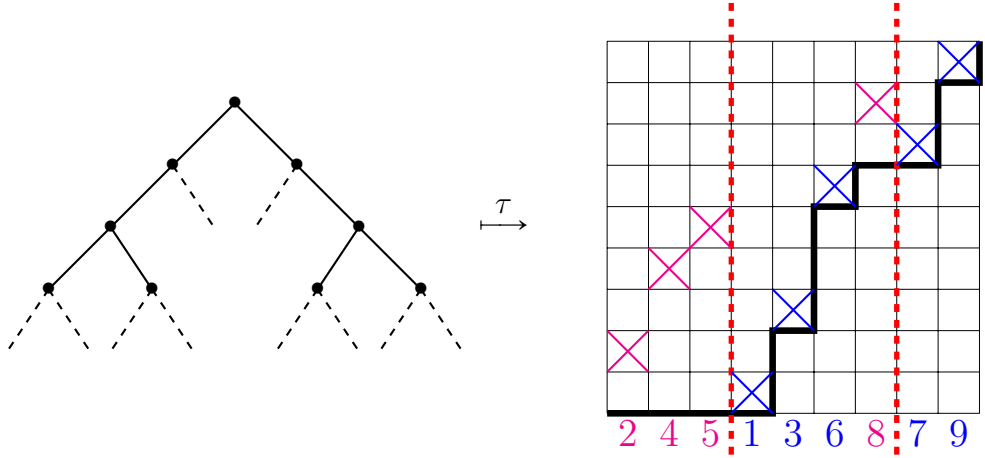
\begin{figure}
\begin{center}
\begin{tikzpicture}[scale=0.55]
\node at (-9,7.5){$\bullet$};\draw[thick,-] (-9,7.5) to (-4.5,3);
\node at (-7.5,6){$\bullet$};\draw[thick,dashed] (-7.5,6) to (-8.5,4.5);
\node at (-6,4.5){$\bullet$};\draw[thick,-] (-6,4.5) to (-7,3);
\node at (-7,3){$\bullet$};\draw[thick,dashed] (-7,3) to (-8,1.5);\draw[thick,dashed] (-7,3) to (-6,1.5);
\node at (-4.5,3){$\bullet$};\draw[thick,dashed] (-4.5,3) to (-5.5,1.5);
\draw[thick,dashed] (-4.5,3) to (-3.5,1.5);
\draw[thick,-] (-9,7.5) to (-13.5,3);
\node at (-10.5,6){$\bullet$};\draw[thick,dashed] (-10.5,6) to (-9.5,4.5);
\node at (-12,4.5){$\bullet$};\draw[thick,-] (-12,4.5) to (-11,3);
\node at (-11,3){$\bullet$};\draw[thick,dashed] (-11,3) to (-10,1.5);\draw[thick,dashed] (-11,3) to (-12,1.5);
\node at (-13.5,3){$\bullet$};\draw[thick,dashed] (-13.5,3) to (-12.5,1.5);
\draw[thick,dashed] (-13.5,3) to (-14.5,1.5);
\node at (-2.5,4.5){$\longmapsto$};
\node at (-2.5,5){$\tau$};
\draw[step=1cm,black,very thin] (0,0) grid (9,9);
\draw [line width=2.5pt,black] (0,0)--(4,0)--(4,2)--(5,2)--(5,5)--(6,5)--(6,6)--(8,6)--(8,8)--(9,8)--(9,9);
\draw [line width=2pt, red, dashed] (3,-1)--(3,10) (7,-1)--(7,10);
\draw(0.5,-0.5) node[scale=1.25,text=magenta]{$2$};
\draw(1.5,-0.5) node[scale=1.25,text=magenta]{$4$};
\draw(2.5,-0.5) node[scale=1.25,text=magenta]{$5$};
\draw(3.5,-0.5) node[scale=1.25,text=blue]{$1$};
\draw(4.5,-0.5) node[scale=1.25,text=blue]{$3$};
\draw(5.5,-0.5) node[scale=1.25,text=blue]{$6$};
\draw(6.5,-0.5) node[scale=1.25,text=magenta]{$8$};
\draw(7.5,-0.5) node[scale=1.25,text=blue]{$7$};
\draw(8.5,-0.5) node[scale=1.25,text=blue]{$9$};
\draw [thick, magenta] (0,2)--(1,1) (1,4)--(2,3) (2,5)--(3,4) (6,8)--(7,7);
\draw [ thick, magenta] (0,1)--(1,2) (1,3)--(2,4) (2,4)--(3,5) (6,7)--(7,8);
\draw [ thick, blue] (3,0)--(4,1) (4,2)--(5,3) (5,5)--(6,6) (7,6)--(8,7) (8,8)--(9,9);
\draw [ thick, blue] (3,1)--(4,0) (4,3)--(5,2) (5,6)--(6,5) (7,7)--(8,6) (8,9)--(9,8);
\end{tikzpicture}
\end{center}
\caption{An example of  $\psi\circ\tau$. \label{dyck path}}
\end{figure}

Next we recall Krattenthaler's bijection~\cite{Kra} $\psi$ between $\S_n(321)$ and $\D_n$. Given a permutation $\pi\in\S_n$, a position $i\in[n-1]$ is an {\em excedance}  of $\pi$ if $\pi_i >i$. It is known that a permutation is $321$-avoiding if and only if both the subsequence formed by letters in excedance positions and the one formed by the remaining letters are increasing. For $\pi\in\S_n(321)$, we represent $\pi$ on the $n\times n$ grid with crosses on the squares $(i,\pi_i)$; see Fig.~\ref{dyck path} for the representation of $\pi=245136879$. With this representation,  $\psi(\pi)$ is the Dyck path from the lower-left corner to the uper-right corner of the grid leaving all the crosses to the left and remaining always as close to the diagonal as possible. See  Fig.~\ref{dyck path} for an example of $\psi$. A platform of a Dyck path is {\em long} if it has length at least $2$.  The following properties  of $\psi$  can  be easily checked. 

\begin{lemma}\label{lem:Kra}
Let $\pi\in\S_n(321)$ and $D=\psi(\pi)$. Then,
\begin{itemize}
\item[(a)] $i$ is a non-excedance (i.e.,~$\pi_i>i$) of $\pi$  iff the $i$-th east step of $D$ is the last step of a platform. 
\item[(b)] $i$ is a descent of $\pi$ iff the $i$-th east step of $D$ is the penultimate step of a long platform. 
\end{itemize}
\end{lemma}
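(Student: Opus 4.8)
\textbf{Proof proposal for Lemma~\ref{lem:Kra}.}

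The plan is to analyze Krattenthaler's bijection $\psi$ directly from its geometric description as ``the lowest lattice path leaving all crosses to the left.'' First I would set up the basic dictionary: in the $n\times n$ grid, column $i$ contains a single cross at height $\pi_i$, and the defining property of $D=\psi(\pi)$ is that after its $i$-th east step the path is at the lowest height that still keeps the cross $(i,\pi_i)$ weakly to the left, subject to staying a Dyck path (never above the diagonal, i.e.\ never below the crosses-to-the-left constraint accumulated so far). Concretely, write $h_i$ for the height of $D$ immediately after its $i$-th east step; then $h_i = \max(i, \pi_i, h_{i-1})$ with $h_0=0$, since the path must lie weakly above all crosses $(j,\pi_j)$ with $j\le i$ and weakly above the diagonal point $(i,i)$, and ``as close to the diagonal as possible'' forces equality. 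From this formula, the $i$-th east step is the \emph{last} step of a platform exactly when $D$ makes at least one north step immediately after it, i.e.\ $h_{i+1}>h_i$ (or $i=n$, handled separately), while it is an \emph{interior} step of a platform when $h_{i+1}=h_i$.

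For part (a), I would argue that $i$ is a non-excedance of $\pi$ (meaning $\pi_i \le i$; note the parenthetical ``$\pi_i>i$'' in the statement is a typo for $\pi_i\le i$, since $i$ being a \emph{non}-excedance means not an excedance) iff the $i$-th east step is the last step of a platform. The key is that the subsequence of $\pi$ in non-excedance positions is increasing and consists of values $\le$ their positions, while the subsequence in excedance positions is increasing with values $>$ their positions; in particular, once we are past all crosses seen so far, the running maximum $h_i=\max(i,\pi_i,h_{i-1})$ can be tracked. If $i$ is an excedance, then $\pi_i>i$, and one checks that $h_i=\pi_i>i\ge h_{i-1}$ cannot happen in a way that forces a north step right after — more carefully, $h_{i+1}=\max(i+1,\pi_{i+1},h_i)$, and because excedance values are increasing we have $\pi_{i+1}>\pi_i$ when $i+1$ is also an excedance, or $i+1$ is a non-excedance; in either case the bookkeeping shows the path does \emph{not} turn, so the step is interior. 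Conversely if $i$ is a non-excedance, $\pi_i\le i$, so $h_i=\max(i,h_{i-1})$, and the next cross or the diagonal forces $h_{i+1}>h_i$. I would make this precise by a clean case split on whether $i$ and $i+1$ are excedances, using the two-increasing-subsequences characterization; this is the step I expect to be the main obstacle, since it requires carefully matching the greedy path behavior against the interleaving of the two increasing subsequences, and getting the boundary cases ($i=n$, consecutive excedances, the switch point between the two subsequences) exactly right.

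For part (b), building on (a): the $i$-th east step is the penultimate step of a \emph{long} platform iff both the $i$-th step is interior (not the last step of its platform, so by (a), $i$ is an excedance) and the $(i+1)$-st step is the last step of its platform (so by (a), $i+1$ is a non-excedance), and moreover the platform has length $\ge 2$, which is automatic here since it contains both step $i$ and step $i+1$. So the condition is: $i$ is an excedance and $i+1$ is a non-excedance, i.e.\ $\pi_i>i$ and $\pi_{i+1}\le i+1$. I would then show this is equivalent to $\pi_i>\pi_{i+1}$, i.e.\ $i\in\DES(\pi)$: if $\pi_i>i$ and $\pi_{i+1}\le i+1$ then since $\pi_{i+1}\ne \pi_i$ and both are $\le i+1 <\pi_i+1$ wait — one must use that $\pi_i\ge i+1$ (as $\pi_i>i$ forces $\pi_i\ge i+1$) and $\pi_{i+1}\le i+1$, giving $\pi_{i+1}\le i+1\le \pi_i$, hence $\pi_{i+1}<\pi_i$ (strict since distinct), so $i$ is a descent. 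For the converse, if $i\in\DES(\pi)$ with $\pi_i>\pi_{i+1}$: I claim $i$ must be an excedance — if not, $i$ is a non-excedance, and the next position $i+1$: the non-excedance subsequence is increasing so if $i+1$ is also a non-excedance then $\pi_{i+1}>\pi_i$, contradicting the descent; if $i+1$ is an excedance then $\pi_{i+1}>i+1>i\ge\pi_i$ (using $\pi_i\le i$), again contradicting $\pi_i>\pi_{i+1}$. So $i$ is an excedance. And $i+1$ must be a non-excedance, for if $i+1$ were an excedance then, since excedance values increase, we would need $i$ to be an excedance with $\pi_i<\pi_{i+1}$, contradicting the descent. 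This closes (b). Throughout, the only genuine content is the running-maximum formula for the path height and the two-increasing-subsequences structure of $321$-avoiding permutations; everything else is bookkeeping that I would present compactly with one or two illustrative references to Figure~\ref{dyck path}.
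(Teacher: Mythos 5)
Your reduction of (b) to (a) is fine: for $\pi\in\S_n(321)$ the equivalence ``$i$ is an excedance and $i+1$ is a non-excedance $\Leftrightarrow$ $i\in\DES(\pi)$'' is correct via the two-increasing-subsequence structure, and your reading of the parenthetical ``$\pi_i>i$'' as a typo for $\pi_i\le i$ is also right. The genuine gap is the foundational height formula on which all of (a) rests. The path $\psi(\pi)$ in this paper does \emph{not} pass above the line $y=x$ and leaves the crosses to its \emph{left}, i.e.\ the crosses lie weakly above the path; so the height of its $i$-th east step is dictated by the constraints still ahead of it, namely $y_i=\min_{j\ge i}\min(j,\pi_j)-1$ (a right-to-left running minimum), not by your left-to-right running maximum $h_i=\max(i,\pi_i,h_{i-1})$, which instead describes the lowest path lying weakly \emph{above} all crosses and the diagonal --- a different path altogether. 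On the paper's own example $\pi=245136879$ of Fig.~\ref{dyck path}, your recursion gives $h=(2,4,5,5,5,6,8,8,9)$, a path with six platforms whose platform-ending east steps are $1,2,5,6,8,9$, whereas $\psi(\pi)$ has $\PT(D)=\{4,2,1^3\}$ and its platform-ending east steps are exactly the non-excedances $4,5,6,8,9$. So with your $h_i$ statement (a) is already false at $i=1,2,4$, and the deferred case analysis (``the bookkeeping shows the path does not turn'') cannot be completed; this is precisely the step you flagged as the main obstacle.

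The repair is to run your plan with the correct description of $\psi$. Since $y_i=\min\bigl(\min(i,\pi_i)-1,\,y_{i+1}\bigr)$, the $i$-th east step ends a platform iff $i=n$ or $y_{i+1}>y_i$, i.e.\ iff $\min(i,\pi_i)<\min(j,\pi_j)$ for all $j>i$. If $\pi_i\le i$, then for $j>i$ either $j$ is a non-excedance, whence $\pi_j>\pi_i$ by the increasing non-excedance subsequence, or $j$ is an excedance, whence $\min(j,\pi_j)=j>i\ge\pi_i$; so the inequality is strict. If $\pi_i>i$, then some value $\le i$ occurs after position $i$, giving a $j>i$ with $\min(j,\pi_j)\le i=\min(i,\pi_i)$. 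This proves (a), and your argument for (b) then closes the proof. For comparison, the paper offers no more detail than that these properties ``can be easily checked'' from the construction (crediting (b) to an earlier observation), so the intended argument is exactly such a direct verification --- but it hinges on the crosses-above/path-below orientation that your proposal has reversed.
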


Lemma~\ref{lem:Kra}~(b)  was first observed in~\cite{Lin}, which is crucial  in the proof of Theorem~\ref{thm:gf}~(i).
For a permutation $\pi\in\S_n$, denote by $\mne(\pi)$ the  {\em maximum number of non-overlapping excedances} of $\pi$. For example, if $\pi=245136879$, then $\mne(\pi)=3$. Setting $y=1$ in~\eqref{X:Y} we have  
\begin{equation}\label{X:mnd}
\sum_{\pi\in\S_n(231)}x^{\mnd(\pi)}=\sum_{T\in\B_n} x^{\X(T)}.
\end{equation} 
The following equdistribution is a consequence of~\eqref{X:mnd}, Lemma~\ref{lem:PT} and Lemma~\ref{lem:Kra}~(a). 

\begin{proposition}
For $n\geq1$, we have 
\begin{equation}\label{equ:mne}
\sum_{\pi\in\S_n(231)}x^{\mnd(\pi)}=\sum_{\pi\in\S_n(321)}x^{\mne(\pi)}. 
\end{equation}
\end{proposition}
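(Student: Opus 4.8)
The plan is to push the left-hand side of~\eqref{equ:mne} through the bijections $\tau\colon\B_n\to\D_n$ (from Section~\ref{sec:Krat}) and $\psi\colon\S_n(321)\to\D_n$, tracking the relevant statistic at each stage. Starting from~\eqref{X:mnd} together with the definition $\X(T)=\sum_{i\in\LC(T)}\lfloor i/2\rfloor$, Lemma~\ref{lem:PT} gives $\X(T)=\sum_{i\in\PT(\tau(T))}\lfloor i/2\rfloor$, so by the bijectivity of $\tau$,
$$
\sum_{\pi\in\S_n(231)}x^{\mnd(\pi)}=\sum_{T\in\B_n}x^{\X(T)}=\sum_{D\in\D_n}x^{\sum_{i\in\PT(D)}\lfloor i/2\rfloor}.
$$
Since $\psi$ is a bijection, the last sum equals $\sum_{\pi\in\S_n(321)}x^{\sum_{i\in\PT(\psi(\pi))}\lfloor i/2\rfloor}$. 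Hence~\eqref{equ:mne} follows as soon as we verify the pointwise identity
$$
\mne(\pi)=\sum_{i\in\PT(\psi(\pi))}\Big\lfloor\frac{i}{2}\Big\rfloor\qquad\text{for all }\pi\in\S_n(321).
$$

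To establish this I would read off the platform data of $D=\psi(\pi)$ from Lemma~\ref{lem:Kra}(a). The $i$-th east step of $D$ corresponds to position $i$ of $\pi$, as in Lemma~\ref{lem:Kra}, and the platforms of $D$ partition the positions $1,2,\dots,n$ into consecutive blocks: a platform of length $\ell$ corresponds to a block of $\ell$ consecutive positions, whose elements form (with multiplicity) the multiset $\PT(D)$. By Lemma~\ref{lem:Kra}(a), within such a block the last position is a non-excedance of $\pi$ while the preceding $\ell-1$ positions are excedances. Consequently the maximal runs of consecutive excedance positions of $\pi$ have lengths $\ell-1$ as $\ell$ ranges over $\PT(D)$ (blocks of size $1$ contributing empty runs), and any two distinct such runs are separated by at least one non-excedance, hence contain no pair of adjacent positions. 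A family of non-overlapping excedances therefore splits as a union of such families, one inside each run, and inside a run of $r$ consecutive positions a largest family of pairwise non-adjacent positions has size $\lceil r/2\rceil$ (take every other position). Summing over blocks gives
$$
\mne(\pi)=\sum_{\ell\in\PT(D)}\Big\lceil\frac{\ell-1}{2}\Big\rceil=\sum_{\ell\in\PT(D)}\Big\lfloor\frac{\ell}{2}\Big\rfloor,
$$
which is the desired identity.

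Almost every step above is a direct invocation of a result already proved in the excerpt; the one point that needs a little care is the claim that $\mne(\pi)$ equals the sum of $\lceil|R|/2\rceil$ over the maximal consecutive-excedance runs $R$ of $\pi$, i.e.\ that ``non-overlapping'' for excedances of a $321$-avoiding permutation is dictated precisely by non-adjacency of their positions and that distinct maximal runs impose no joint constraint. This is essentially the definition of $\mne$ rephrased through the block decomposition furnished by Lemma~\ref{lem:Kra}(a); granting it, the proposition drops out by combining the displayed chain of equalities with the elementary identity $\lceil(\ell-1)/2\rceil=\lfloor\ell/2\rfloor$.
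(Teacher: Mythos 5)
Your argument is correct and follows exactly the route the paper takes: the paper proves this proposition in one line by citing~\eqref{X:mnd}, Lemma~\ref{lem:PT} and Lemma~\ref{lem:Kra}(a), and your chain through $\tau$, $\psi$ and the platform decomposition is precisely that derivation with the details written out. The pointwise identity $\mne(\pi)=\sum_{\ell\in\PT(\psi(\pi))}\lfloor\ell/2\rfloor$, which you rightly flag as the only step needing care, is handled correctly (and matches the paper's intended reading of ``non-overlapping excedances'' as excedances at pairwise non-adjacent positions, consistent with the example $\mne(245136879)=3$).
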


In view of~\eqref{X:Y} and~\eqref{equ:mne},  one may wonder whether there exists a natural statistic ``$\st$'' on $321$-avoiding permutations so that 
\begin{equation*}
\sum_{\pi\in\S_n(231)}x^{\mnd(\pi)}y^{\mna(\pi^{-1})}=\sum_{\pi\in\S_n(321)}x^{\mne(\pi)}y^{\st(\pi)}. 
\end{equation*}
We shall answer this question in next subsection (see Theorem~\ref{thm:Phi2}). 
\subsection{A bijection from $321$-avoiding permutations to pairs of non-intersecting walks}
Given a permutation $\pi\in\S_n$, a position $i$, $2\leq i\leq n$, is a {\em weak excedance\footnote{For the sake of convenience, we exclude position $1$ from weak excedances.}}  of $\pi$ if $\pi_i\geq i$. Let  $\mnw(\pi)$ be the  {\em maximum number of non-overlapping weak excedances} of $\pi$. We introduce the set of {\em modified weak excedances} of $\pi$ as 
$$
\widetilde\WEXC(\pi):=\{i-1: \text{ $i$ is a weak excedance of $\pi$}\}.
$$
Denote by $\EXC(\pi)$ the set of all excedances of $\pi$.  For example, if $\pi=134275968$, then $\EXC(\pi)=\{2,3,5,7\}$, $\widetilde\WEXC(\pi^{-1})=\{1,4,5,7\}$ and $\mnw(\pi^{-1})=3$. 

\begin{theorem}\label{thm:Phi2}
There is a bijection $\Phi: \pi\mapsto(\mu,\nu)$ between $\S_n(321)$ and $\PW_n$ such that 
 \begin{equation}\label{ew:hv}
 (\EXC(\pi),\widetilde\WEXC(\pi^{-1}))=(\H(\nu),\V(\mu)).
 \end{equation}
 Consequently,  $\Phi^{-1}\circ\Theta: \pi\mapsto\sigma$ is a bijection between $\S_n(231)$ and $\S_n(321)$ such that 
 $$
(\DES(\pi),\ASC(\pi^{-1}))=(\EXC(\sigma),\widetilde\WEXC(\sigma^{-1})).
$$
In particular, 
\begin{equation*}\label{mne:mnw}
\sum_{\pi\in\S_n(231)}x^{\mnd(\pi)}y^{\mna(\pi^{-1})}=\sum_{\pi\in\S_n(321)}x^{\mne(\pi)}y^{\mnw(\pi^{-1})}
\end{equation*}
and there follows another permutation interpretation for the Catalan--Schett polynomials 
$$
C_n(x,y)=\sum_{\pi\in\S_n(321)}x^{n-2\mne(\pi)}y^{n-2\mnw(\pi^{-1})}.
$$
\end{theorem}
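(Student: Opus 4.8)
The plan is to construct $\Phi$ as a composition of Krattenthaler-type combinatorics mirroring the structure of $\Theta=\Vie\circ\varphi$ from Theorem~\ref{thm:theta}. Since $\Theta$ already identifies $\S_n(231)$ with $\PW_n$ via $(\DES(\pi),\ASC(\pi^{-1}))\mapsto(\H(\nu),\V(\mu))$, it suffices to produce an analogous bijection out of $\S_n(321)$ whose walk-pair reads off $(\EXC(\pi),\widetilde\WEXC(\pi^{-1}))$ instead. First I would note the elementary symmetry: for $\pi\in\S_n(321)$, position $i$ is an excedance of $\pi$ exactly when $\pi_i>i$, and using the characterization that a $321$-avoiding permutation splits into two increasing subsequences (the excedance letters and the rest), the set $\EXC(\pi)$ and the set $\EXC(\pi^{-1})$ (equivalently, via $\pi_i>i \iff (\pi^{-1})_{\pi_i}<\pi_i$, the non-weak-excedances of $\pi^{-1}$) are complementary in a controlled way. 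The key structural fact I would establish is that for $\pi\in\S_n(321)$ the two walks ``top encodes $\widetilde\WEXC(\pi^{-1})$, bottom encodes $\EXC(\pi)$'' are automatically non-intersecting, i.e., $\widetilde\WEXC(\pi^{-1})$ lies weakly to the left of $\EXC(\pi)$ step-by-step; this is the $321$-avoiding analogue of the containment $\DB(\pi^{-1})\subseteq\DES(\pi)$ reasoning used in Lemma~\ref{lem:DB}, and it should follow from tracking where the letter $\pi_i$ and the index $i$ sit relative to each other.

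Concretely, I would define $\Phi$ directly on the grid representation of $\pi\in\S_n(321)$ used in Subsection~\ref{sec:Krat}: place crosses at $(i,\pi_i)$, and read off the two lattice paths that separate excedance crosses (those strictly above the diagonal) from the rest, pushed as close to the diagonal as possible from each side. The bottom walk will be essentially Krattenthaler's $\psi(\pi)$ restricted to its ``close-to-diagonal'' description, whose east steps at position $i$ correspond to $i$ being a non-excedance versus the last step of a platform — so by Lemma~\ref{lem:Kra}(a) the positions that are \emph{not} last-steps-of-platforms encode $\EXC(\pi)$; and the top walk is obtained symmetrically from $\pi^{-1}$ (equivalently by reflecting the grid across the diagonal), whose east-step positions encode $\widetilde\WEXC(\pi^{-1})$. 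Bijectivity then reduces to the fact, already implicit in Krattenthaler's work, that a $321$-avoiding permutation is recovered from the pair of excedance/non-excedance increasing subsequences, which in turn are recovered from the pair of walks; I would phrase this as: $\Phi$ is invertible because $\Vie^{-1}$-type decoding of the walk pair yields the two increasing runs and their interleaving, which reconstitutes $\pi$.

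Once $\Phi$ is in hand with property~\eqref{ew:hv}, the consequences are immediate bookkeeping. The composite $\Phi^{-1}\circ\Theta$ takes $\pi\in\S_n(231)$ first to the walk pair $(\mu,\nu)$ with $(\H(\nu),\V(\mu))=(\DES(\pi),\ASC(\pi^{-1}))$ by Theorem~\ref{thm:theta}, then back to the unique $\sigma\in\S_n(321)$ with $(\EXC(\sigma),\widetilde\WEXC(\sigma^{-1}))=(\H(\nu),\V(\mu))$ by~\eqref{ew:hv}, giving the claimed statistic transfer $(\DES(\pi),\ASC(\pi^{-1}))=(\EXC(\sigma),\widetilde\WEXC(\sigma^{-1}))$. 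Summing, the identity $\mnd(\pi)=\des(\pi)$ on $\S_n(231)$ together with $\mnd(\pi)=|\DES(\pi)|$ and the parallel facts $\mne(\sigma)=\sum_{i\in\EXC(\sigma)}1$-type counting — more precisely, $\mne$ and $\mnw$ count \emph{non-overlapping} excedances/weak-excedances, so I must relate $|\EXC(\sigma)|$ and $|\widetilde\WEXC(\sigma^{-1})|$ to $\des$ and $\asc$ correctly. This is the one place needing care: I would check that on $\S_n(321)$ the equality $\mne(\sigma)=\des(\sigma^{\text{op}})$-style correspondence makes $x^{\mne(\sigma)}y^{\mnw(\sigma^{-1})}$ match $x^{\mnd(\pi)}y^{\mna(\pi^{-1})}$ term by term under $\Phi^{-1}\circ\Theta$, and then the Catalan--Schett formula follows from Theorem~\ref{thm:sch} via $C_n(x,y)=\sum_{\pi\in\S_n(231)}x^{\odr(\pi)}y^{\oar(\pi^{-1})}$ and the parity conversions $\mne(\sigma)=(n-\text{odd count})/2$, etc.

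The main obstacle I anticipate is not the definition of $\Phi$ but verifying the non-intersecting condition and, especially, pinning down the exact relationship between the \emph{non-overlapping} statistics $\mne,\mnw$ and the plain descent/ascent statistics on $\S_n(231)$ after transport — i.e., confirming that $\mnd(\pi)\leftrightarrow\mne(\sigma)$ and $\mna(\pi^{-1})\leftrightarrow\mnw(\sigma^{-1})$ rather than some off-by-one or parity-shifted variant. I would handle this by going through the decomposition-by-decomposition induction exactly as in the proof of Theorem~\ref{thm:theta} (three cases: $\a$ empty, $\b$ empty, both nonempty), tracking how $\EXC$ and $\widetilde\WEXC$ behave under the greatest-letter decomposition of a $321$-avoiding permutation, and matching against the $\bar N E$-type walk concatenations; alternatively, since Lemma~\ref{lem:Kra}(a) already gives the platform reading, I may be able to shortcut the induction entirely and argue via the grid picture.
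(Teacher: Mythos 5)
Your strategy is workable but genuinely different from the paper's in how $\Phi$ is justified. The paper takes $\Phi=\varsigma\circ\phi$, where $\phi$ is the Lin--Fu bijection from $\S_n(321)$ to two-colored Motzkin paths of length $n-1$ (built from the indicator vectors of excedance positions and values) and $\varsigma$ recodes each Motzkin step as a pair of walk steps; property~\eqref{ew:hv}, well-definedness and bijectivity are then inherited from the known bijectivity of $\phi$ and the transparent bijectivity of $\varsigma$. You instead encode the excedance data directly in the grid picture: $\nu$ records $\EXC(\pi)$ and $\mu$ records the data of $\widetilde\WEXC(\pi^{-1})$, with invertibility argued from the fact that a $321$-avoiding permutation is reconstructed from its excedance positions and excedance values. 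Since the walk pair is completely determined by the two sets in~\eqref{ew:hv}, any map with this property is the same map, so you are constructing the paper's $\Phi$ by other means; your route is more self-contained (no appeal to the Lin--Fu bijection), at the price of verifying by hand that the reconstruction from an arbitrary non-intersecting pair produces a $321$-avoiding permutation whose excedance set is exactly the prescribed one (surjectivity), which you only gesture at. Once the sets are transferred, your derivation of the second statement via Theorem~\ref{thm:theta} matches the paper's.

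Two points in your write-up need repair. First, your description of the top walk is stated the wrong way around: you say its \emph{east}-step positions encode $\widetilde\WEXC(\pi^{-1})$, whereas~\eqref{ew:hv} requires $\V(\mu)=\widetilde\WEXC(\pi^{-1})$, i.e.\ the \emph{north} steps of $\mu$ sit at $\widetilde\WEXC(\pi^{-1})$, equivalently the east steps of $\mu$ sit at $\{\pi_i-1:i\in\EXC(\pi)\}$ (shifted excedance values). As literally written the two walks would not even share an endpoint, since $|\widetilde\WEXC(\pi^{-1})|=(n-1)-|\EXC(\pi)|$; after the correction, the non-intersecting condition is exactly the easy prefix domination coming from ``every excedance value exceeds its position,'' as you anticipate. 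Second, the reduction of the ``in particular'' identities is the step you yourself flag as the main obstacle and then leave open: you still need that on $\S_n(321)$ the statistics $\mne(\sigma)$ and $\mnw(\sigma^{-1})$ are computed from the sets $\EXC(\sigma)$ and $\widetilde\WEXC(\sigma^{-1})$ by the same rule ($\sum\lfloor\ell/2\rfloor$ over maximal blocks of consecutive elements) by which $\mnd(\pi)$ and $\mna(\pi^{-1})$ are computed from $\DES(\pi)$ and $\ASC(\pi^{-1})$; this rests on the structure of excedances in $321$-avoiding permutations (compare Lemma~\ref{lem:Kra}(a) and the discussion around~\eqref{equ:mne}) and should be spelled out rather than deferred, since it is precisely where an off-by-one error could otherwise hide.
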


\begin{figure}
\centering
\begin{tikzpicture}
\begin{scope}[scale=0.7, xshift=-41mm, yshift=-25mm]
\draw [thick,black] (-1,0)--(0,0) ; \draw [thick,black] (2,0)--(3,0) -- (4,1) -- (5,0) -- (6,1) -- (7,0);

\draw[thick,decorate, decoration={snake, amplitude=0.2mm, segment length=2mm}] (0,0) -- (2,0);

\foreach \x/\y in {-1/0, 0/0, 1/0, 2/0, 3/0, 4/1, 5/0, 6/1, 7/0} {
    \filldraw [black] (\x,\y) circle (2.5pt);
}  
\end{scope}
\node [right=1.1cm,] at ($(7)!0.5!(8)$) {$\longmapsto$};
\node [right=1cm] at ($(7)!0.8!(8)$) {$\varsigma$};

\node [left=1.5cm,] at ($(1)!0.5!(4)$) {$\longmapsto$};
\node [left=2cm,] at ($(1)!0.8!(4)$) {$\phi$};
\node [left=2.5cm,] at ($(1)!0.5!(4)$) {$\pi=134275968$};
\begin{scope}[scale=0.7, xshift=50mm, yshift=-50mm]
\draw [green] (0,0) -- (0,1) -- (1,1) -- (2,1) -- (2,2) -- (3,2) -- (3,3) -- (4,3) -- (4,4);
\foreach \x/\y in {0/0, 0/1, 1/1, 2/1, 2/2, 3/2, 3/3, 4/3, 4/4} {
    \filldraw [black] (\x,\y) circle (2pt);
}  

    \begin{scope}[scale=1.05, xshift=-2.5mm, yshift=1.5mm]
           \draw [orange] (0,0) -- (0,1) -- (1,1) -- (2,1) -- (2,2) -- (2,3) -- (3,3) -- (3,4) -- (4,4);
        \foreach \x/\y in {0/0, 0/1, 1/1, 2/1, 2/2, 2/3, 3/3, 3/4, 4/4} {
        \filldraw [purple] (\x,\y) circle (2pt);
       }
    \end{scope}
\end{scope}

\end{tikzpicture}
\caption{An example of the bijection  $\Phi=\varsigma\circ\phi$. \label{vie2}}
\end{figure}

The bijection $\Phi$ consists of two main steps, the first of which is a bijection due to Lin and Fu~\cite{LF} between
$321$-avoiding permutations and two-colored Motzkin paths. 

Recall that a {\em Motzkin path} of length $n$ is a lattice path in $\N^2$ starting at $(0,0)$, ending at $(n,0)$, with three possible steps: 
$$
\text{$(1,1)=U$ (up step), $(1,-1)=D$ (down step)\,\,\,and\,\,\,$(1,0)=H$ (horizontal step).}
$$
A {\em two-colored Motzkin path} is  a Motzkin path whose  horizontal steps are colored by $\red{H}$ or $\blue{\tilde H}$.  See Fig.~\ref{vie2} for a display of the two-colored Motzkin path $\red{H}\blue{\tilde{H}\tilde{H}}\red{H}UDUD$, where $\tilde H$'s are drawn as wavy lines. Denote by  $\Mo_n^{(2)}$ the set of all two-colored Motzkin paths of length $n$.

In order to define Lin and Fu's bijection $\phi:\S_n(321)\rightarrow\Mo_{n-1}^{(2)}$, we need two  vectors to keep track of the values and positions of excedances.
 For $\pi\in\S_n(321)$, let 
$$
\val(\pi)=(v_1,\ldots,v_n)\quad\text{and}\quad\pos(\pi)=(p_1,\ldots,p_n),
$$
where $v_i=\chi(i>\pi^{-1}_i)$ and $p_i=\chi(\pi_i>i)$. For instance, for  $\pi=134275968\in\S_{9}(321)$, we have
$$\val(\pi)=(0,0,1,1,0,0,1,0,1),\quad\pos(\pi)=(0,1,1,0,1,0,1,0,0).$$
Now, define $\phi(\pi)=M=m_1m_2\ldots m_{n-1}\in\Mo_{n-1}^{(2)}$ as
\begin{align*}
m_{i}=
\begin{cases}
\,\,U\quad&\text{if $v_{i+1}=0$ and $p_i=1$},\\
\,\,\blue{\tilde H}\quad&\text{if $v_{i+1}=p_i=1$},\\
\,\,D\quad&\text{if $v_{i+1}=1$ and $p_i=0$},\\
\,\,\red{H}\quad&\text{if $v_{i+1}=p_i=0$}.
\end{cases}
\end{align*}
Continuing with our  example, we have $\phi(134275968)=\red{H}\blue{\tilde{H}\tilde{H}}\red{H}UDUD$. Lin and Fu~\cite{LF} proved that $\phi$ is a bijection, which forms the first step of $\Phi$.

The second step of $\Phi$ is the bijection $\varsigma: M=m_1m_2\ldots m_{n-1}\mapsto (\mu,\nu)$  defined as
\begin{itemize}
\item if $m_i=U$, then the $i$-th step of $\mu$ is $N$ and the $i$-th step of $\nu$ is $E$;
 \item if $m_i=D$, then the $i$-th step of $\mu$ is $E$ and the $i$-th step of $\nu$ is $N$;
\item if $m_i=\red{H}$, then both the $i$-th step of $\mu$ and $\nu$ are $N$; 
\item if $m_i=\blue{\tilde H}$, then both the $i$-th step of $\mu$ and $\nu$ are $E$.
\end{itemize}
See Fig.~\ref{vie2} for an example of $\varsigma$. It is clear that $\varsigma$ is a bijection between $\Mo_{n-1}^{(2)}$ and $\PW_n$. 

\begin{proof}[{\bf Proof of Theorem~\ref{thm:Phi2}}] Set  $\Phi=\varsigma\circ\phi$. From the constructions of the two steps, it can be easily checked   that the bijection $\Phi$ satisfies the desired property~\eqref{ew:hv}. The second statement  then follows from  Theorem~\ref{thm:theta} and Theorem~\ref{thm:sch},  which completes the proof. 
\end{proof}

\subsection{The bijection $\Psi: \S_n(321)\rightarrow \S_n(231)$}
The bijection $\Psi$ consists of two main steps, one is the Simion--Schmidt bijection~\cite{SS} (see also~\cite[p.~148]{Kit2}) between $\S_n(321)$ and $\S_n(312)$, and another one is a newly constructed  bijection between $\S_n(312)$ and $\S_n(231)$ basing on  the Foata--Zeilberger bijection~\cite[Sec.~8]{FZ} (see also~\cite[Sec.~4]{Cor}).

For the sake of completeness, we review the Simion--Schmidt bijection  $\eta: \S_n(321)\rightarrow\S_n(312)$ that preserves the values and positions of left peaks. For each $\pi\in\S_n(321)$, the permutation $\sigma=\eta(\pi)\in\S_n(312)$ can be constructed by the following algorithm:
\begin{itemize}
\item[(1)] (Start) set $\sigma_1=\pi_1$; $x\leftarrow \pi_1$, $i\leftarrow1$;
\item[(2)] $i\leftarrow i+1$; if $\pi_i<x$, then set $\sigma_i=\max\{k:  k<x, k\neq \sigma_j\text{ for all $j<i$}\}$; otherwise,  $\pi_i>x$ and set $\sigma_i=\pi_i$, $x\leftarrow \pi_i$; do step (2) again until $i=n$. 
\end{itemize}
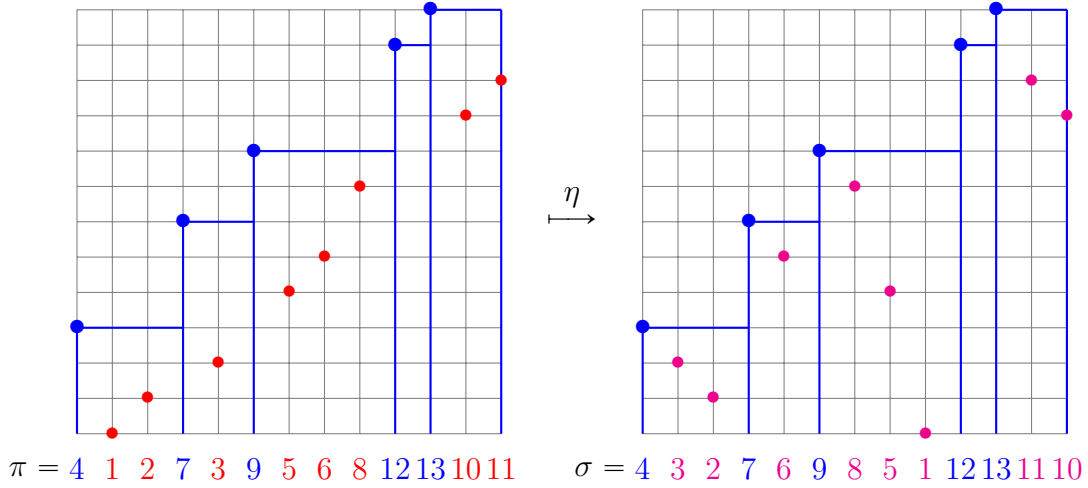
\begin{figure}
\begin{center}
\begin{tikzpicture}[scale=0.47]
\draw[step=1cm,thin,gray] (0,0) grid (12,12);\draw[step=1cm, thin,gray] (16,0) grid (28,12);
\blue{\draw[ thick,-] (0,0) to (0,3);\draw[ thick,-] (12,0) to (12,12);
\draw[ thick,-] (0,3) to (3,3);\draw[ thick,-] (3,0) to (3,6);\draw[ thick,-] (3,6) to (5,6);
\draw[thick,-] (5,0) to (5,8);\draw[ thick,-] (5,8) to (9,8);\draw[ thick,-] (9,0) to (9,11);
\draw[ thick,-] (9,11) to (10,11);\draw[ thick,-] (10,0) to (10,12);\draw[thick,-] (10,12) to (12,12);}
\node at (0,3){\blue{\large{$\bullet$}}};\node at (14,6){$\longmapsto$};\node at (14,6.7){$\eta$};
\node at (0,-1){\blue{$4$}};
\node at (3,6){\blue{\large{$\bullet$}}};
\node at (3,-1){\blue{$7$}};
\node at (5,8){\blue{\large{$\bullet$}}};
\node at (5,-1){\blue{$9$}};
\node at (9,11){\blue{\large{$\bullet$}}};
\node at (9,-1){\blue{$12$}};
\node at (10,12){\blue{\large{$\bullet$}}};
\node at (10,-1){\blue{$13$}};
\red{\node at (1,0){\red{$\bullet$}};
\node at (1,-1){$1$};
\node at (2,1){\red{$\bullet$}};
\node at (2,-1){$2$};
\node at (4,2){\red{$\bullet$}};
\node at (4,-1){$3$};
\node at (6,4){\red{$\bullet$}};
\node at (6,-1){$5$};
\node at (7,5){\red{$\bullet$}};
\node at (7,-1){$6$};
\node at (8,7){\red{$\bullet$}};
\node at (8,-1){$8$};
\node at (11,9){\red{$\bullet$}};
\node at (11,-1){$10$};
\node at (12,10){\red{$\bullet$}};
\node at (12,-1){$11$};}

\blue{\draw[ thick,-] (16,0) to (16,3);\draw[ thick,-] (28,0) to (28,12);
\draw[ thick,-] (16,3) to (19,3);\draw[ thick,-] (19,0) to (19,6);\draw[ thick,-] (19,6) to (21,6);
\draw[thick,-] (21,0) to (21,8);\draw[ thick,-] (21,8) to (25,8);\draw[ thick,-] (25,0) to (25,11);
\draw[ thick,-] (25,11) to (26,11);\draw[ thick,-] (26,0) to (26,12);\draw[thick,-] (26,12) to (28,12);}
\node at (16,3){\blue{\large{$\bullet$}}};
\node at (16,-1){\blue{$4$}};
\node at (19,6){\blue{\large{$\bullet$}}};
\node at (19,-1){\blue{$7$}};
\node at (21,8){\blue{\large{$\bullet$}}};
\node at (21,-1){\blue{$9$}};
\node at (25,11){\blue{\large{$\bullet$}}};
\node at (25,-1){\blue{$12$}};
\node at (26,12){\blue{\large{$\bullet$}}};
\node at (26,-1){\blue{$13$}};
\magenta{\node at (17,2){\magenta{$\bullet$}};
\node at (17,-1){$3$};
\node at (18,1){\magenta{$\bullet$}};
\node at (18,-1){$2$};
\node at (20,5){\magenta{$\bullet$}};
\node at (20,-1){$6$};
\node at (22,7){\magenta{$\bullet$}};
\node at (22,-1){$8$};
\node at (23,4){\magenta{$\bullet$}};
\node at (23,-1){$5$};
\node at (24,0){\magenta{$\bullet$}};
\node at (24,-1){$1$};
\node at (27,10){\magenta{$\bullet$}};
\node at (27,-1){$11$};
\node at (28,9){\magenta{$\bullet$}};
\node at (28,-1){$10$};}
\node at (-1.2,-1){$\pi=$};
\node at (14.8,-1){$\sigma=$};
\end{tikzpicture}
\end{center}
\caption{An example of  $\eta$. \label{fig:eta}}
\end{figure}
See Fig.~\ref{fig:eta} for an example of $\eta$. The inverse algorithm for constructing $\pi=\eta^{-1}(\sigma)\in\S_n(321)$ from $\sigma\in\S_n(312)$ reads as follows:
\begin{itemize}
\item[(1*)] (Start) set $\pi_1=\sigma_1$; $x\leftarrow \sigma_1$, $i\leftarrow 1$;
\item[(2*)] $i\leftarrow i+1$; if $\sigma_i<x$, then set $\pi_i=\min\{k: k<x, k\neq \pi_j\text{ for all $j<i$}\}$; otherwise,  $\sigma_i>x$ and set $\pi_i=\sigma_i$, $x\leftarrow \sigma_i$; do step (2*) again until $i=n$. 
\end{itemize}
The following lemma is clear from the above algorithms.

\begin{lemma}\label{lem:eta}
The bijection $\eta: \S_n(321)\rightarrow\S_n(312)$ preserves the values and positions of left peaks. 
\end{lemma}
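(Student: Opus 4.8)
The plan is to reduce the claim to a clean description of left peaks for permutations avoiding a pattern of length~$3$, and then to track left-to-right maxima through the algorithm defining $\eta$. The first step is a characterization valid for any $\tau\in\S_n$ that avoids either $321$ or $312$: for $i\in[n-1]$, the value $\tau_i$ is a left peak occurring at position $i$ if and only if $\tau_i$ is a left-to-right maximum of $\tau$ and $\tau_i>\tau_{i+1}$. The ``if'' direction holds for both patterns, since a left-to-right maximum automatically satisfies $\tau_{i-1}<\tau_i$ (using $\tau_0=0$ when $i=1$), so $\tau_{i-1}<\tau_i>\tau_{i+1}$. For the ``only if'' direction, assume $\tau_i$ is a left peak but not a left-to-right maximum and pick $j<i$ with $\tau_j>\tau_i$; since $\tau_{i-1}<\tau_i<\tau_j$ one gets $j<i-1$. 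If $\tau$ avoids $321$, then $\tau_j\tau_i\tau_{i+1}$ is an occurrence of $321$. If $\tau$ avoids $312$, then necessarily $\tau_{i-1}<\tau_j$ (otherwise $\tau_{i-1}>\tau_j>\tau_i$ would contradict $\tau_{i-1}<\tau_i$), so $\tau_j\tau_{i-1}\tau_i$ is an occurrence of $312$. Either way we reach a contradiction.

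The second step is to follow left-to-right maxima through the algorithm for $\eta$. By induction on $i$, I would show that after the $i$-th step the running maximum of $\sigma=\eta(\pi)$ coincides with that of $\pi$, that $\pi_i$ is a left-to-right maximum of $\pi$ exactly when $\sigma_i$ is a left-to-right maximum of $\sigma$, and that in that case $\sigma_i=\pi_i$. This is read off directly from the algorithm displayed above: when $\pi_i$ exceeds the current common running maximum $x$, one sets $\sigma_i=\pi_i>x$, creating a new maximum with the same value; otherwise $\sigma_i$ is chosen strictly below $x$, hence is not a new maximum, and the two running maxima stay equal.

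Combining the two ingredients finishes the proof. Fix $i\in[n-1]$. If $\pi_i$ is not a left-to-right maximum, then neither is $\sigma_i$, so by the first step neither $\pi$ nor $\sigma$ has a left peak at $i$. If $\pi_i$ is a left-to-right maximum, then $\sigma_i=\pi_i$ is one as well, and since $\pi_i$ equals the running maximum at step $i$ (and likewise for $\sigma$), the inequality $\pi_i>\pi_{i+1}$ holds precisely when $\pi_{i+1}$ is not a left-to-right maximum of $\pi$, which by the second step happens precisely when $\sigma_{i+1}$ is not a left-to-right maximum of $\sigma$, i.e. precisely when $\sigma_i>\sigma_{i+1}$. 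By the characterization, $\pi$ has a left peak at $i$ if and only if $\sigma$ does, and when it does the peak value is $\pi_i=\sigma_i$; this is exactly the assertion that $\eta$ preserves the values and positions of left peaks. The only genuinely delicate point I anticipate is the $312$-avoidance half of the first step: unlike the $321$ case, a left peak of a $312$-avoiding permutation is not obviously a left-to-right maximum, and the forbidden pattern must be exhibited using position $i-1$ rather than $i+1$; everything else is routine bookkeeping with the algorithm.
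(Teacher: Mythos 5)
Your proof is correct and follows the route the paper intends: the paper simply declares the lemma ``clear from the above algorithms,'' and your argument is the natural fleshing-out of that claim, tracking that $\eta$ fixes left-to-right maxima (in value and position) while placing all other entries below the running maximum. The auxiliary characterization---that for $321$- or $312$-avoiding permutations a left peak is exactly a left-to-right maximum followed by a descent---is a correct and useful way to make the ``clear'' step rigorous, including the slightly less obvious $312$ case handled via the pattern at positions $j<i-1<i$.
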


For the second step of $\Psi$, we need to review  the Foata--Zeilberger bijection $\Psi_{FZ}$, which is a variant of the classical Fran\c{c}on--Viennot bijection~\cite{FV} (see also~\cite{Lin3}), from $\S_n$ to the restricted Laguerre histories of length $n$. Recall that a {\em restricted Laguerre history} of length $n$ is a pair $(M,w)$, where $M=m_1m_2\cdots m_n\in\Mo_n^{(2)}$ and $w=w_1w_2\cdots w_n\in\N^n$ is a weight function satisfying 
$$
0\leq w_i\leq \begin{cases}
h_i(M),\quad&\text{if $m_i=U, \red{H}$};\\
h_i(M)-1, \quad&\text{if $m_i=D, \blue{\tilde H}$}.
\end{cases}
$$
Here $h_i(M)$ is the {\em height} of the $i$-th step of $M$: 
$$
h_i(M):=|\{j\mid j<i, m_j=U\}|-|\{j\mid j<i, m_j=D\}|.
$$
Let $\L_n$ denote the set of all restricted Laguerre histories of length $n$. 

Given a permutation $\pi\in\S_n$, we set
   $\pi_0=0$ and $\pi_{n+1}=+\infty$.
The  bijection $\Psi_{FZ}: \S_n\rightarrow\L_{n}$ can be defined as $\Psi_{FZ}(\pi)=(M,w)$, where for  $i\in[n]$ with $\pi_j=i$: 
$$
  m_i=\left\{
  \begin{array}{ll}
  U&\quad\mbox{if $\pi_{j-1}>\pi_j<\pi_{j+1}$},  \\
 D&\quad\mbox{if $\pi_{j-1}<\pi_j>\pi_{j+1}$},  \\
  \red{H} &\quad\mbox{if $\pi_{j-1}<\pi_j<\pi_{j+1}$},\\
  \blue{\tilde{H}}&\quad\mbox{if $\pi_{j-1}>\pi_j>\pi_{j+1}$},
  \end{array}
  \right.
  $$
  and $w_i$ is the number of $\underline{31}2$-patterns with $i$ representing the $2$, i.e., 
$$w_i=(\underline{31}2)_i(\pi):=|\{k: k<j\text{ and } \pi_k<\pi_j=i<\pi_{k-1}\}|.$$
For example, if $\pi=524139768\in\S_9$, then $\Psi_{FZ}(\pi)=(UU\red{H}DDU\blue{\tilde H}\red{H}D,002100010)$.

The inverse algorithm $\Psi_{FZ}^{-1}$ building a permutation $\pi$ (in $n$ steps) from a Laguerre history $(M,w)\in\L_{n}$ can be described iteratively as:
\begin{itemize}
\item Initialization: $\pi=\diamond$;
\item At the $i$-th ($1\leq i\leq n$) step of the algorithm, replace the $(w_i+1)$-th $\diamond$ (from left to right) of $\pi$ by
$$
\begin{cases}
\,\diamond i\diamond&\quad \text{if $m_i=U$},\\
\, i\diamond&\quad \text{if $m_i=\red{H}$},\\
 \, i& \quad\text{if $m_i=D$},\\
 \, \diamond i&\quad \text{if $m_i=\blue{\tilde H}$};
\end{cases}
$$
\item The final permutation is obtained by removing  the last remaining $\diamond$. 
\end{itemize}
For example, if $(M,w)=(UU\red{H}DDU\blue{\tilde H}\red{H}D,002100010)\in\L_9$, then
\begin{align*}
\pi&=\diamond\rightarrow \diamond1\diamond\rightarrow\diamond2\diamond1\diamond\rightarrow \diamond2\diamond13\diamond\rightarrow \diamond2413\diamond\rightarrow 52413\diamond\\
&\quad\rightarrow 52413\diamond6\diamond\rightarrow 52413\diamond76\diamond\rightarrow 52413\diamond768\diamond\rightarrow 524139768. 
\end{align*}

From the inverse algorithm $\Psi_{FZ}^{-1}$,  one can check easily the following lemma. 

\begin{lemma}[Corteel~\cite{Cor}]\label{lem:cort}
Suppose that $(M,w)\in\L_n$ and $\pi=\Psi_{FZ}^{-1}(M,w)$. Then for any $1\leq i\leq n$, 
$$
(\underline{31}2)_i(\pi)+(2\underline{31})_i(\pi)=
\begin{cases}
h_i(M), &\quad\text{if $m_i=U, \red{H}$};\\
h_i(M)-1, &\quad\text{if $m_i=D, \blue{\tilde H}$}.
\end{cases}
$$
Here $(2\underline{31})_i(\pi)$ denotes the number of $2\underline{31}$-patterns in $\pi$ with $i$ representing the $2$, i.e.,
$$
(2\underline{31})_i(\pi)=|\{k: k>j\text{ and } \pi_{k+1}<\pi_j=i<\pi_{k}\}|.
$$
\end{lemma}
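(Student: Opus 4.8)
\textbf{Proof plan for Lemma~\ref{lem:cort}.}

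The strategy is to verify the claimed identity directly from the inverse insertion algorithm $\Psi_{FZ}^{-1}$, by tracking, for each value $i$, the number of $\diamond$-slots lying strictly to the left of the slot that will be occupied by $i$ at the moment $i$ is inserted. The key observation is that when $i$ is about to be placed (at step $i$ of the algorithm), the permutation-in-progress consists of the values $1,2,\dots,i-1$ already inserted together with a collection of $\diamond$'s, and each $\diamond$ records a ``gap'' into which a larger value may still fall. I would first establish the bookkeeping fact that the total number of $\diamond$'s present just before step $i$ equals $h_i(M)+1$: indeed each $U$-step creates a net $+1$ slot, each $D$-step a net $-1$, each $H$ or $\tilde H$ leaves the count unchanged, and one starts with a single $\diamond$; so before inserting $i$ there are $1 + (\#U\text{ before }i) - (\#D\text{ before }i) = h_i(M)+1$ slots, and $w_i$ is by definition the number of those lying to the left of $i$'s target slot, hence $0\le w_i\le h_i(M)$ (with the sharper bound $w_i\le h_i(M)-1$ forced when $m_i\in\{D,\tilde H\}$ because such steps consume a slot rather than at its left end—this is exactly the restriction defining $\L_n$).

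Next I would identify $w_i$ and $h_i(M)-w_i$ (or $h_i(M)-1-w_i$) combinatorially in the \emph{final} permutation $\pi$. The point is that a $\diamond$ to the left of $i$ at step $i$ is eventually filled by some value $\ell>i$, and because larger values inserted later never cross the already-placed value $i$ except by landing in one of these recorded gaps, a left-$\diamond$ becomes precisely an occurrence of a pattern in which some $\ell>i$ sits to the left of $i$ while the entry immediately to its left (at insertion time, which is the entry immediately preceding $\ell$ in $\pi$) is smaller than $i$; this is exactly the $\underline{31}2$ pattern counted by $(\underline{31}2)_i(\pi)$. Symmetrically, a $\diamond$ to the right of $i$'s slot is filled by some $\ell>i$ appearing to the right of $i$ in $\pi$ with the entry following $\ell$ smaller than $i$, which is the $2\underline{31}$ pattern counted by $(2\underline{31})_i(\pi)$. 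Since every $\diamond$ present before step $i$ is ultimately filled by a value exceeding $i$ (the remaining values $i+1,\dots,n$ are inserted in increasing order, one per step, and the last leftover $\diamond$ is deleted—so one must be slightly careful to check that the final deletion does not affect the count for any fixed $i<n$, which holds because that terminal $\diamond$ sits at the extreme right), the slots split as left-slots plus right-slots, giving
$$
(\underline{31}2)_i(\pi) + (2\underline{31})_i(\pi) \;=\; (\text{total slots before step }i) \;-\; \epsilon_i,
$$
where $\epsilon_i\in\{1,2\}$ accounts for the slot(s) consumed at step $i$ itself: inserting $\diamond i\diamond$ (case $U$) or $i\diamond$ (case $\red H$) replaces one $\diamond$ by a pattern still containing one or two $\diamond$'s, so the ``used up'' count relative to $h_i(M)+1$ is $1$, whereas inserting $i$ (case $D$) or $\diamond i$ (case $\tilde H$) yields $\epsilon_i=2$; subtracting gives $h_i(M)$ in the first two cases and $h_i(M)-1$ in the last two, as claimed.

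I expect the main obstacle to be making the informal phrase ``a $\diamond$ is eventually filled by a larger value landing in a pattern'' fully rigorous: one needs a clean invariant, maintained through the algorithm, asserting that at step $i$ the $\diamond$ in position $p$ (reading left to right among current entries) corresponds bijectively to a future insertion creating exactly one of the two patterns at $i$, and that this correspondence is stable under all subsequent steps. The cleanest route is probably induction on $n$ using the greatest-value (or here, rather, the step-by-step) structure, or alternatively quoting that this is precisely the content worked out by Corteel~\cite{Cor} (as the lemma attribution already signals) and merely re-deriving the height identity from the net-slot-count computation above, which is short and self-contained. Either way, the arithmetic of net slot counts is routine; the substantive step is the pattern-to-slot dictionary.
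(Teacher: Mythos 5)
Your overall strategy---tracking $\dia$-slots through $\Psi_{FZ}^{-1}$ and using the slot count $h_i(M)+1$---is the right one (the paper itself gives no argument beyond citing Corteel and asserting the lemma can be checked from the inverse algorithm), and your net-slot computation is correct. But the slot-to-pattern dictionary, which you yourself single out as the substantive step, is wrong as stated in two places, and the numerical agreement you reach is partly an artifact of compensating errors. First, the left dictionary: a $\dia$ lying strictly left of the target, with right neighbour a value $b<i$, produces its occurrence of $\underline{31}2$ through the adjacency at the \emph{right} edge of the block that eventually fills it (the last value of the block immediately followed by $b$); your description ``the entry immediately to its left \ldots is smaller than $i$'' is the opposite adjacency, an ascent through level $i$, which $(\underline{31}2)_i$ does not count. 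Second, the claim that ``every $\dia$ present before step $i$ is ultimately filled by a value exceeding $i$,'' and hence that every right slot yields a $2\underline{31}$ occurrence, is false in the way that matters: the word always ends with a $\dia$ (the restriction $w_j\le h_j(M)-1$ for $m_j\in\{D,\blue{\tilde H}\}$ guarantees those steps never target the last $\dia$), and the rightmost $\dia$ contributes no occurrence at level $i$ \emph{even when it is filled}, because the block it produces sits at the extreme right of $\pi$ and is never followed by a value smaller than $i$. Your parenthetical only addresses the final deletion, not this.

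Because of this systematic overcount of one on the right, your correction $\epsilon_i$ does not measure what you say it measures, and its stated justification (``the replacement still contains one or two $\dia$'s'') cannot be correct: $\red{H}$ and $\blue{\tilde H}$ both leave exactly one $\dia$ yet must get different values of $\epsilon_i$. The relevant dichotomy is whether the replacement ends with a $\dia$ immediately to the right of $i$ (cases $U,\red{H}$) or with $i$ itself (cases $D,\blue{\tilde H}$). The clean accounting is: $(\underline{31}2)_i(\pi)=w_i$, since each of the $w_i$ $\dia$'s strictly left of the target is eventually filled (it can never become the last character) and has a value $<i$ as right neighbour, while a $\dia$ created at step $i$ immediately left of $i$ is followed by $i$ itself and gives nothing; and $(2\underline{31})_i(\pi)$ equals the number of $\dia$'s strictly right of $i$ in the word just after step $i$, minus one for the last character. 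Counting those right $\dia$'s ($h_i-w_i$ old ones, plus one created one for $U,\red{H}$ and none for $D,\blue{\tilde H}$) gives $h_i$ and $h_i-1$ respectively. In your version the $U,\red{H}$ cases come out right only because the missed contribution of the created right $\dia$ cancels the overcount from the rightmost $\dia$, and in the $D,\blue{\tilde H}$ cases the extra $-1$ is asserted rather than derived; the plan is salvageable, but the invariant you need is the one above, not the one you wrote.
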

Note that $\pi$ is $231$-avoiding (resp.,~$312$-avoiding) iff $(2\underline{31})_i(\pi)=0$ (resp.,~$(\underline{31}2)_i(\pi)$)  for all $i$. Thus, for any $\pi\in\S_n(312)$, $\Psi_{FZ}(\pi)=(M,w)$ with $w_i=0$ for all $i$.  Lemma~\ref{lem:cort} then induces the bijection $\psi_{FZ}:\pi\mapsto\sigma$ between $\S_n(312)$ and $\S_n(231)$, where 
$\sigma=\Psi_{FZ}^{-1}(M,h)$ with 
$$
h_i=\begin{cases}
h_i(M),\quad&\text{if $m_i=U, \red{H}$};\\
h_i(M)-1, \quad&\text{if $m_i=D, \blue{\tilde H}$}.
\end{cases}
$$
For example, if $\pi=432768951\in\S_n(312)$, then $\Psi_{FZ}(\pi)=(M,w)$ with $w_i=0$ for all $i$ and $M=UU\blue{\tilde H}D\blue{\tilde H}UD\red{H}D$. Thus, $\sigma=\psi_{FZ}(\pi)=\Psi_{FZ}^{-1}(M,011101110)$ is constructed iteratively as:
\begin{align*}
\sigma&=\dia\rightarrow\dia1\dia\rightarrow\dia1\dia2\dia\rightarrow\dia1\dia32\dia\rightarrow\dia1432\dia\rightarrow\dia51432\dia\\
&\quad\rightarrow \dia51432\dia6\dia\rightarrow\dia5143276\dia\rightarrow\dia51432768\dia\rightarrow951432768. 
\end{align*}
\begin{lemma}\label{lem:FZ}
The bijection $\psi_{FZ}:\S_n(312)\rightarrow\S_n(231)$ preserves the values of left peaks. 
\end{lemma}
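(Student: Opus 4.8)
The plan is to show that, for \emph{any} permutation $\tau\in\S_n$, the set of values that are left peaks of $\tau$ is determined solely by the two-colored Motzkin path component of $\Psi_{FZ}(\tau)$ — it is exactly the set of indices carrying a $D$-step — and then to use the fact that $\psi_{FZ}$ modifies only the weight function, not the underlying path.

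First I would record the key observation. Fix $\tau\in\S_n$ and write $\Psi_{FZ}(\tau)=(M,w)$ with $M=m_1m_2\cdots m_n$, using the conventions $\tau_0=0$ and $\tau_{n+1}=+\infty$ built into $\Psi_{FZ}$. By the defining rule for $\Psi_{FZ}$, for each value $i\in[n]$, if $\tau_j=i$ then $m_i=D$ precisely when $\tau_{j-1}<\tau_j>\tau_{j+1}$. I would then match this with ``$i$ is a left peak of $\tau$'' in the sense of the Introduction: for $2\leq j\leq n-1$ the two conditions are literally the same; for $j=1$, $m_i=D$ reads $0=\tau_0<\tau_1$ and $\tau_1>\tau_2$, i.e. $\tau_1>\tau_2$, which is exactly the condition for the value $\tau_1$ (index $1\in[n-1]$) to be a left peak, since the Introduction uses the same convention $\tau_0=0$; and for $j=n$, $m_i$ is never $D$ because $\tau_{n+1}=+\infty>\tau_n$, matching the fact that the index $n$ is excluded from the range $[n-1]$ of left-peak positions. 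Hence, for every $\tau\in\S_n$,
$$
\LPK(\tau)=\{\,i\in[n]: m_i=D\,\}.
$$

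Next I would apply this identity to $\pi\in\S_n(312)$ and to $\sigma=\psi_{FZ}(\pi)$. By construction $\psi_{FZ}(\pi)=\Psi_{FZ}^{-1}(M,h)$, where $M$ is exactly the Motzkin path of $\Psi_{FZ}(\pi)$ and only the weight vector has been replaced by the maximal admissible one $h$; this is indeed an element of $\L_n$ because each $h_i$ attains the relevant upper bound. Consequently the Motzkin path component of $\Psi_{FZ}(\sigma)$ equals $M$ as well, so the set $\{i:m_i=D\}$ is literally the same whether computed from $\pi$ or from $\sigma$. Applying the displayed identity twice then gives $\LPK(\pi)=\{i:m_i=D\}=\LPK(\sigma)$, which is the assertion.

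The only delicate point is the first step, namely checking that the ``$D$-step'' clause of $\Psi_{FZ}$ captures the combinatorial notion of left peak at the two boundary positions $j=1$ and $j=n$; once the conventions $\tau_0=0$ and $\tau_{n+1}=+\infty$ are handled, everything is immediate. I would also note that this argument needs neither $312$-avoidance of $\pi$ nor $231$-avoidance of $\sigma$ — the identity $\LPK(\tau)=\{i:m_i=D\}$ is valid for all $\tau\in\S_n$ — although it is the $312$-avoidance of $\pi$ (forcing $w=0$, via Lemma~\ref{lem:cort}) that makes $\psi_{FZ}$ well defined and its image $231$-avoiding in the first place.
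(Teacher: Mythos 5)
Your proposal is correct and follows essentially the same route as the paper: the key identity $\LPK(\tau)=\{i: m_i=D\}$ (which the paper states for $2\leq i\leq n$, equivalent since $m_1$ is never $D$) combined with the fact that $\psi_{FZ}$ changes only the weight vector and not the underlying two-colored Motzkin path. Your additional verification of the boundary cases $j=1$ and $j=n$ simply makes explicit what the paper leaves to the reader.
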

\begin{proof}
Note that for any $\pi\in\S_n$, if  $\Psi_{FZ}(\pi)=(M,w)$, then 
$$\LPK(\pi)=\{2\leq i\leq n: m_i=D\}.$$
The result then follows from the construction of $\psi_{FZ}$. 
\end{proof}

In view of Lemmas~\ref{lem:eta} and~\ref{lem:FZ}, we set $\Psi=\psi_{FZ}\circ\eta$, which is a bijection between $\S_n(321)$ and $\S_n(231)$ preserving the statistic ``$\LPK$''.

\subsection{The bijection $\vartheta$ from plane trees to stack-sortable permutations.}
We aim to construct the bijection $\vartheta$ for Theorem~\ref{bij:pt-231}. For this purpose, we need a decomposition of plane trees parallel to the first letter decomposition of stack-sortable permutations.

Given a tree $T\in \T_n$, let $r$ be the root of $T$. We decompose $T$ into two smaller trees $T_L$ and $T_R$ according to the following two cases.
\begin{itemize}
\item[(1)] The root $r$ of $T$ has leaves as its children. Let $w$ denote the leftmost leaf-child of $r$ in tree $T$. Then define  $T_L$ (resp.,~{$T_R$})  the subtree of $T$ rooted at $r$ on the left (resp.,~right) of the edge $(r,w)$; see  Fig.~\ref{tree1}. Note that all children of the root of $T_L$ are non-leaves. 
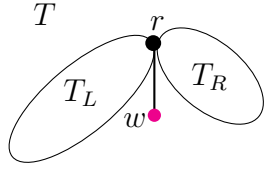
\begin{figure}[h]
\centering
\begin{tikzpicture}[scale=0.3]
\node at (9,23) {$T$};
\node at (13.8,21.7) {\circle*{6}};\node at (14,22.6) {$r$};
\node at (13,18.3) {$w$};
\node at (16.3,20.2) {$T_{R}$};
\node at (10.6,19.5) {$T_{L}$};
\draw [rotate=50] (26,0.5) ellipse (1.5 and 2.8);
\draw [rotate=-50] (-7.9,20.5) ellipse (1.5 and 4);
\draw[thick] (13.85,22) to  (13.85,18.5);
\node at (13.85,18.5) {\magenta{\circle*{5}}};
\end{tikzpicture}
\caption{The decomposition when the root $r$ has leaves as its children.\label{tree1}}
\end{figure}
\item[(2)] All children of the root $r$ are non-leaves. Let $x$ be the  first leaf  under preorder traversal of $T$ and let $y$ be the parent of $x$.
The nodes between $r$ and $y$ (under the preorder) are denoted by $a,b,\ldots,c$ (if any). Denote by $T_H,T_A,T_B,\ldots,T_C,T_F$ the subtrees rooted at $r,a,b,\ldots,c,y$ that are on the right of the path from $r$ to $x$, respectively (see Fig.~\ref{tree2} for the illustration). 

Now $T_L$ is constructed by attaching $T_H$ and $T_F$ at $y$ of the edge $(y,x)$ from left side and right side respectively. Note that $y$ is the root of $T_L$ and $x$ is the leftmost leaf-child of $y$. The tree $T_R$ is obtained by hanging $T_A$ at $a$, $T_B$ at $b$, $\ldots$, $T_C$ at $c$ to a new node in this specified order. See Fig.~\ref{tree2} for an illustration of constructing $T_L$ and $T_R$ from $T$. 
\begin{figure}[h]
\centering
\begin{tikzpicture}[scale=0.3]
\node at (-2,1.7) {\circle*{6}};
\node at (-4.6,-1.5) {\circle*{5}};
\node at (-7.1,-4.6) {\circle*{5}};

\node at (-13.5,-12.4) {\circle*{5}};
\node at (-9.15,-7.1){\circle*{5}};

\draw [thick] (-2,1.7) to  (-7.1,-4.6);
\draw [dashed] (-7.1,-4.6) to  (-9.15,-7.1);
\draw [thick] (-9.15,-7.1) to  (-10.9,-9.2);
\draw [thick]   (-10.9,-9.2) to (-13.5,-12.4);
\draw [rotate=50] (0,0) ellipse (1 and 2.5);
\draw [rotate=50] (-4,0) ellipse (1 and 2.5);
\draw [rotate=50] (-8,0) ellipse (1 and 2.5);
\draw [rotate=50] (-11.5,0) ellipse (1 and 2.5);
\draw [rotate=50] (-15,0) ellipse (1 and 2.5);

\node at (-11.5,-10) {\magenta{\circle*{6}}};

\node at (0,0) {$T_{H}$};
\node at (-2,1.7) {\circle*{6}};

\node at (-9.15,0){$T$};
\node at (-2.8,-3) {$T_A$};
\node at (-5.6,-6) {$T_B$};
\node at (-7.6,-8.4) {$T_C$};
\node at (-10,-11.4) {$T_F$};
\node [left] at (-2,1.7){$r$};
\node [left] at (-4.6,-1.5) {$a$};
\node [left] at (-7.1,-4.6) {$b$};

\node[left] at (-13.5,-12.4) {$x$};
\node [left] at (-11.5,-10) {$y$};
\node [left] at (-9.15,-7.1){$c$};
\Large{\node at (3.5,-8) {$\longrightarrow$};}

\begin{scope}[xshift=80mm, yshift=-20mm][scale=0.3]
\node [above, font=\small] at (2.8,-2.9) {$y$};

\node at (2.8,-6){\circle*{5}};
\node [below, font=\small] at (2.8,-6) {$x$};
\node [font=\small]at  (5,-4.3) {$T_F$};
\node [font=\small]at (1,-4.3) {$T_H$};

\draw [rotate=50] (-0.1,-6.5) ellipse (1 and 2.5);
\draw [rotate=-50] (3.95,-2.45) ellipse (1 and 2.5);
\draw [thick] (2.8,-2.9)--(2.8,-6);
\Large{\node at (9,-6) {$+$};}
\node at (2.8,-2.9){\magenta{\circle*{6}}};

\node at (17,-1) {\circle*{5}};
\node at (13.5,-3.5) {\circle*{4.5}};
\draw [thick] (17,-1)--(13.5,-3.5);
\draw [thick] (17,-1)--(15.7,-3.3);
\node at (15.7,-3.5) {\circle*{4.5}};
\node at (19.8,-3.5){\circle*{4.5}};
\draw [thick] (17,-1)--(19.8,-3.5);

\draw [rotate=-20] (14,-1.15) ellipse (1 and 2.5);
\draw[xshift=18mm,yshift=-24mm,rotate=-10] (13.9,-1.2) ellipse (1 and 2.5);
\draw[xshift=30mm,yshift=-23mm, rotate=15] (16,-8) ellipse (1 and 2.5);

\node [font=\small] at (12.8,-5.6) {$T_A$};
\node [font=\small] at (15.5,-5.6){$T_B$};
\node [font=\small] at (18,-5.6){$\dots$};
\node [font=\small] at (20.5,-5.6){$T_C$};
\node [left,font=\small] at (19.9,-3.5){$c$};
\node [left, font=\small] at (14,-3) {$a$};
\node [left, font=\small]  at (16,-3.2){$b$};
\node [font=\small] at (0,1) {$T_L$};
\node [font=\small] at (15,1) {$T_R$};
\end{scope}

\end{tikzpicture}
\caption{The decomposition when all children of the root $r$ are non-leaves.\label{tree2}}
\end{figure}
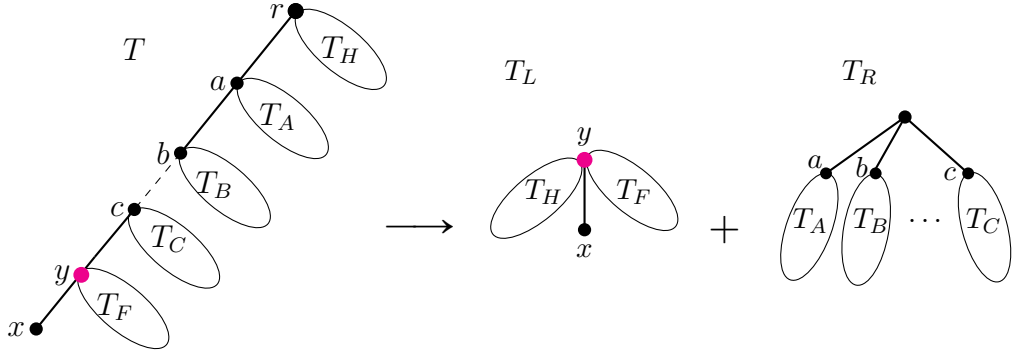
\end{itemize}
With the above decomposition of $T$ into $(T_L,T_R)$, we define
 the map $\vartheta$ recursively as:
\begin{equation*}
\vartheta(T) = (k\cdot\vartheta(T_L))\oplus\vartheta(T_R),
\end{equation*}
where $k$ is the number of nodes of $T_L$. 
The following property of $\vartheta$ can be proved easily   by induction on $n$. 
\begin{lemma}\label{idr:eo}
Let $T\in\T_n$ and $\pi=\vartheta(T)$. If the root of $T$ has leaves as its children, then $\idr(\pi)$ is odd. Otherwise, $\idr(\pi)$ is even. 
\end{lemma}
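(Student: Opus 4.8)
The statement to prove is Lemma~\ref{idr:eo}: if $T\in\T_n$ and $\pi=\vartheta(T)$, then $\idr(\pi)$ is odd when the root of $T$ has a leaf among its children, and even otherwise. I would prove this by strong induction on $n$, following the two-case decomposition $T\mapsto(T_L,T_R)$ that defines $\vartheta$, together with the recursive formula $\vartheta(T)=(k\cdot\vartheta(T_L))\oplus\vartheta(T_R)$ where $k=|T_L|$. The key general observation is that for a direct sum, $\idr\big((k\cdot\sigma)\oplus\rho\big)=\idr(k\cdot\sigma)$, i.e. the initial descending run only depends on the left summand $k\cdot\sigma$; and $\idr(k\cdot\sigma)=\idr(\sigma)+1$ when $\sigma$ is nonempty (since $k$ is larger than every entry of $\sigma$, prepending $k$ extends the initial descending run by one), while $\idr(k\cdot\sigma)=1$ when $\sigma$ is empty. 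So everything reduces to understanding $\idr(\vartheta(T_L))$.

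\medskip

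\textbf{Base case and Case (1).} For $n=1$ the tree is a single node, $\vartheta(T)=1$, $\idr=1$ is odd, and the root (being the only node) has a leaf child only in the degenerate reading — here one should check the convention matches; more robustly the base of the induction is handled by the two recursive cases bottoming out. In Case~(1), the root $r$ of $T$ has a leaf child, and by construction all children of the root of $T_L$ are non-leaves, so by the inductive hypothesis applied to $T_L$ the quantity $\idr(\vartheta(T_L))$ is even (this uses that $T_L$ has fewer nodes than $T$; if $T_L$ is a single node then $\idr=1$, and one checks this still reads as "even" under the stated convention, or handles it as a separate tiny base case). Then $\idr(\pi)=\idr(\vartheta(T_L))+1$ is odd, as desired. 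One must also treat the sub-case where $T_L$ is empty (root of $T$ has only leaf children, so $T_L$ is trivial): then $k$ is small and $\idr(\pi)=1$ is odd — consistent.

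\medskip

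\textbf{Case (2).} Here all children of the root $r$ are non-leaves, and $T_L$ is built by attaching $T_H$ and $T_F$ at the node $y$ of the edge $(y,x)$, with $x$ the leftmost leaf-child of $y$ in $T_L$. Thus the root $y$ of $T_L$ \emph{does} have a leaf child, so by the inductive hypothesis applied to $T_L$, $\idr(\vartheta(T_L))$ is odd. Hence $\idr(\pi)=\idr(\vartheta(T_L))+1$ is even, as claimed. Again one checks the empty/singleton boundary cases for $T_L$.

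\medskip

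\textbf{Main obstacle.} The routine part is the direct-sum identity for $\idr$; the genuinely delicate point is bookkeeping the boundary cases in the decomposition — precisely when $T_L$ is a single node or when the lists $a,b,\dots,c$ of intermediate nodes in Case~(2) are empty — and making sure the parity convention ("$\idr=1$ counts as odd") is applied consistently so that the induction closes at the bottom. I would state the direct-sum identity as an explicit sub-claim, verify the two or three genuinely small base configurations by hand, and then the inductive step in each of Cases~(1) and~(2) is a one-line parity flip: $\idr(\vartheta(T))=\idr(\vartheta(T_L))+1$, with the inductive hypothesis supplying the parity of $\idr(\vartheta(T_L))$ from the root-type of $T_L$, which is forced to be the opposite type of the root of $T$ by the construction.
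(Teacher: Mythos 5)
Your proof is correct and is essentially the argument the paper has in mind (the paper only asserts that the lemma ``can be proved easily by induction on $n$''): induct via the decomposition $T\mapsto(T_L,T_R)$, use $\idr\big((k\cdot\vartheta(T_L))\oplus\vartheta(T_R)\big)=\idr(\vartheta(T_L))+1$, and observe that the construction forces the root type of $T_L$ to be opposite to that of $T$. Your hedging about the boundary cases is unnecessary once you recall that $\T_n$ consists of plane trees with $n$ \emph{edges}: the $n=1$ tree is a root with one leaf child (so $\idr=1$ is odd as required), and when $T_L$ is a single node $\vartheta(T_L)$ is empty, giving $\idr(\pi)=1$, odd, exactly as in your Case~(1).
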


\begin{proof}[{\bf Proof of Theorem~\ref{bij:pt-231}}]
With Lemma~\ref{idr:eo}, we see that $\vartheta$ is a bijection by induction on $n$.
It remains to show that 
\begin{equation}\label{mk=mnd}
\mk(T)=\mnd(\pi)
\end{equation}
for any $T\in\T_n$ and $\pi=\vartheta(T)$.

We proceed by induction on $n$. Suppose that $\pi'=\vartheta(T_L)\in\S_{k-1}$ and $\pi''=\vartheta(T_R)\in\S_{n-k}$. Then  $\pi=(k\cdot\pi')\oplus\pi''$. We consider the following two cases:
 \begin{itemize}
\item If the root of $T$ has a leaf-child, then by the decomposition in Fig.~\ref{tree1}, we have 
\begin{align*}
\mk(T)&=\mk(T_L)+\mk(T_R)\\
&=\mnd(\pi')+\mnd(\pi'')=\mnd(\pi),
\end{align*}
where the second equality follows by induction and the last equality follows from Lemma~\ref{idr:eo} (as $\idr(\pi')$ is even). 
\item If all children of the root of $T$ are non-leaves, then by decomposition in Fig.~\ref{tree2}, we have 
\begin{align*}
\mk(T)&=\mk(T_L)+\mk(T_R)+1\\
&=\mnd(\pi')+\mnd(\pi'')+1=\mnd(\pi),
\end{align*}
where the second equality follows by induction and the last equality follows  from Lemma~\ref{idr:eo} (as $\idr(\pi')$ is odd). 
\end{itemize}
By induction, we have proved that~\eqref{mk=mnd} holds for all $T\in\T_n$, which completes the proof.
\end{proof}

In view of Theorem~\ref{bij:pt-231}, we pose the following interesting open problem for further research.

\begin{problem} Find a natural statistic ``$\st$'' on plane trees so that
\begin{equation*}
\sum_{\pi\in\S_n(231)}x^{\mnd(\pi)}y^{\mna(\pi^{-1})}=\sum_{T\in\T_n}x^{\mk(T)}y^{\st(T)}. 
\end{equation*}
\end{problem}

\section{Generating functions}
\label{sec:gf}
This section is devoted to the proof of Theorem~\ref{thm:gf}. As mentioned earlier, Krattenthaler's bijection $\psi$ defined in Sec.~\ref{sec:Krat} and the bijection $\Psi$ constructed in Theorem~\ref{bij:LPK} will be useful. 

\subsection{Generating function for the statistic pair $(\oar,\ear)$ over $\S_n(321)$} Recall that $\oar(\pi)$ (resp.,~$\ear(\pi)$) counts the number of odd (resp.,~even) ascending  runs in $\pi$. 
By Lemma~\ref{lem:Kra}~(b),  under Krattenthaler's bijection $\psi$, the positions of descents of $\pi\in\S_n(321)$ are transformed to the  positions of the  penultimate steps of long platforms of the Dyck path $D=\psi(\pi)$. 
 Thus, we partition the Dyck path $D$ into segments by breaking $D$ before the last step of each long  platform, and let  $\Comp(D)$  be the composition recording the number of east steps in each segment successively. For $D$ in Fig.~\ref{dyck path}, we have $\Comp(D)=(3,4,2)$.
Introduce $$\OP(D):= \# \text{ odd parts of  }\Comp(D)\quad\text{and}\quad\EP(D):= \# \text{ even parts of }\Comp(D).
$$
Then,  
$$
\oar(\pi)=\OP(D)   \text{\quad and \quad } \ear(\pi)=\EP(D).
$$
Thus, we have
\begin{equation*}
G(t,x,y)=\sum_{n\geq1}t^n\sum_{\pi\in\S_n(321)}x^{\oar(\pi)}y^{\ear(\pi)}=\sum_{n\geq1}t^n\sum_{D\in\D_n}x^{\OP(D)}y^{\EP(D)}.
\end{equation*}
Basing on this interpretation of $G(t,x,y)$ as Dyck paths, we are going to prove Theorem~\ref{thm:gf}~(i) by using the first return decomposition of Dyck paths. 

In order to achieve our goal, we need to partition Dyck paths into four classes according to the parities of the initial and the terminal parts of their recording compositions. For a given $D\in\D_n$, let $\i(D)$ and $\t(D)$ be respectively the {\em initial} and the {\em terminal parts} in $\Comp(D)$.
We abbreviate  $a\equiv b \pmod{2}$ as $a\equiv b$. Introduce  the following four refined generating functions:
\begin{align*}
EE(t,x,y)&=\sum_{n\geq1}t^n\sum_{D\in\EE_n}x^{\OP(D)}y^{\EP(D)}\quad\text{with $\EE_n:=\{D\in\D_n: \i(D)\equiv\t(D)\equiv0\}$},\\
EO(t,x,y)&=\sum_{n\geq1}t^n\sum_{D\in\EO_n}x^{\OP(D)}y^{\EP(D)}\quad\text{with $\EO_n:=\{D\in\D_n: \i(D)\equiv0,\t(D)\equiv1\}$},\\
OE(t,x,y)&=\sum_{n\geq1}t^n\sum_{D\in\OE_n}x^{\OP(D)}y^{\EP(D)}\quad\text{with $\OE_n:=\{D\in\D_n: \i(D)\equiv1,\t(D)\equiv0\}$},\\
OO(t,x,y)&=\sum_{n\geq1}t^n\sum_{D\in\OO_n}x^{\OP(D)}y^{\EP(D)}\quad\text{with $\OO_n:=\{D\in\D_n: \i(D)\equiv\t(D)\equiv1\}$}.
\end{align*}
We set $\EE$ (resp.,~$\EO,\OE,\OO$) to be the union of all $\EE_n$ (resp.,~$\EO_n,\OE_n,\OO_n$). 
A Dyck path without any long platform is called {\em zigzag}. Note that there is only one zigzag Dyck path with given fixed length. 

First we notice the following equality. 
\begin{lemma}\label{eo=oe}
We have
  \begin{equation}\label{EO=OE}
      EO(t,x,y) = OE(t,x,y).
   \end{equation}
\end{lemma}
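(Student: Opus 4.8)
The plan is to exhibit an explicit length-preserving involution on Dyck paths that swaps the classes $\EO$ and $\OE$ while preserving both statistics $\OP$ and $\EP$. The natural candidate is reversal: given $D \in \D_n$, let $D^{\mathrm{rev}}$ be the Dyck path obtained by reading the steps of $D$ in reverse order and swapping the roles of north and east steps (equivalently, reflecting the lattice-path picture across the anti-diagonal). This is clearly an involution on $\D_n$. The key point to check is how it interacts with the segmentation of $D$ used to define $\Comp(D)$: recall that $\Comp(D)$ records the number of east steps in each segment, where we break $D$ \emph{before} the last step of every long platform. Under reversal, a long platform of $k \geq 2$ consecutive east steps becomes a run of $k$ consecutive north steps, and the breakpoints get relocated accordingly. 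The heart of the argument is to verify that the multiset of part sizes of $\Comp(D^{\mathrm{rev}})$ equals that of $\Comp(D)$, so that $\OP$ and $\EP$ are preserved, while the first and last parts get interchanged (up to the symmetry of the construction), so that $\i(D^{\mathrm{rev}}) \equiv \t(D)$ and $\t(D^{\mathrm{rev}}) \equiv \i(D)$; this sends $\EO_n$ bijectively to $\OE_n$.

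Concretely, I would first translate the segmentation into the language of Lemma~\ref{lem:Kra}: via Krattenthaler's $\psi$, breakpoints sit at penultimate east steps of long platforms, i.e.\ at descents of the underlying $321$-avoiding permutation. It may be cleaner, however, to work purely with Dyck paths: write $D$ as a word in $N$ and $E$, identify its maximal east-runs (platforms) of lengths $\ell_1, \ell_2, \dots, \ell_m$ read left to right, and describe $\Comp(D)$ directly as the composition obtained by, for each platform of length $\ell_j \geq 2$, cutting off its final $E$ and appending it to the front of the next segment (with the very last such trailing $E$ forming, together with the following short platforms, the terminal part). Then I would describe the same data for $D^{\mathrm{rev}}$ and set up an explicit bijection between the segments of $D$ and those of $D^{\mathrm{rev}}$ that reverses their order and preserves east-step counts. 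Since reversal of the whole path reverses the order of segments, this will automatically swap initial and terminal parts and fix the multiset of parts.

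The main obstacle I anticipate is bookkeeping at the boundary: the definition of $\Comp(D)$ treats the first platform and the last platform asymmetrically (there is no segment "before" the start of the path and no long platform is guaranteed at the very end), so I need to confirm that the initial part $\i(D)$ — which collects the east steps from the start of the path up to (but not including) the last step of the first long platform, including any short platforms encountered — maps correctly under reversal to what becomes the terminal part $\t(D^{\mathrm{rev}})$. If reversal turns out not to respect the "break before the last step of each long platform" convention cleanly (because that convention is not symmetric under the $N \leftrightarrow E$ swap), a fallback is to instead reverse only the east-run-length sequence $(\ell_1, \dots, \ell_m)$ together with the interleaved north-run-length sequence in a matched way, i.e.\ use the symmetry $D \mapsto \widehat{D}$ that reflects across the \emph{main} diagonal if $D$ were replaced by its "conjugate" — but the anti-diagonal reflection is the most likely to work. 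In any case the verification reduces to a finite case analysis on what the first and last segments look like, which is routine once the correct involution is pinned down.

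I would close by remarking that this is consistent with the symmetry $(\mna, \mnd)$ on $\S_n(231)$ in Theorem~\ref{thm:kz}(i) in spirit, though here the relevant symmetry lives on Dyck paths rather than on permutations; and I expect the resulting involution to be exactly path reversal, giving a one-line proof once the segmentation is understood.
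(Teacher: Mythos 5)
There is a genuine gap: the ``key point'' of your plan is false, and not merely because of boundary bookkeeping. The segmentation defining $\Comp(D)$ is phrased entirely in terms of platforms, i.e.\ maximal blocks of east steps, while your reversal (reverse the step word and exchange the two step types) turns east-runs into north-runs; hence $\Comp(D^{\mathrm{rev}})$ is governed by the north-run structure of $D$, which has no fixed relation to $\Comp(D)$. Concretely (using the paper's convention that every prefix has at least as many $N$'s as $E$'s and platforms are maximal $E$-blocks): the pyramid path $P=NNNEEE\in\D_3$ has a single long platform of length $3$, so $\Comp(P)=(2,1)$ and $P\in\EO_3$; but $P$ is fixed by reversal, so reversal cannot map $\EO_3$ into $\OE_3$. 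Worse, reversal does not even preserve the statistics $(\OP,\EP)$: for $D=NNENNEEE\in\D_4$ one has $\Comp(D)=(3,1)$, so $(\OP(D),\EP(D))=(2,0)$, while its reversal $D^{\mathrm{rev}}=NNNEENEE$ has $\Comp(D^{\mathrm{rev}})=(1,2,1)$ and $(\OP,\EP)=(2,1)$; the multiset of parts (equivalently, the ascending-run multiset of the corresponding $321$-avoiding permutation) is not preserved. Your fallback is too vague to repair this: reversing both run-length sequences simultaneously is exactly path reversal again, while reversing only the east-run lengths generally violates the lattice-path dominance condition. Indeed, the paper explicitly remarks, right after this lemma, that a direct bijective proof of $EO=OE$ on Dyck paths seems not easy, so ``a one-line proof once the segmentation is understood'' is not available along these lines.

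The paper's actual argument transports the problem back to permutations: by Lemma~\ref{lem:Kra}(b), $\Comp(\psi(\pi))$ is precisely the ascending-run composition of $\pi\in\S_n(321)$, so $(\OP,\EP)$ corresponds to $(\oar,\ear)$ and the classes $\EO,\OE$ are determined by the parities of the first and last ascending runs. The reverse--complement involution $\pi\mapsto\pi^{rc}$ preserves $321$-avoidance and preserves ascents and descents with positions reversed, hence it reverses the ascending-run composition: the multiset of runs (so $\oar$ and $\ear$) is unchanged while the initial and terminal runs are exchanged. Conjugating by $\psi$ gives the desired bijection between the $\EO$ and $\OE$ classes; note that the induced involution $\psi\circ({\cdot})^{rc}\circ\psi^{-1}$ on Dyck paths is \emph{not} path reversal, as the examples above show. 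If you want to salvage your write-up, replace the Dyck-path reversal by this permutation-level involution (your closing remark about the symmetry living on permutations rather than paths is, in fact, the correct instinct).
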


\begin{proof}
This follows from the bijection $\psi: \S_n(321)\rightarrow\D_n$ and the involution $\pi\mapsto\pi^{rc}$ on $\S_n(321)$, where $\pi^{rc}:=(n+1-\pi_n)(n+1-\pi_{n-1})\dots(n+1-\pi_1)$. 
\end{proof}

\begin{remark}
It seems not easy to prove~\eqref{EO=OE} bijectively in terms of Dyck paths directly. 
\end{remark}
Next we prove the following functional equations for $EE(t,x,y)$ and $EO(t,x,y)$. 
\begin{lemma}\label{pro2}
The generating function $EE=EE(t,x,y)$ and $EO=EO(t,x,y)$ satisfy the following functional equations:
\begin{equation}\label{eq:ee}
\begin{aligned}
EE=&\overbrace{\frac{y t^2}{1-t^2}}^{(1.1)}
+\overbrace{\frac{yt\cdot OE}{x}}^{(1.2)}
+\overbrace{\frac{yt}{x}\Big(OE-\frac{tx}{y}\big(EE-\frac{y t^2}{1-t^2}\big)\Big)\Big( 1+\frac{EE}{y}+\frac{OE}{y}\Big)}^{(2.1)}\\
&+\overbrace{\frac{yt}{x}
\Big(OO-\frac{tx}{1-t^2}-\frac{tx}{y}\cdot EO \Big)
\Big(\frac{EE}{y}-\frac{t^2}{1-t^2}+\frac{y\cdot OE }{x^2}+\frac{ty}{x(1-t^2) }\Big)}^{(2.2)}
\end{aligned}
\end{equation}
and
\begin{equation}\label{eq:eo}
\begin{aligned}
EO=&\overbrace{\frac{yt}{x}\Big(OO-\frac{tx}{1-t^2}\Big)}^{(1)}
+\overbrace{\frac{t}{x}\Big(OE-\frac{tx}{y}\big(EE-\frac{t^2 y}{1-t^2}\big)\Big)\big(EO+EE\big)}^{(2.1)}\\
&+\overbrace{\frac{yt}{x}\Big(OO-\frac{tx}{y}\cdot EO-\frac{tx}{1-t^2}\Big)\Big(\frac{1}{1-t^2}+\frac{EO}{y}+\frac{y\cdot OO}{x^2}-\frac{ty}{x(1-t^2)}\Big)}^{(2.2)}.
\end{aligned}
\end{equation}
\end{lemma}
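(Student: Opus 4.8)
\textbf{Proof strategy for Lemma~\ref{pro2}.}
The plan is to translate the two functional equations into the language of Dyck paths and prove them by the \emph{first return decomposition}, keeping careful track of how the initial and terminal parts of $\Comp(D)$ behave under concatenation. Recall that $\Comp(D)$ is read off by cutting $D$ right \emph{before} the last step of each long platform; a subtle point throughout is that a long platform can straddle the ``seam'' produced when two Dyck paths are glued, so the composition of a concatenation is \emph{not} simply the concatenation of the two compositions. I would first isolate the building blocks: the zigzag Dyck paths (those with no long platform), whose contribution is the geometric-type series $\frac{t^2}{1-t^2}$ with a single even part of size $2$ when read with the convention above — these account for the terms $\frac{yt^2}{1-t^2}$, $\frac{tx}{1-t^2}$, etc., that are being added and subtracted; and the ``primitive'' pieces $NDE\cdots$ coming off the first return.

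The core of the argument is the classical decomposition $D = N\,D_1\,E\,D_2$, where $D_1\in\D_{k}$ for some $k\ge 0$ and $D_2\in\D_{n-1-k}$, together with a bookkeeping of what happens at the two junction points. First I would fix $D\in\EE$ (so $\i(D)$ and $\t(D)$ are even) and enumerate the cases for how the first east step $E$ interacts with $D_1$ and $D_2$: whether $D_1$ is empty (giving a platform that starts the path, hence an initial-part contribution), whether that $E$ merges with the first platform of $D_2$ to lengthen it (which can flip the parity of $\i(D_2)$ when transported to $\i(D)$), and whether $D_1$ itself ends in a platform that swallows nothing or contributes to a long platform at the seam. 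Each case produces a product of two of the four series $EE,EO,OE,OO$, with a power of $t$ for the two extra steps $N,E$, a factor $x$ or $y$ recording the parity of the newly created or newly completed part, and sometimes a correction term obtained by \emph{subtracting off} the sub-family where a would-be platform is actually zigzag (these are exactly the $-\frac{tx}{1-t^2}$-type subtractions appearing inside the big parentheses, e.g.\ in $(2.2)$ of~\eqref{eq:ee} and in $(1)$ and $(2.2)$ of~\eqref{eq:eo}). The terms labelled $(1.1),(1.2),(1)$ handle the degenerate situations (empty $D_1$, or $D_2$ zigzag), and the terms $(2.1),(2.2)$ handle the generic ones, split further according to whether the seam creates a long platform. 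The division by $x$ or $y^{\pm1}$ or $x^2$ inside the parentheses is the algebraic shadow of ``a part that was counted as odd/even in the sub-path must be recounted after merging'': one removes the old weight and the correction $\frac{ty}{x(1-t^2)}$, $\frac{y\cdot OE}{x^2}$, etc., re-installs the right one.

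I would carry this out in the order: (i) verify the zigzag contributions and the empty-$D_1$ terms directly; (ii) set up the generic first-return decomposition and list exhaustively the seam behaviours (there are only a handful, controlled by whether $D_1$ ends in east steps and whether $D_2$ begins with east steps, refined by parity); (iii) for each seam type, write the generating-function contribution, using the four refined series and inserting the zigzag corrections so that no configuration is double-counted or omitted; (iv) collect terms according to the parity of $\i(D)$ and $\t(D)$ to land on $EE$ and on $EO$; and finally (v) invoke Lemma~\ref{eo=oe} to replace $OE$ by $EO$ (or vice versa) wherever the symmetric form is wanted, which is how~\eqref{EO=OE} feeds into the stated equations. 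The equation for $OO$ (and hence, with~\eqref{EO=OE}, a closed system for $G=EE+EO+OE+OO$) would be obtained by the same decomposition with the initial-part parity toggled; I would either state it in parallel or note that it follows mutatis mutandis.

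\textbf{The main obstacle.} The genuinely delicate part is the seam analysis: because $\Comp(D)$ is defined by cutting before the \emph{last} step of each long platform, gluing $N D_1 E D_2$ can (a) fuse a trailing platform of $D_1$, the bridging east step, and a leading platform of $D_2$ into one long platform, thereby deleting two cut points and altering parities in a non-local way, and (b) turn a length-one platform into a long one or vice versa. Getting the correction terms exactly right — matching every $\frac{tx}{1-t^2}$, $\frac{y\,OE}{x^2}$, $\frac{ty}{x(1-t^2)}$ down to the last sign and the last power of $x,y,t$ — is where all the bookkeeping risk lies; I would guard against errors by checking the identities to low order against the explicit values of $\sum_{D\in\D_n} x^{\OP(D)}y^{\EP(D)}$ for $n\le 5$.
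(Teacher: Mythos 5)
Your plan is essentially the paper's proof: the paper likewise argues by the first return decomposition (written there as $D=ED_LND_R$, the mirror image of your $N\,D_1\,E\,D_2$) and carries out exactly the parity-of-$\i(D)$/$\t(D)$ case analysis with the seam bookkeeping and the zigzag corrections of the form $\frac{tx}{1-t^2}$, $\frac{yt^2}{1-t^2}$ subtracted off inside the parentheses, so the approach matches. The only remarks are that your write-up stays at the strategy level while the substance of the lemma is the exhaustive term-by-term accounting of $(\OP,\EP)$ in each case, and that the symmetry \eqref{EO=OE} is not actually needed for these two equations (the paper invokes it only later when assembling $G=EE+2\,EO+OO$).
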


\begin{proof}
Consider the first return decomposition for Dyck paths. Every nonempty Dyck path $D$ can be decomposed uniquely according to the position of first return to $y=x$ as:
  $$D=ED_LND_R,$$
where $D_L$  and $D_R$ are smaller Dyck paths that are possibly empty. 
Given $D\in\EE$ with the above first return decomposition, we distinguish the following two cases.
\begin{itemize}
\item[(1)] If $D_L$ is empty (see Fig.~\ref{ee}~(a)), then we further distinguish two subcases.
   \begin{itemize}
      \item[(1.1)] When $D_R \in \OO$ and is zigzag, we know $D$ is also zigzag. This case contributes $\frac{y t^2}{1-t^2}$, which is the summand term $(1.1)$ in~\eqref{eq:ee}.
     \item[(1.2)] Otherwise, $D_R \in \OE$ and we have
                 $$\OP(D)=\OP(D_R)-1 \text{  and } \EP(D)= \EP(D_R)+1. $$
                 This case contributes $\frac{yt\cdot OE}{x}$, which is the summand term $(1.2)$ in~\eqref{eq:ee}.
   \end{itemize}
   \begin{figure}
\centering
$\begin{array}{cc}
\begin{tikzpicture}[scale=0.5]
\draw (0,0) --(1,0)--(1,1);
\draw[cyan,line width=1.5pt] (1,1)--(5,1)--(5,5)--(6,5)--(6,6);
\draw[black] (0,0) -- (6,6);
\draw[red,dotted,line width=1.5pt](4,-1)--(4,8);
\foreach \y in {1,...,5} {
    \fill (5,\y) circle (3pt); 
}
\foreach \y in {5,...,6} {
    \fill (6,\y) circle (3pt); 
}
\foreach \x in {1,...,4} {
    \fill (\x,1) circle (3pt);}
\foreach \x in {0,...,1} {
    \fill (\x,0) circle (3pt);}
\draw[fill=red] (1,1) circle [radius=4pt];
\draw[->={Latex[length=8mm]}, red, thick] (-0.5,1) -- (0.5,1) node[midway, above, sloped] {\resizebox{3cm}{!}{\parbox{3cm}{first return}}};

\node[above,text=cyan] at (3,3.5) {$D_R$};

\begin{scope}[xshift=80mm]
\draw (0,0) --(1,0) (5,4)--(5,5);
\draw[blue,line width=1.5pt] (1,0)--(3,0) -- (3,2) -- (5,2) -- (5,4);
\draw[cyan,line width=1.5pt] (5,5)--(6,5)--(6,6)--(7,6)--(7,7)--(9,7)--(9,9)--(10,9)--(10,10);
\foreach \x in {0,1,...,3} {
    \fill (\x,0) circle (3pt);
}
\foreach \x in {3,...,4} {
    \fill (\x,2) circle (3pt);
}
\foreach \x in {6,...,6} {
    \fill (\x,5) circle (3pt);
}
\foreach \y in {0,...,2} {
    \fill (3,\y) circle (3pt); 
}
\foreach \y in {2,...,5} {
    \fill (5,\y) circle (3pt); 
}

\foreach \x in {6,...,6} {
    \fill (\x,6) circle (3pt);}
\foreach \x in {6,...,7} {
    \fill (\x,6) circle (3pt);
}
\foreach \x in {7,...,9} {
    \fill (\x,7) circle (3pt);
}
\foreach \y in {7,...,9} {
    \fill (9,\y) circle (3pt); 
}
\foreach \y in {9,...,10} {
    \fill (10,\y) circle (3pt); 
}
\draw[fill=red] (5,5) circle [radius=4pt];

\draw[black,line width=1.25pt] (0,0) -- (5,5);
\draw[black,line width=1.25pt] (5,5) -- (10,10);
\draw[blue, line width=1.5pt,dashed](1,0)--(5,4);
\draw[red,dotted,line width=1.5pt](2,-1)--(2,8) (4,-1)--(4,8) 
(8,2)--(8,10);
\draw[->={Latex[length=8mm]}, red, thick] (3.5,5) -- (4.5,5) node[midway, above, sloped] {\resizebox{3cm}{!}{\parbox{3cm}{first return}}};
\node[above,text=cyan] at (6.5,7) {$D_R$};
\node[above,text=blue] at (5,0) {$D_L$};
\draw[decorate,decoration={brace,amplitude=10pt,mirror}] (1,-0.1) -- (3,-0.1) node[midway,below,yshift=-10pt] {$\geq 2$};

\end{scope}
\node at (2,-0.5) [below right] {$(a)$};
\node at (14,-0.5) [below right] {$(b)$};

\end{tikzpicture}
\end{array}$
\caption{Two cases for Dyck paths $D$ with $\i(D)$ even.\label{ee}}
\end{figure}
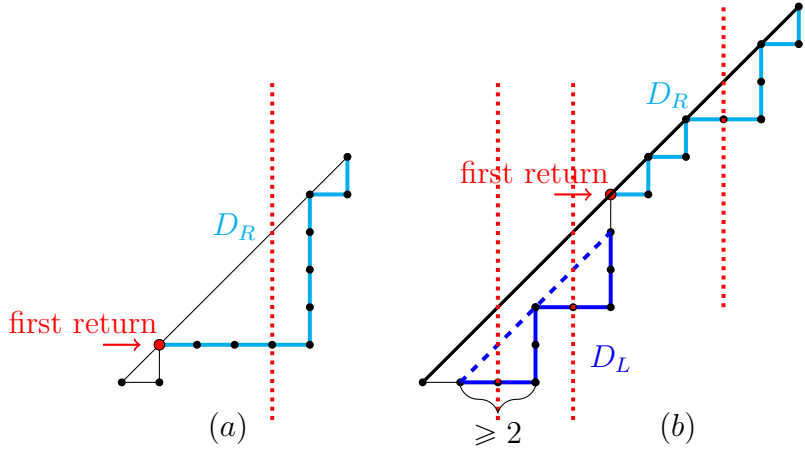
\item[(2)] If $D_L$ is nonempty (see Fig.~\ref{ee}~(b)), then we  further consider two subcases.
    \begin{itemize}
     \item[(2.1)]  If $D_L \in \OE$ and $D_L$ can not begin with $EN$ (otherwise, $\i(D)=1$, contradicting with $D
     \in\EE$),  then either  $\t(D_R)$  is even or  $D_R$ is empty. In such case, we have 
     $$\OP(D)=\OP(D_L)+\OP(D_R)-1$$ and 
     $$ \EP(D)=\left\{
             \begin{array}{ll}
                  \EP(D_L)+1&\mbox{if $D_R$ is empty},  \\
                  \EP(D_L)+\EP(D_R)&\mbox{if $\t(D_R)$  is even.}
            \end{array}
       \right.$$
Note that  the generating function for all $D_L\in\OE$ that begin with $EN$ is $\frac{tx}{y}(EE-\frac{yt^2}{1-t^2})$. Thus, this case contributes 
$$
\frac{yt}{x}\Big(OE-\frac{tx}{y}\big(EE-\frac{y t^2}{1-t^2}\big)\Big)\Big( 1+\frac{EE}{y}+\frac{OE}{y}\Big),
$$
which is the summand term $(2.1)$ in~\eqref{eq:ee}.
    
    \item[(2.2)] If $D_L \in \OO$ and $D_L$ can not begin with $EN$, then either $\t(D_R)$ is even or $D_R$ is zigzag of odd length.  We consider the following three cases.
    \begin{itemize}
         \item[a)] If $D_R \in \OE$,  then
          \begin{align*}
          &\OP(D)=\OP(D_L)+\OP(D_R)-3,\\
          &\EP(D)=\EP(D_L)+\OP(D_R)+2. 
          \end{align*}  
         \item[b)] If $D_R \in\EE$ and is not zigzag, then
         \begin{align*}
         &\OP(D)=\OP(D_L)+\OP(D_R)-1, \\
         &\EP(D)=\EP(D_L)+\OP(D_R).
         \end{align*}
         \item[c)] If $D_R$ is zigzag of odd length, then
         \begin{align*}
         & \OP(D)=\OP(D_L)+\OP(D_R)-3,\\
         &\EP(D)=\EP(D_L)+\OP(D_R).
        \end{align*}
        \end{itemize}
        Note that  the generating function for all $D_L\in\OO$ that begin with $EN$ is $\frac{tx}{1-t^2}+\frac{tx}{y}\cdot EO$. Thus, this case contributes 
        $$
        \frac{yt}{x}
\Big(OO-\frac{tx}{1-t^2}-\frac{tx}{y}\cdot EO \Big)
\Big(\frac{EE}{y}-\frac{t^2}{1-t^2}+\frac{y\cdot OE }{x^2}+\frac{ty}{x(1-t^2) }\Big),
        $$
        which is the summand term $(2.2)$ in~\eqref{eq:ee}.
    \end{itemize}
\end{itemize}
 Summing over all the above cases gives~\eqref{eq:ee}.
  
By utilizing similar discussions as above, we get Eq.~\eqref{eq:eo} for $EO(t,x,y)$.
\end{proof}

Finally, we prove the following functional equation for $OO(t,x,y)$, which is more involved. 
 
\begin{lemma}\label{pro3}
The generating function $OO=OO(t,x,y)$ satisfies the functional equation
\begin{equation}\label{eq:o}
\begin{aligned}
OO=&\frac{tx}{1-t^2}
+\frac{tx}{y}\cdot EO
+\frac{t^2x^2}{y}\Big(EO+\frac{y}{1-t^2}\Big)+t^2y\Big(OO-\frac{tx}{1-t^2}\Big)\\
&+\frac{t^2x^2}{y}\Big(EO+\frac{t^2y}{1-t^2}\Big)\Big(\frac{EO}{y}+\frac{1}{1-t^2}+\frac{y\cdot OO}{x^2}-\frac{ty}{x(1-t^2)}\Big)\\
&+\frac{t^2x^2}{y^2}\Big(EE-\frac{t^2y}{1-t^2}\Big)\big(EO+OO\big)\\
&+t^2y\Big(OO-\frac{tx}{1-t^2}\Big)\Big(\frac{EO}{y}+\frac{1}{1-t^2}+\frac{y\cdot OO}{x^2}-\frac{ty}{x(1-t^2)}\Big)\\
&+t^2\Big(OE+\frac{tx}{1-t^2}\Big)\big(EO+OO\big)\\
&+\frac{tx}{y^2}\Big(EE-\frac{t^2y}{1-t^2}-\frac{ty}{x}\cdot OE\Big)\big(OO+EO\big)\\
&+\frac{tx}{y}\Big(EO-\frac{ty}{x}\big(OO-\frac{tx}{1-t^2}\big)\Big)\Big(\frac{EO}{y}+\frac{1}{1-t^2}+\frac{y \cdot OO}{x^2}-\frac{ty}{x(1-t^2)}\Big).
\end{aligned}
\end{equation}
\end{lemma}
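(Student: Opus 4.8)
The plan is to run, over the class $\OO$, the same first-return decomposition of Dyck paths that produced~\eqref{eq:ee} and~\eqref{eq:eo} in the proof of Lemma~\ref{pro2}; equation~\eqref{eq:o} is ``more involved'' only in that its case tree is deeper, not in any new idea. Recall that $\OO$ gathers the $D\in\D_n$ for which the initial part $\i(D)$ and the terminal part $\t(D)$ of the recording composition $\Comp(D)$ are both odd. Decomposing a nonempty $D\in\OO$ at its first return to the diagonal --- a leading step, a possibly empty sub-path $D_L$ that returns to the diagonal, a pivot step, and a possibly empty Dyck path $D_R$, exactly as in Lemma~\ref{pro2}'s proof --- the entire computation is governed by what the platforms do at the two junctions: the leading portion of $D$ either forms a short platform on its own or is swallowed by a leading long platform carried by $D_L$ (the ``$D_L$ begins with $EN$'' situation there), while the pivot step separates $D_L$ from $D_R$, so that the last part of $\Comp$ of the left block and the first part $\i(D_R)$ of $\Comp(D_R)$ fuse into one part of $\Comp(D)$ whose parity is the sum of the two fused parities. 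The requirement that $\i(D)$ be odd then limits the admissible $D_L$ (roughly: $D_L$ empty with the initial part controlled by $D_R$, or $D_L$ with odd initial part, or $D_L$ of the special ``$EN$'' shape which forces $\i(D)=1$), in exact parallel with the restriction $\i(D)\equiv 0$ imposed in~\eqref{eq:ee}.

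Concretely I would organize the argument as a case tree: split first on whether $D_L$ is empty; next on the parity class of $D_L$ among $\EE,\EO,\OE,\OO$, peeling off --- just as the term $\frac{tx}{y}\bigl(EE-\frac{yt^{2}}{1-t^{2}}\bigr)$ is peeled off in~\eqref{eq:ee} --- the several sub-generating-functions that count the $D_L$ of ``$EN$'' shape; and finally on the parity class of $D_R$, with the extra refinement that a zigzag $D_R$ of odd length has to be handled separately. The corrections produced by these peelings are exactly the ones visible inside the factors of~\eqref{eq:o} (for instance $OO-\frac{tx}{1-t^{2}}$, $EE-\frac{t^{2}y}{1-t^{2}}$, $EE-\frac{t^{2}y}{1-t^{2}}-\frac{ty}{x}OE$ and $EO-\frac{ty}{x}\bigl(OO-\frac{tx}{1-t^{2}}\bigr)$), as are the $\frac{1}{1-t^{2}}$- and $\frac{ty}{x(1-t^{2})}$-type terms coming from the zigzag refinement (compare the $\frac{yt^{2}}{1-t^{2}}$ and $\frac{t^{2}}{1-t^{2}}$ in~\eqref{eq:ee}). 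Each leaf of the tree contributes one product of corrected generating functions, weighted by a monomial $t^{a}x^{b}y^{c}$ recording the net changes in order, in $\OP$ and in $\EP$ produced by the two fusions; summing over all leaves yields exactly the ten summands displayed in~\eqref{eq:o}, and the symmetry~\eqref{EO=OE} of Lemma~\ref{eo=oe} spares us a separate treatment of $OE$.

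The only genuine difficulty is the junction bookkeeping: across the many subcases one must decide correctly whether the fused part of $\Comp(D)$ is long --- hence creates a break --- and how its parity, together with those of the untouched parts, moves the pair $(\OP,\EP)$; this is exactly where the zigzag-of-odd-length exceptions intervene and where a parity or sign slip is easy. I would guard against such slips by extracting the coefficient of $t^{n}$ for small $n$ from~\eqref{eq:ee},~\eqref{eq:eo},~\eqref{eq:o} and matching it against a direct enumeration of $\D_{n}$ (equivalently of $\S_{n}(321)$ via Krattenthaler's bijection $\psi$), and by checking that~\eqref{EO=OE} holds and that $EE$, $EO$, $OO$ vanish at $t=0$. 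Once~\eqref{eq:ee},~\eqref{eq:eo} and~\eqref{eq:o} are available together with $G=EE+EO+OE+OO$, eliminating $EE,EO,OE,OO$ from this polynomial system gives the degree-$4$ algebraic equation for $G$ announced in Theorem~\ref{thm:gf}~(i), and putting $y=1$ collapses it to the displayed quartic for $A=G(t,x,1)$, which settles Problem~\ref{PB:kz}.
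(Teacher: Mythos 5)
You have correctly identified the paper's method --- the first-return decomposition $D=ED_LND_R$, a case tree governed by the parity classes of $D_L$ and $D_R$, with the $EN$-initial and zigzag paths peeled off as the corrections visible in \eqref{eq:o} --- so the route is the same as the paper's. But as a proof of Lemma~\ref{pro3} there is a genuine gap: the lemma \emph{is} the exact form of the ten summands, and you never derive a single one of them. The paper's proof consists precisely of the bookkeeping you defer: in each leaf one must determine which parity classes of $D_L$ (or, when $\i(D)=1$, of $D_L'$ in the second-level decomposition $D_L=END_L'$) are compatible with which classes of $D_R$, and compute the exact shifts of $(\OP,\EP)$ --- e.g.\ $\OP(D)=\OP(D_L')+\OP(D_R)+2$ and $\EP(D)=\EP(D_L')+\EP(D_R)-2$ when $D_L'\in\EE$ is not zigzag --- before each leaf can be translated into a monomial-weighted product. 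Declaring that ``summing over all leaves yields exactly the ten summands'' assumes the conclusion, and the proposed safeguards (matching coefficients of $t^n$ for small $n$, checking \eqref{EO=OE} and vanishing at $t=0$) can falsify but never establish the identity.

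Moreover, the one place where you do commit to a detail shows the danger you yourself flag: for $D\in\OO$ with $D_L$ nonempty and not of $EN$ shape, the prepended east step gives $\i(D)=\i(D_L)+1$, so the admissible $D_L$ have \emph{even} initial part (classes $\EE$ and $\EO$, minus their $EN$-initial members), not ``odd initial part'' as you state; the parallel with \eqref{eq:ee} flips this parity condition rather than preserving it, which is exactly why the paper's case $\i(D)\ge 3$ involves the factors $EE-\frac{t^2y}{1-t^2}-\frac{ty}{x}\cdot OE$ and $EO-\frac{ty}{x}\bigl(OO-\frac{tx}{1-t^2}\bigr)$. Carrying out your plan as written would therefore produce a different (and incorrect) functional equation. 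To close the gap, run the three top-level cases the paper uses ($D_L$ empty; $D_L$ nonempty with $\i(D)=1$, forcing $D_L=END_L'$ and a four-way split on $D_L'$; $D_L$ nonempty with $\i(D)\ge 3$), each with its sub-split on $D_R$ including the odd-length-zigzag exception, and record the $(\OP,\EP)$ shifts case by case.
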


\begin{proof}
Consider the first return decomposition for $D\in\OO$:
$$D=ED_LND_R,$$
where $D_L$  and $D_R$ are smaller Dyck paths that are possibly empty.
We shall discuss the following three cases.
\begin{itemize}
\item[(1)] If $D_L$ is empty, then $D_R$ have the following two possible conditions:
   \begin{itemize}
      \item[(1.1)] If $D_R$ is zigzag (or empty) of even length, then $D$ is  zigzag of odd length.  This case contributes $\frac{tx}{1-t^2}$ to~\eqref{eq:o}.
      \item[(1.2)] Otherwise, $D_R \in \EO$ and we have
                 $$\OP(D)=\OP(D_R)+1 \text{  and } \EP(D)= \EP(D_R)-1.$$
                 Thus, this case contributes $\frac{tx}{y}\cdot EO$ to~\eqref{eq:o}. 
   \end{itemize}
   
   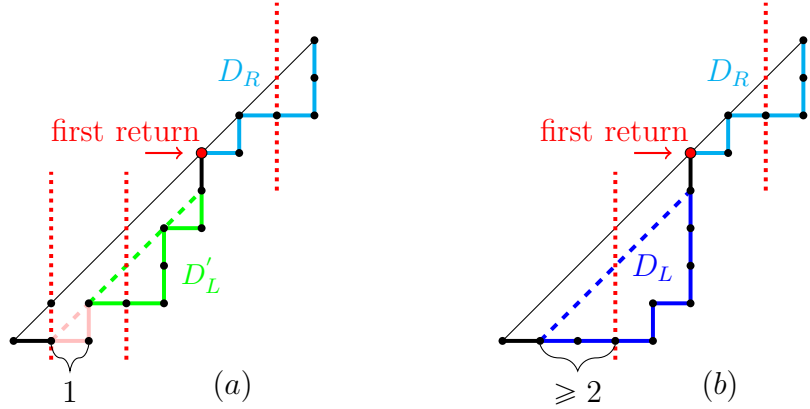
\begin{figure}
\centering
$\begin{array}{cc}
\begin{tikzpicture}[scale=0.5]
\draw[black,line width=1.5pt] (0,0) --(1,0) (5,4)--(5,5);
\draw[pink,line width=1.5pt] (1,0) --(2,0)--(2,1);
\draw[green,line width=1.5pt] (2,1)--(4,1)--(4,3)--(5,3)--(5,4);
\draw[cyan,line width=1.5pt] (5,5) --(6,5)--(6,6)--(8,6)--(8,8);
\draw[black] (0,0) -- (8,8);
\draw[pink, line width=1.5pt,dashed](1,0)--(2,1);
\draw[green, line width=1.5pt,dashed](2,1)--(5,4);
\draw[red,dotted,line width=1.5pt](1,-0.5)--(1,4.5) ;
\draw[red,dotted,line width=1.5pt](3,-0.5)--(3,4.5) ;
\draw[red,dotted,line width=1.5pt](7,4)--(7,9) ;
\foreach \y in {0,1,2} {
    \fill (\y,0) circle (3pt); 
}
\foreach \y in {2,...,4} {
    \fill (\y,1) circle (3pt); 
}
\foreach \x in {2,3} {
    \fill (4,\x) circle (3pt);
    } 
\foreach \x in {4,3} {
    \fill (5,\x) circle (3pt);
    }
\foreach \x in {6,7} {
    \fill (\x,\x-1) circle (3pt);
    }
\foreach \x in {6,8,} {
    \fill (\x,\x) circle (3pt);
    }   
\foreach \x in {6,7} {
    \fill (8,\x) circle (3pt);
    }
\draw[fill=red] (5,5) circle [radius=4pt];

\draw[->={Latex[length=8mm]}, red, thick] (3.5,5) -- (4.5,5) node[midway, above, sloped] {\resizebox{3cm}{!}{\parbox{3cm}{first return}}};

\node[above,text=cyan] at (6,6.5) {$D_R$};
\node[above,text=green] at (5,1) {\small{$D^{'}_L$}};
\draw[decorate,decoration={brace,amplitude=10pt,mirror}] (1,-0.1) -- (2,-0.1) node[midway,below,yshift=-10pt] {$ 1$};


\begin{scope}[xshift=130mm]
\draw[black,line width=1.5pt] (0,0) --(1,0) (5,4)--(5,5);
\draw[blue,line width=1.5pt] (1,0) --(4,0)--(4,1)--(5,1)--(5,4);
\draw[cyan,line width=1.5pt] (5,5) --(6,5)--(6,6)--(8,6)--(8,8);
\draw[black] (0,0) -- (8,8);
\draw[blue, line width=1.5pt,dashed](1,0)--(5,4);
\draw[red,dotted,line width=1.5pt](3,-0.5)--(3,4.5) ;
\draw[red,dotted,line width=1.5pt](7,4)--(7,9) ;
\foreach \y in {0,1,2,3,4} {
    \fill (\y,0) circle (3pt); 
}
\foreach \y in {4,5} {
    \fill (\y,1) circle (3pt); 
}
\foreach \x in {2,4,3} {
    \fill (5,\x) circle (3pt);
    }
\foreach \x in {6,7} {
    \fill (\x,\x-1) circle (3pt);
    }
\foreach \x in {6,8} {
    \fill (\x,\x) circle (3pt);
    }   
\foreach \x in {6,7} {
    \fill (8,\x) circle (3pt);
    }
\draw[fill=red] (5,5) circle [radius=4pt];

\draw[->={Latex[length=8mm]}, red, thick] (3.5,5) -- (4.5,5) node[midway, above, sloped] {\resizebox{3cm}{!}{\parbox{3cm}{first return}}};

\node[above,text=cyan] at (6,6.5) {$D_R$};
\node[above,text=blue] at (4,1.3) {$D_L$};
\draw[decorate,decoration={brace,amplitude=10pt,mirror}] (1,-0.1) -- (3,-0.1) node[midway,below,yshift=-10pt] {$\geq 2$};

\end{scope}
\node at (5,-0.5) [below right] {$(a)$};
\node at (18,-0.5) [below right] {$(b)$};

\end{tikzpicture}
\end{array}$
\caption{Two cases for Dyck paths in $\OO$.\label{oo}}
\end{figure}

\item[(2)]   If $D_L$ is nonempty and $\i(D)=1$, then we consider the first return decomposition of $D_L$ as: 
            $$ D_L = END'_L,$$
where $D_L'$ may be empty; see Fig.~\ref{oo}~$(a)$. In this case, we distinguish the following two cases of $D'_L$.
      \begin{itemize}
       \item[(2.1)]  If $D'_L$ is empty, then either $\t(D_R)$ is odd or $D_R$ is empty.  We further distinguish three cases:
       \begin{itemize}
       \item[a)] If $D_R$ is empty, then 
             $$\OP(D)=2 \quad\text{and}\quad \EP(D)=0.$$
      \item[b)]  If $D_R \in \EO$ or $D_R$ is zigzag of even length, then
             $$\OP(D)=\OP(D_R)+2 \quad \text{and}\quad
             \EP(D)= \EP(D_R)-1.
             $$
       \item[c)] If $D_R \in \OO$,  then $D_R$ can not be  zigzag, otherwise $\t(D)$ will be even. We have 
             $$\OP(D)=\OP(D_R) \quad\text{and}\quad\EP(D)=\EP(D_R)+1
             $$ 
             \end{itemize}
             In summary, case~(2.1) contributes  
             $$t^2x^2+\frac{t^2x^2}{y}\Big(EO+\frac{t^2y}{1-t^2}\Big)+t^2y\Big(OO-\frac{tx}{1-t^2}\Big)$$ to~\eqref{eq:o}.
     \item[(2.2)] If $D'_L$ is nonempty, then we further distinguish the following  four cases.  
         \begin{itemize}
          \item[a)] If $D'_L \in \EO$ or $D'_L$ is zigzag with even length, then
           we need to consider three cases of $D_R$: 
         \begin{itemize}
         \item[(i)] if $D_R$ is empty, then 
         $$\OP(D)=\OP(D_L')+2\quad\text{and}\quad\EP(D)=\EP(D_L')-1;$$
         \item[(ii)] if $D_R \in \EO$ or $D_R$ is zigzag with even length, then 
          \begin{align*}
            &\OP(D)=\OP(D_L')+\OP(D_R)+2, \\
            &\EP(D)=\EP(D_L)+\EP(D_R)-2; 
           \end{align*}
        \item[(iii)] if $D_R\in \OO$ and $D_R$ is not zigzag, then 
           \begin{align*}
             &\OP(D)= \OP(D_L')+\OP(D_R)\\
             &\EP(D)=\EP(D_L')+\EP(D_R).
           \end{align*}
       \end{itemize}
       Thus, this case contributes 
       $$
       \quad\qquad\frac{t^2x^2}{y}\Big(EO+\frac{t^2y}{1-t^2}\Big)\Big(1+\frac{EO}{y}+\frac{t^2}{1-t^2}+\frac{y\cdot OO}{x^2}-\frac{ty}{x(1-t^2)}\Big)
       $$
       to~\eqref{eq:o}.
        \item[b)] If $D'_L \in \EE$ and $D_L'$ is not zigzag of even length,  then $\t(D_R)$ is odd. So  $D_R \in \EO$ or $D_R \in \OO$ and  we have
              \begin{align*}
                &\OP(D)=\OP(D_L')+\OP(D_R)+2\\
                &\EP(D)=\EP(D_L')+\EP(D_R)-2.
              \end{align*}
Thus,  this case contributes 
 $$
 \frac{t^2x^2}{y^2}\Big(EE-\frac{t^2y}{1-t^2}\Big)\big(EO+OO\big)
 $$
         to~\eqref{eq:o}.
         \item[c)] If $D'_L \in\OO$ and $D'_L$ is not  zigzag of odd length, then after considering the same three cases of $D_R$ as in case~a), we conclude that this case contributes 
         $$
         \quad\qquad t^2y\Big(OO-\frac{tx}{1-t^2}\Big)\Big(1+\frac{EO}{y}+\frac{t^2}{1-t^2}+\frac{y\cdot OO}{x^2}-\frac{ty}{x(1-t^2)}\Big). 
              $$

          \item[d)] If $D'_L \in \OE$ or $D'_L$ is zigzag of odd length, then $\t(D_R)$ is odd. So  $D_R \in \EO$ or $D_R \in \OO$ and  we have
              \begin{align*}
                &\OP(D)=\OP(D_L')+\OP(D_R),\\
                &\EP(D)=\EP(D_L')+\EP(D_R).
              \end{align*}
 Thus, this case contributes 
 $$
t^2\Big(OE+\frac{tx}{1-t^2}\Big)\big(EO+OO\big)
 $$
         to~\eqref{eq:o}.
        \end{itemize}
      \end{itemize}  
\item[(3)] If $D_L$ is nonempty and $\i(D)\geq3$ (see Fig.~\ref{oo}~(b)), then after similar discussions  as in the proof of  case~(2) in Lemma~\ref{pro2}, we conclude that this case contributions 
\begin{align*}
&\frac{tx}{y^2}\Big(EE-\frac{t^2y}{1-t^2}-\frac{ty}{x}\cdot OE\Big)\big(OO+EO\big)\\
&+\frac{tx}{y}\Big(EO-\frac{ty}{x}\big(OO-\frac{tx}{1-t^2}\big)\Big)\Big(\frac{EO}{y}+\frac{1}{1-t^2}+\frac{y \cdot OO}{x^2}-\frac{ty}{x(1-t^2)}\Big)
\end{align*}
  to~\eqref{eq:o}.
\end{itemize}

Summing over all the above three cases yields~\eqref{eq:o}.
\end{proof}

We can now prove Theorem~\ref{thm:gf}~(i).
\begin{proof}[{\bf Proof of Theorem~\ref{thm:gf}~(i)}]
By Lemma~\ref{eo=oe}, the generating functon $G(t,x,y)$ satisfies the functional equation
\begin{equation}\label{eq:G}
G= EE + 2\cdot EO + OO.
\end{equation}
Utilizing the Gr\"obner basis method for solving the system of functional equations~\eqref{eq:ee}, \eqref{eq:eo}, \eqref{eq:o} and~\eqref{eq:G}, we can prove  that $G(t,x,y)$ satisfies the algebraic equation of degree $4$ in~\eqref{alg:gf1}. This can be done using Maple (version 2022 or higher) with the following codes:
$$
\begin{aligned}
&\text{{\tt with(Groebner)}};\\
&{\tt A:= [f_1, f_2,f_3,f_4]};\\
&\text{{\tt Basis(A,\,plex(EO,EE,OO,G))}};
\end{aligned}
$$
where $f_1=0, f_2=0, f_3=0$, and $f_4=0$ are the functional equations~\eqref{eq:ee}, \eqref{eq:eo}, \eqref{eq:o} and~\eqref{eq:G}, respectively. 
\end{proof}

\subsection{Generating function for the statistic pair $(\lpk_o,\lpk_e)$ over $\S_n(321)$}
Recall that 
$$
M(t,x,y)=\sum_{n\geq1}t^n\sum_{\pi\in\S_n(321)}x^{\lpk_e(\pi)}y^{\lpk_o(\pi)}.
$$
By~\eqref{eo:lpk}, we have 
$$
M(t,x,y)=\sum_{n\geq1}t^n\sum_{\pi\in\S_n(231)}x^{\lpk_e(\pi)}y^{\lpk_o(\pi)}.
$$
Basing on this interpretation of $M(t,x,y)$,  we shall  prove Theorem~\ref{thm:gf}~(ii) by using the first letter decomposition of stack-sortable permutations in Lemma~\ref{dec:231}.

Give $\pi\in\S_n$, define two statistics:
\begin{itemize}
\item $\pk_e(\pi):=|\{2\leq i<n: \pi_{i-1}<\pi_i>\pi_{i+1}\text{ and }\pi_i\text{ is even}\}|$, the number of {\em even peaks }of $\pi$;
\item $\pk_o(\pi):=|\{2\leq i<n: \pi_{i-1}<\pi_i>\pi_{i+1}\text{ and }\pi_i\text{ is odd}\}|$, the number of {\em odd peaks }of $\pi$.
\end{itemize}
Introduce four generating functions
\begin{align*}
LE(t,x,y) &= \sum_{n\geq1}t^{2n}\sum_{\pi\in\S_{2n}(231)}x^{\lpk_e(\pi)}y^{\lpk_o(\pi)},\\
LO(t,x,y)& = \sum_{n\geq0}t^{2n+1}\sum_{\pi\in\S_{2n+1}(231)}x^{\lpk_e(\pi)}y^{\lpk_o(\pi)},\\
E(t,x,y) &=  \sum_{n\geq1}t^{2n}\sum_{\pi\in\S_{2n}(231)}x^{\pk_e(\pi)}y^{\pk_o(\pi)},\\
O(t,x,y) &= \sum_{n\geq0}t^{2n+1}\sum_{\pi\in\S_{2n+1}(231)}x^{\pk_e(\pi)}y^{\pk_o(\pi)}.
\end{align*}
 
\begin{lemma} The generating function $M(t,x,y)$ satisfies the following system of functional equations
\begin{equation}\label{eq:LE}
\begin{cases}
\,\, M=LE+LO,\\
\,\,LE=tx\cdot O\cdot(1+LE)+ t\cdot \widetilde{LO}+ty\cdot E\cdot \widetilde{LO},\\
\,\,LO= tx\cdot O \cdot LO+t(1+\widetilde{LE})+ty\cdot E\cdot(1+\widetilde{LE}),\\
\,\,E= t\cdot O\cdot(1+LE)+t(1+E)\cdot\widetilde{LO},\\
\,\,O=  t\cdot O \cdot LO+ t(1+E)(1+\widetilde{LE}),
\end{cases}
\end{equation}
where $\widetilde{LE}(t,x,y)=LE(t,y,x)$ and $\widetilde{LO}(t,x,y)=LO(t,y,x)$.
\end{lemma}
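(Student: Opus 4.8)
The plan is to iterate the first letter decomposition of Lemma~\ref{dec:231} on the $231$-avoiding side, using the expression $M(t,x,y)=\sum_{n\ge1}t^n\sum_{\pi\in\S_n(231)}x^{\lpk_e(\pi)}y^{\lpk_o(\pi)}$ furnished above by~\eqref{eo:lpk}. Splitting this sum according to the parity of $n$ gives the first equation $M=LE+LO$ immediately, so the content lies in the four equations for $LE$, $LO$, $E$, $O$; these I would derive by tracking how the set-valued statistics $\LPK$ and $\pk$ transform under the decomposition $\pi=(k\cdot\pi')\oplus\pi''$, where $k=\pi_1$, $\pi'\in\S_{k-1}(231)$ and $\pi''\in\S_{n-k}(231)$.

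The crux is the following pair of identities, both proved by direct inspection. If $\pi'$ is empty, so $\pi=1\oplus\pi''$, then position $1$ carries the value $1$ and is not a left peak, while every entry of $\pi''$ is merely shifted up by $1$ without altering its peak status; hence $\LPK(\pi)=\LPK(\pi'')+1$ and $\pk(\pi)=\LPK(\pi'')+1$. If $\pi'$ is nonempty, then $\pi_1=k$ is automatically a left peak of $\pi$ (its successor lies in $\{1,\dots,k-1\}$); prepending the maximal value $k$ to $\pi'$ destroys any peak at the old first entry of $\pi'$ and creates no peak in positions $2$ through $k$ beyond the interior peaks already present in $\pi'$; position $k$ of $\pi$ is never a left peak (it is an ascent of $\pi$, or the last position); and the block $\pi''$ keeps its peak pattern with all values shifted up by $k$. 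Hence
\[
\LPK(\pi)=\{k\}\cup\pk(\pi')\cup\bigl(\LPK(\pi'')+k\bigr),\qquad
\pk(\pi)=\pk(\pi')\cup\bigl(\LPK(\pi'')+k\bigr).
\]
It is the \emph{interior} peaks $\pk(\pi')$, rather than $\LPK(\pi')$, that appear here, precisely because the old first entry of $\pi'$ now sits in the interior of $\pi$ beneath the prepended $k$.

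Granting these identities, each of the four series satisfies its corresponding equation, which one verifies by the routine induction on $n$. The only delicate point is parity bookkeeping. Shifting the values of $\pi''$ by $k$ preserves parities when $k$ is even and interchanges even with odd when $k$ is odd; since $k=|\pi'|+1$, this is governed by the parity of $|\pi'|$, which is exactly why the reflected series $\widetilde{LE}(t,x,y)=LE(t,y,x)$ and $\widetilde{LO}(t,x,y)=LO(t,y,x)$ enter the contribution of $\pi''$. Moreover $|\pi|=|\pi'|+1+|\pi''|$, so once the parities of $|\pi|$ and of $k$ are fixed, the parity of $|\pi''|$ is forced; running through the cases $\pi'$ empty versus $\pi'$ nonempty, and in the latter case $k$ even versus $k$ odd, while recalling that an odd-length block is automatically nonempty but an even-length block may be empty (contributing the constant $1$, which is fixed under the reflection), one reads off exactly the equations for $LE$, $LO$, $E$ and $O$ in~\eqref{eq:LE}. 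For instance, in the $LE$ equation the branch $\pi'\neq\emptyset$, $k$ even forces $|\pi'|$ odd and $|\pi''|$ even, the value $k$ contributes a factor $x$, the block $\pi'$ contributes a factor $O$ (read with the statistic $\pk$), and $\pi''$ contributes $1+LE$, giving the term $tx\cdot O\cdot(1+LE)$.

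The main obstacle is organizational rather than conceptual: one must keep straight, all at once, (i) which of $\pi'$, $\pi''$ may be empty in each branch, (ii) the parity of the prepended value $k$ and the resulting swap $x\leftrightarrow y$ on the $\pi''$-factor, and (iii) the parity of $|\pi''|$ forced by the parities of $|\pi|$ and $k$; a slip in any one of these produces a structurally similar but incorrect system. I would therefore cross-check the final system against a direct enumeration of $\S_n(231)$ by $(\lpk_e,\lpk_o)$ for small $n$ before feeding it to the elimination that yields the degree-six algebraic equation for $M$ in Theorem~\ref{thm:gf}(ii).
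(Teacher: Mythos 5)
Your proposal is correct and follows essentially the same route as the paper: the first letter decomposition $\pi=(k\cdot\pi')\oplus\pi''$ of Lemma~\ref{dec:231}, the observation that the left peaks of $\pi$ split into $\{k\}$ (when $\pi'\neq\emptyset$), the interior peaks of $\pi'$, and the left peaks of $\pi''$ shifted by $k$ (with the analogous identity for $\pk(\pi)$), and the parity bookkeeping in which the shift by $k$ swaps $x\leftrightarrow y$ exactly when $k$ is odd, producing $\widetilde{LE}$ and $\widetilde{LO}$. The paper records the same recurrences coordinate-wise in terms of $\lpk_e,\lpk_o,\pk_e,\pk_o$ with the case split on the parity of $k$ (and $k=1$ separately), but the content and the resulting translation into the system~\eqref{eq:LE} are identical to yours.
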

\begin{proof}

Given a stack-sortable permutation $\pi$, we consider its first letter decomposition (see Lemma~\ref{dec:231})
$$
\pi=(k\cdot\pi')\oplus\pi'',
$$ 
where $\pi'$ and $\pi''$ are smaller stack-sortable permutations (possibly empty) and $k-1$ is the length of $\pi'$. 
With this decomposition, we have
\[
\lpk_e(\pi) = 
\begin{cases}
   \,\, 1+\pk_e(\pi')+\lpk_e(\pi'') & \text{ if $k$ is even}, \\
   \,\, \pk_e(\pi')+\lpk_o(\pi'') & \text{ if $k$ is odd;} 
\end{cases}
\]
\[
\lpk_o(\pi) = 
\begin{cases}
    \,\,\pk_o(\pi')+\lpk_o(\pi'') & \text{ if $k$ is even}, \\
    \,\,\lpk_e(\pi'') & \text{ if $k=1$},\\
    \,\,1+\pk_o(\pi')+\lpk_e(\pi'') & \text{ otherwise}; 
\end{cases}
\]
\[
\pk_e(\pi) = 
\begin{cases}
   \,\, \pk_e(\pi')+\lpk_e(\pi'') & \text{ if $k$ is even}, \\
   \,\, \pk_e(\pi')+\lpk_o(\pi'') & \text{ if $k$ is odd;} 
\end{cases}
\]
and
\[
\pk_o(\pi) = 
\begin{cases}
   \,\, \pk_o(\pi')+\lpk_o(\pi'') & \text{ if $k$ is even}, \\
   \,\, \pk_o(\pi')+\lpk_e(\pi'') & \text{ if $k$ is odd.} 
\end{cases}
\]
Turning the above decomposition into generating functions yields~\eqref{eq:LE}. 
\end{proof}

\begin{proof}[{\bf Proof of Theorem~\ref{thm:gf}~(ii)}]
Utilizing the Gr\"obner basis method for the system~\eqref{eq:LE} of functional equations, we can prove that $M(t,x,y)$ satisfies the algebraic equation of degree $6$ in~\eqref{alg:gf2} via Maple. Note that system~\eqref{eq:LE} actually contains $9$ functional equations, where the four extra ones can be obtained from the last four ones in~\eqref{eq:LE} by exchanging the variables $x$ and $y$. 
\end{proof}

\newpage
\vskip0.1in
\begin{center}
\large{\bf Appendix. Two algebraic equations}
\end{center}

\vskip0.1in

The generating function $G=G(t,x,y)$ satisfies the algebraic equation
\begin{equation}\label{alg:gf1}
\a_4G^4+\a_3G^3+\a_2G^2+\a_1G+\a_0=0,
\end{equation}
where the coefficients are 
\begin{align*}
\a_4&= {t}^{3} ({t}^{2}{y}^{2}+4\,tx+4)( y{t}^{2}-{t}^{2}
+tx+1) ^{2},\\
\a_3&= 2\,{t}^{2}( {t}^{2}{y}^{2}+4\,tx+4)( y{t}^{2}-{t}^
{2}+tx+1)( 2\,{t}^{3}{y}^{2}-2\,{t}^{3}y+2\,{t}^{2}xy-t{
x}^{2}+t{y}^{2}+ty-x), \\
\a_2&=t\big( 6y^{4}(y-1)^{2}t^{8} + 12xy^{2}(y-1)(y^{2}+2y-2)t^{7} - 2t^{6}(x^{4}y^{2}-2x^{2}y^{4}-2y^{6}-3x^{4}y\\
&\quad-23x^{2}y^{3}-2y^{5}+2x^{4}+22x^{2}y^{2}-8y^{4}+24y^{3}-12y^{2}) 
-2t^{5}x(x^{4}y+x^{2}y^{3}-2y^{5}-2x^{4}\\
&\quad -12y^{4}-14x^{2}y-28y^{3}+4x^{2}+36y^{2}) 
+ t^{4}(x^{4}y^{2}-2x^{2}y^{4}+y^{6}-21x^{4}y+15x^{2}y^{3}\\
&\quad+2y^{5}+11x^{4}+11x^{2}y^{2}+21y^{4}+38x^{2}y+10y^{3}-4x^{2}-28y^{2}) + t^{3}x(5x^{4}+2y^{4}\\
&\quad+26y^{3}+10x^{2}+18y^{2}+16y-7x^{2}y^{2}-44x^{2}y) + t^{2}(15x^{4}-18x^{2}y^{2}+4y^{4}-33x^{2}y\\
&\quad+9y^{3}+3x^{2}+9y^{2}) + tx(15x^{2}-11y^{2}) + 5x^{2}-y^{2}\big),\\
\a_1 &=\big(2t^3y^2 - 2t^3y + 2t^2xy - tx^2 + ty^2 + ty - x\big)\big(2y^4(y - 1)t^6+2xy^2(y^2 + 4y - 4)t^5\\
&\quad-2t^4(x^4y - x^2y^3 - 2x^4 - 2x^2y^2 - y^4 - 4y^3 + 4y^2)-4x(x^2y - y^3 - 2x^2 - 2y^2)t^3\\
&\quad+t^2(x^4 - x^2y^2 - 2x^2y + 2y^3 + 4x^2 + 4y^2)+2tx(x - y)(x + y)+x^2 - y^2\big),\\
\a_0&= t( t^8 y^8 - 2 t^8 y^7 + 2 t^7 x y^7 + t^8 y^6 + 2 t^7 x y^6 - 2 t^6 x^4 y^4 + 3 t^6 x^2 y^6 - 8 t^7 x y^5 + 6 t^6 x^4 y^3\\
&\quad + 2 t^6 x^2 y^5 + 2 t^6 y^7 - 2 t^5 x^5 y^3 + 2 t^5 x^3 y^5 + 4 t^7 x y^4 - 4 t^6 x^4 y^2 - 4 t^6 x^2 y^4 + 2 t^6 y^6\\
&\quad + 4 t^5 x^5 y^2 - 4 t^5 x^3 y^4 + 6 t^5 x y^6 + t^4 x^8 - 2 t^4 x^6 y^2 + t^4 x^4 y^4 - 8 t^6 y^5 + 12 t^5 x^3 y^3 \\
&\quad + 4 t^5 x y^5- 5 t^4 x^4 y^3 + 5 t^4 x^2 y^5 + 4 t^6 y^4 - 8 t^5 x^3 y^2 - 8 t^5 x y^4+ 11 t^4 x^4 y^2 - t^4 x^2y^4 \\
& \quad+ 3 t^4 y^6 + 4 t^3 x^7 - 7 t^3 x^5 y^2 + 3 t^3 x^3 y^4 + 6 t^4 x^2 y^3 + 2 t^4 y^5- 4 t^3 x^3 y^3 + 4 t^3 x y^5 \\
& \quad- 4 t^4 x^2 y^2 - 4 t^4 y^4 + 10 t^3 x^3 y^2 + 2 t^3 x y^4 + 6 t^2 x^6 - 9 t^2 x^4 y^2+ 3 t^2 x^2 y^4 - t^2 x^2 y^3 \\
&\quad + t^2 y^5 + 3 t^2 x^2 y^2 + t^2 y^4 + 4 t x^5 - 5 t x^3 y^2 + t x y^4 + x^4 - x^2 y^2).
\end{align*}

\vskip0.1in
The generating function $M=M(t,x,y)$ satisfies the algebraic equation
\begin{equation}\label{alg:gf2}
\b_6M^6+\b_5M^5+\b_4M^4+\b_3M^3+\b_2M^2+\b_1M+\b_0=0,
\end{equation}
where the coefficients are
\begin{align*}
\b_6&=t^5(x - y)(t^2xy - t^2x - t^2y + t^2 - 1)^3,\\
\b_5&={t}^{4} ({x}^{3}{t}^{3}+4{x}^{2}y{t}^{3}-5x{y}^{2}{t}
^{3}-7{x}^{2}{t}^{3}+2xy{t}^{3}+5{y}^{2}{t}^{3}+6x{t}^{3}-6y
{t}^{3}-{x}^{2}{t}^{2}+
\\&\quad2xy{t}^{2}-{y}^{2}{t}^{2}-{x}^{2}t+5xyt-5
xt+yt+x-5y )( xy{t}^{2}-x{t}^{2}-y{t}^{2}+{t}^{2}-1) ^{2},\\
\b_4&={t}^{3} ( xy{t}^{2}-x{t}^{2}-y{t}^{2}+{t}^{2}-1 ) 
 ( 5{x}^{4}y{t}^{6}+5{x}^{3}{y}^{2}{t}^{6}-10{x}^{2}{y}^{3}
{t}^{6}-5{x}^{4}{t}^{6}-30{x}^{3}y{t}^{6}\\
&\quad+15{x}^{2}{y}^{2}{t}^{6}+20x{y}^{3}{t}^{6}+25{x}^{3}{t}^{6}+30{x}^{2}y{t}^{6}-45x{y}^
{2}{t}^{6}-10{y}^{3}{t}^{6}-4{x}^{3}y{t}^{5}+8{x}^{2}{y}^{2}{t}^
{5}\\
&\quad-4x{y}^{3}{t}^{5}-35{x}^{2}{t}^{6}+10xy{t}^{6}+25{y}^{2}{t}
^{6}+4{x}^{3}{t}^{5}-4{x}^{2}y{t}^{5}-4x{y}^{2}{t}^{5}+4{y}^{3
}{t}^{5}+22{x}^{2}{y}^{2}{t}^{4}\\
&\quad-2x{y}^{3}{t}^{4}+15x{t}^{6}-15y{t}^{6}-4{x}^{2}{t}^{5}+8xy{t}^{5}-4{y}^{2}{t}^{5}-5{x}^{3}
{t}^{4}-44{x}^{2}y{t}^{4}-13x{y}^{2}{t}^{4}\\
&\quad+2{y}^{3}{t}^{4}+37
{x}^{2}{t}^{4}+32xy{t}^{4}-9{y}^{2}{t}^{4}-4{x}^{2}y{t}^{3}-12
x{y}^{2}{t}^{3}-32x{t}^{4}+12y{t}^{4}+8{x}^{2}{t}^{3}\\
&\quad+8xy{t}^{3}+16{y}^{2}{t}^{3}-6{x}^{2}y{t}^{2}+2x{y}^{2}{t}^{2}-4x{t}^{3}-12y{t}^{3}+2{x}^{2}{t}^{2}-10xy{t}^{2}-4{y}^{2}{t}^{2}\\
&\quad+11x{t}^{2}+5y{t}^{2}+16xyt-4xt+4yt-2x-10y),\\
\b_3&= 2{t}^{2}\big(5{t}^{9}(y-1)^{2}( x-1)^{3}( x-2+y)(x-y)-3{t}^{8}(y-1)^{2}( x-1)^{2}(x-y)^{2}+{t}^{7}( y-1)\\
&\quad(x-1)^{2}(6{x}^{2}y+17x{y}^{2}-3{y}^{3}-16{x}^{2}-36xy+12{y}^{2}+39x-19y) -{t}^{6}( y-1)( x-1) \\
&\quad (10{x}^{2}y+x{y}^{2}+{y}^{3}-16{x}^{2}+2xy-10{y}^{2}+9x+3y) -{t}^{5}(x-1) ({x}^{3}y+9{x}^{2}{y}^{2}-6x{y}^{3} \\
&\quad-{x}^{3}-2{x}^{2}y+28x{y}^{2}+3{y}^{3}-12{x}^{2}-53xy-3{y}^{2}+41x-5y ) +{t}^{4} ( 5{x}^{3}y+17{x}^{2}{y}^{2}\\
&\quad -2x{y}^{3}-5{x}^{3}-18{x}^{2}y-16x{y}^{2}+3{y}^{3}-2{x}^{2}+21xy-7{y}^{2}+3x+y )+{t}^{3} ( {x}^{3}y\\
&\quad-3{x}^{2}{y}^{2}-6x{y}^{2}+13{x}^{2}+16xy+9{y}^{2}-23x-7y) -{t}^{2}( 5{x}^{2}y-3x{y}^{2}+13xy+3{y}^{2}\\
&\quad +x-7y)+t({x}^2 +9xy + 3x + 3y)-x-5y\big),\\
\b_2&= t\big(5{t}^{10} ( y-1) ^{2}( x-1) ^{3}( 2x+y-3)( x-y) -4{t}^{9}( y-1) ^{2}( x-1) ^{2
} ( x-y) ^{2}+{t}^{8}\\
&\quad( y-1)( x-1) ^{2}( 26{x}^{2}y+20x{y}^{2}-6{y}^{3}-46{x}^{2}-
57xy+23{y}^{2}+77x-37y) -4{t}^{7} \\ 
&\quad ( y-1)( x-1)( 6{x}^{2}y-3x{y}^{2}+{y}^{3}-8{x}^{2}+2xy-2{y}^{2}+5x-y) -{t}^{6}( x-1) (6{x}^{3}y\\
&\quad+12{x}^{2}{y}^{2}-19x{y}^{3}+{y}^{4}-6{x}^{3}+28{x}^
{2}y+57x{y}^{2}+{y}^{3}-50{x}^{2}-124xy+14{y}^{2}-26y\\
&\quad+106x)+4{t}^{5}( 5{x}^{3}y+3{x}^{2}{y}^{2}-5{x}^{3}-4{x}^{2}y-8x{y}^{2}+{y}^{3}+7xy+{y}^{2}+3x-3y) \\
&\quad+{t}^{4}(-14{x}^{2}{y}^{2}+2x{y}^{3}+6{x}^{3}+3x{y}^{2}-5{y}
^{3}+40{x}^{2}+16xy+12{y}^{2}-66x+6y) \\
&\quad-4{t}^{3}
({x}^{2}y-3x{y}^{2}+4{x}^{2}+7xy+{y}^{2}-3x-3y) +{t}^{2}(5{x}^{2}y-x{y}^{2}+3{x}^{2}+10xy\\
&\quad-{y}^{2}+23x-7y ) +t ( -8xy-4x-4y) -x+5y\big),\\
\b_1&={t}^{11}( y-1) ^{2}( x-1) ^{3}
( 5x+y-6)( x-y)-{t}^{10}t( y-1)^{2}( x-1)^{2}( x-y)^{2}\\
& \quad+{t}^{9}( y-1)(x-1) ^{2}( 21{x}^{2}y+x{y}^{2}-2{y}^{3}-31{x}^{2}-22xy+13{y}^{2}+41x-21y)   \\
&\quad-{t}^{8}( y-1)( x-1)( 9{x}^{2}y-7x{y}^{2}+2{y}^{3}-11{x}^{2}+4xy-{y}^{2}+7x-3y) \\
 &\quad-{t}^{7}(x-1)(6{x}^{3}y-7{x}^{2}{y}^{2}-8x{y}^{3}+{y}^{4}-6{x}^{3}+48{x}^{2}y+28x{y}^{2}-6{y}^{3}-46{x}^{2}\\
&\quad-86xy+28{y}^{2}+76x-28y)+{t}^{6}(10{x}^{3}y-7{x}^{2}{y}^{2}+6x{y}^{3}-{y}^{4}-10{x}^{3}+8{x}^{2}y\\
&\quad-14x{y}^{2}-2{x}^{2}+2xy+8{y}^{2}+8x-8y )-{t}^{5} (4{x}^{3}y+7{x}^{2}{y}^{2}-3x{y}^{3}-10{x}^{3}+8{x}^{2}y\\
&\quad+x{y}^{2}+{y}^{3}-28{x}^{2}-22xy+6{y}^{2}+54x-18y )+{t}^{4}
(4{x}^{2}y+3x{y}^{2}+{y}^{3}-14{x}^{2} \\
&\quad-4xy-6{y}^{2}+6x+6y) +{t}^{3}({x}^{3}+2{x}^{2}y-3x{y}^{2}+5{x}^{2}+4xy-{y}^{2}+14x-6y) \\
&\quad-{t}^{2}({x}^{2}-{y}^{2}+8x) +t(xy-x+y)+x-y,\\
\b_0&=t\big( {t}^{10}( y-1)^{2}( x-1)^{4}( x-y) +{t}^{8}( y-1)( x-1) ^{2} ( 6{x}^{2}y-2x{y}^{2}-8{x
}^{2}-3xy+3{y}^{2}\\
&+9x-5y )-{t}^{6}( x-1)(2{x}^{3}y-7{x}^{2}{y}^{2}+x{y}^{3}-2{x}^{3}+22{x}^{2}y
+3x{y}^{2}-3{y}^{3}-16{x}^{2}\\
&-24xy+12{y}^{2}+22x-10y) -{t}^{4} (2{x}^{3}y-2{x}^{2}{y}^{2}-4{x}^{3}+14{x}^{2}y-x{y}^{2}-{y}^{3}-14{x}^{2}\\
&-14xy+8{y}^{2}+22x-10y
 ) +{t}^{2}( {x}^{3}-{x}^{2}y+2{x}^{2}-4xy+2{y}^{2}+9x-5y)-x+y\big). 
\end{align*}

\section*{Acknowledgement} 
We wish to thank Yang Li for useful discussions and Qiaolong Huang for his help on computing Gr\"obner basis in Maple. This work was supported
by the National Science Foundation of China grants  12322115, 12271301 and 12071440.



\end{document}